\documentclass[11pt, reqno]{amsart}
\usepackage[T2A]{fontenc}
\usepackage[english]{babel}
\usepackage{amsmath,amssymb,graphicx,enumerate,bbm, 
xcolor%,theorem
}
\usepackage{svg}

\usepackage[T2A,T1]{fontenc}

\makeatletter
\def\easycyrsymbol#1{\mathord{\mathchoice
		{\mbox{\fontsize\tf@size\z@\usefont{T2A}{\rmdefault}{m}{it}#1}}
		{\mbox{\fontsize\tf@size\z@\usefont{T2A}{\rmdefault}{m}{it}#1}}
		{\mbox{\fontsize\sf@size\z@\usefont{T2A}{\rmdefault}{m}{it}#1}}
		{\mbox{\fontsize\ssf@size\z@\usefont{T2A}{\rmdefault}{m}{it}#1}}
}}

\makeatletter
\def\easycyrsymbolup#1{\mathord{\mathchoice
		{\mbox{\fontsize\tf@size\z@\usefont{T2A}{\rmdefault}{m}{n}#1}}
		{\mbox{\fontsize\tf@size\z@\usefont{T2A}{\rmdefault}{m}{n}#1}}
		{\mbox{\fontsize\sf@size\z@\usefont{T2A}{\rmdefault}{m}{n}#1}}
		{\mbox{\fontsize\ssf@size\z@\usefont{T2A}{\rmdefault}{m}{n}#1}}
}}
\makeatother

\newcommand{\ShaUp}{\easycyrsymbolup{\CYRSH}}

\newcommand{\be}{\easycyrsymbol{\cyrb}}

\usepackage{amsfonts}
\usepackage{amstext}
\usepackage{amsthm}
\usepackage{MnSymbol}

\usepackage[%pagebackref,
hypertexnames=false, colorlinks, citecolor=black, linkcolor=blue, urlcolor=black]{hyperref}

	\setlength{\textwidth}{15.5cm}			  %
	\setlength{\textheight}{22cm}			  %
	\setlength{\topmargin}{-.5cm}			  %
	\setlength{\oddsidemargin}{6mm}			  %
	\setlength{\evensidemargin}{6mm}		  %
	\setlength{\abovedisplayskip}{3mm}		  %
	\setlength{\belowdisplayskip}{3mm}		  %
	\setlength{\abovedisplayshortskip}{0mm}	  %
	\setlength{\belowdisplayshortskip}{2mm}	  %
	\setlength{\baselineskip}{12pt}			  %
	\setlength{\normalbaselineskip}{12pt}	  %
	\normalbaselines						  %

\usepackage{mathrsfs}
\newtheorem{thm}{Theorem}[section]
\newtheorem{lm}[thm]{Lemma}

\newtheorem{proc}[thm]{Procedure}

\theoremstyle{definition}
\newtheorem{df}[thm]{Definition}
\newtheorem*{df*}{Definition}

\theoremstyle{remark}
\newtheorem{rem}[thm]{Remark}
\newtheorem*{rem*}{Remark}

\numberwithin{equation}{section}

\newcommand{\ci}[1]{{\vphantom{\rule[-0.65ex]{0ex}{0.35ex}}}_{#1}}
%\ci --- Capital index

\newcommand{\tp}[2]{\texorpdfstring{#1}{#2}}
\newcommand{\tup}[1]{\textup{#1}}

\newcommand{\ti}[1]{_{\scriptstyle \text{\rm #1}}}
%\ti --text index
\newcommand{\ut}[1]{^{\scriptstyle \text{\rm #1}}}
%\ut -- upper text index

\newcommand{\dd}{{\mathrm{d}}}

\newcommand{\sh}{\mathbb{S}}

\newcommand{\shst}{\sh_{0}}

\newcommand{\Sinv}{\cF}
\newcommand{\Start}{\operatorname{Start}}
\newcommand{\Startq}{\operatorname{Start}'}

\newcommand{\wh}{\widehat}

\newcommand{\cA}{\mathcal{A}}

\newcommand{\cD}{\mathscr{D}}
\newcommand{\cC}{\mathcal{C}}

\newcommand{\cP}{\mathcal{P}}
\newcommand{\cQ}{\mathcal{Q}}
\newcommand{\cR}{\mathcal{R}}
\newcommand{\cS}{\mathscr{S}}

\newcommand{\cE}{\mathcal{E}}
\newcommand{\cF}{\mathcal{F}}

\newcommand{\cH}{\mathcal{H}}
\newcommand{\cT}{\mathcal{T}}

\newcommand{\T}{\mathbb{T}}

\renewcommand{\f}{\varphi}
\newcommand{\vf}{\varphi}
\newcommand{\e}{\varepsilon}

\renewcommand{\C}{\mathbb{C}}
\newcommand{\R}{\mathbb{R}}
\newcommand{\Z}{\mathbb{Z}}
\newcommand{\N}{\mathbb{N}}

\newcommand{\F}{\mathbb{F}}

\newcommand{\bI}{\mathbf{I}}

\newcommand{\HT}{\cH}

\newcommand{\per}{\cP}
\newcommand{\qper}{\cQ\cP}

\newcommand{\REM}{\varrho}

\newcommand{\ch}{\operatorname{ch}}
\newcommand{\ran}{\operatorname{Ran}}

\newcommand{\1}{\mathbf{1}}

\makeatletter
\newcommand{\doublewidetilde}[1]{{%
		\mathpalette\double@widetilde{#1}%
}}
\newcommand{\double@widetilde}[2]{%
	\sbox\z@{$\m@th#1\widetilde{#2}$}%
	\ht\z@=.9\ht\z@
	\widetilde{\box\z@}%
}
\makeatother

\newcommand{\wt}{\widetilde}

\newcommand{\La}{\langle }
\newcommand{\Ra}{\rangle }

\newcommand{\ba}{\mathbf{a}}
\newcommand{\bb}{\mathbf{b}}

\newcommand{\bw}{\mathbf{w}}
\newcommand{\bff}{\mathbf{f}}
\newcommand{\bv}{\mathbf{v}}
\renewcommand{\be}{\mathbf{e}}

\newcommand{\bx}{\mathbf{x}}

\newcommand{\bA}{\mathbf{A}}
\newcommand{\bE}{\mathbf{E}}
\newcommand{\bW}{\mathbf{W}}

\newcommand{\bg}{\mathbf{g}}

\newcommand{\bcQ}{\boldsymbol{\cQ}}

\newcommand{\la}{\langle}
\newcommand{\ra}{\rangle}

\newcommand{\fdot}{\,\cdot\,}

\newcommand{\Hdy}{\cH\ut{dy}}

%\def\cyr{\fontencoding{OT2}\fontfamily{wncyr}\selectfont}
%\DeclareTextFontCommand{\textcyr}{\cyr}
%\newcommand{\sha}[0]{\textup{\textcyr{SH}}}
%\newcommand{\sha}[0]{\ensuremath{\mathbb{S}%H\!\!S}}

%%%%%%%%%%%%%%%%%%%%%%%%%%%%

\newenvironment{entry}
{\begin{list}{X}%
  {%
      \setlength{\labelwidth}{55pt}%
      \setlength{\leftmargin}{\labelwidth}%\labelsep}%
      \addtolength{\leftmargin}{\labelsep}%
   }%
}%
{\end{list}}      

\renewcommand{\labelenumi}{(\roman{enumi})}

%%%%%%%%%%%%%%%%%%%%%%%%%%%%

\newcounter{vremennyj}

\newcommand\cond[1]{\setcounter{vremennyj}{\theenumi}\setcounter{enumi}{#1}\labelenumi\setcounter{enumi}{\thevremennyj}}

%%%%%%%%%%%%%%%%%%%%%%%
\begin{document}
\title[$3/2>1$]%
{
The matrix $A_2$ conjecture fails, i.e.~$3/2>1$.
}
\author[K.~Domelevo]{Komla Domelevo}
\address{Department of Mathematics, University of W\"{u}rzburg, W\"{u}rzburg, Germany}
\email{komla.domelevo@uni-wuerzburg.de \textrm{(K. Domelevo)}}
\author[S.~Petermichl]{Stefanie Petermichl}
\address{Department of Mathematics, University of W\"{u}rzburg, W\"{u}rzburg, Germany}
\email{stefanie.petermichl@uni-wuerzburg.de \textrm{(S. Petermichl)}}
\thanks{SP is partially supported by the Alexander von Humboldt foundation}
\author[S.~Treil]{Sergei Treil}
\address{Department of Mathematics, Brown University, USA}
\email{treil@math.brown.edu \textrm{(S. Treil)}}
\thanks{ST is partially supported by the NSF grant DMS-2154321}
\author[A.~Volberg]{Alexander Volberg}
\thanks{AV  %and VV are 
is partially supported  by the Oberwolfach Institute for Mathematics, Germany; AV is also supported 
by the NSF grants DMS-1900286, DMS-2154402}
\address{Department of Mathematics, Michigan Sate University, East Lansing, MI. 48823}
\email{volberg@math.msu.edu \textrm{(A. Volberg)}}
\makeatletter
\@namedef{subjclassname@2010}{
  \textup{2010} Mathematics Subject Classification}
\makeatother
\subjclass[2010]{42B20, 42B35, 47A30}
% 42B	Harmonic analysis in several variables
% 42B20	Singular and oscillatory integrals (Calder?on-Zygmund, etc.)
% 42B35	Function spaces arising in harmonic analysis
% 47A	General theory of linear operators
% 47A30	Norms (inequalities, more than one norm, etc.)
%{30E20, 47B37, 47B40, 30D55.}
%
% 30D55	$H^p$-classes (1980-2009)
% 30E20	Integration, integrals of Cauchy type, integral representations of analytic functions
%
% 47B   	Special classes of linear operators
% 47B37	Operators on special spaces (weighted shifts, operators on sequence spaces, etc.)
% 47B40	Spectral operators, decomposable operators, well-bounded operators, etc.
\keywords{matrix weights,
   martingale transform, Hilbert transform}
\begin{abstract}
We show that the famous matrix $\bA_2$ conjecture is false: the norm of the Hilbert Transform in 
the  
space $L^2(W)$ with matrix weight $W$ is estimated below by $C[W]\ci{\bA_2}^{3/2}$. 
\end{abstract}
\maketitle

\setcounter{tocdepth}{1}
\tableofcontents
\setcounter{tocdepth}{3}

\section*{Notation}
%\addcontentsline{toc}{section}{Notation}

\begin{entry}

\item[$|I|$] denotes the $1$-dimensional Lebesgue measure of $I\subset \R$; 

\item[$\cD$] a dyadic lattice. %We consider all ``translations'' of the standard dyadic lattice;

\item[$\cD(I)$] for $I\in\cD$, $\cD(I)= \{J\in\cD: J\subset I\}$;  

\item[$\ch^k(I)$] dyadic descendants of order $k$ of the interval $I$;

\item[$I^0$] the unit interval $I^0=[0,1)$

\item[$\cD_k$] Abbreviation for $\cD_k=\ch^k(I^0)$ %{\color{red} Definition added since used below}

\item[$\cS$] $\cS=\bigcup_{n>0}\cS_n$ is the collection of stopping intervals;  

\item[$\cS_n$] the $n$th generation of stopping intervals; 

\item[$\la f\ra\ci I$] average of the function $f$ over $I$, $\la f \ra \ci I:= |I|^{-1} \int_I 
f(x) \dd x$;  

\item[$\bE\ci I$, $\bE_k$] averaging operators, $\bE\ci I f:= \La f \Ra\ci I \1\ci I$, $\bE_k:= 
\sum_{I\in\cD_k} \bE\ci I$; 

\item[$\Delta\ci I$, $\Delta_k$] martingale differences, $\Delta\ci I := -\bE\ci I + 
\sum_{J\in\ch(I)} \bE\ci{\!J}$, $\Delta_k = \bE_{k+1} - \bE_k = \sum_{I\in\cD_k} \Delta\ci I$;

\item[$\Delta\ci I^r$, $\Delta_k^r$] martingale differences of order $r$, $\Delta\ci I^r := -\bE\ci 
I + 
\sum_{J\in\ch^r(I)} \bE\ci{\!J}$, $\Delta_k^r = \bE_{k+r} - \bE_k = \sum_{I\in\cD_k} \Delta\ci 
I^r$; 
for $r=1$ we skip the index;

\item[$\wh h\ci I$] $L^\infty$ normalized Haar function, $\wh h\ci I = \1\ci{I_+} 
-\1\ci{I_-}$; 

\item[$h\ci I$] $L^2$ normalized Haar function, $h\ci I = |I|^{-1/2}\, \wh h\ci{I}$;  

\item[$\psi\ci{I,J}$] the unique orientation preserving affine bijection from the interval $J$ to 
the interval $I$;

\item[$x\underset{\delta_0<}{\asymp}y$] there exist $0<c <C<\infty$ such that 
$cx\le y \le Cx$   
for all 
sufficiently small $\delta_0$; %Alternatively, one can write $x=O_{\delta_0<}(y)$. 

\item[$x\underset{\delta_0<}{\lesssim}y$] there exists $C<\infty$ such that $ y \le Cx$   for all 
sufficiently small $\delta_0$;

\item[$x=O\ci{\delta_0<}(y)$] An alternative notation for the $x\underset{\delta_0<}{\lesssim}y$;

\item[$x\underset{\delta_0<}{\sim}y$]  for any $\e>0$ we have $(1+\e)^{-1} x\le y \le (1+\e)x$ for 
all sufficiently small $\delta_0$;  \\
in the above $4$ items the same constants work for all values of the variables involved and for all 
sufficiently small $\delta_0$. %\medskip

\end{entry}

In this paper we will use linear algebra notation, so for a (column) vector $\ba\in\F^d$ ($\F$ 
being either 
$\R$ or $\C$) we use $\ba^*$ for 
the functional $\bx\mapsto (\bx, \ba)\ci{\F^d}$, i.e.~for the Hermitian transpose of $\ba$, which 
is a row vector. Thus $\ba\ba^*$ denotes the rank one operator 
$\bx\mapsto (\bx, \ba)\ci{\F^d}\ba$.

\section{Definitions and main result}
\label{s:intro}

Recall that a ($d$-dimensional) matrix weight on $\R$  
is a locally integrable function on $\R$ 
with values in the set of positive-semidefinite $d\times d$ matrices%
\footnote{There are of course similar definitions on the unit circle $\mathbb{T}$ or 
$\mathbb{R}^N$.}. 
The weighted space $L^2(W)$ 
is defined as the space of all measurable functions $f:\R \to \F^d$, (here $\F=\R$, or $\F=\C$) 
for which
\[
\| f\|\ci{L^2(W)}^2:=\int (W(x)f(x), f(x))\ci{\F^d} \dd x <\infty\,;
\]
here $(\cdot, \cdot)\ci{\F^d}$ means the standard inner product in $\F^d$. 

A matrix weight $W$ is said to satisfy the matrix $\mathbf A_2$ condition (write $W\in\mathbf 
A_2$) if 
\begin{align}\label{e:A2}
[W]\ci{\bf A_2} := \sup_I \left\| \la W\ra\ci I^{1/2} \la W^{-1}\ra\ci I^{1/2} \right\|^2  < 
\infty\, ,
\end{align}
where $I$ runs over all intervals.
The quantity $[W]\ci{\mathbf A_2}$ is called the \emph{$\mathbf A_2$ characteristic of the weight} 
$W$.  In the scalar case, when $W$ is a scalar weight $w$, this coincides with the classical $A_2$ 
characteristic $[w]\ci{A_2}$.

Let $\cH$ denote the Hilbert transform, 
\begin{align*}
\HT f(s) =\frac1\pi \text{p.v.}\int_\R \frac{f(t)}{s-t} \dd t .
\end{align*}
In this paper, we disprove the famous \emph{matrix $\bA_2$ conjecture}, which  stated that for any 
$\bA_2$ weight $W$, $[W]\ci{\bA_2}\le \bcQ$
\begin{align*}
\| \cH g \|\ci{L^2(W)}\le C \bcQ \|g\|\ci{L^2(W)}  \qquad \forall g\in L^2(W). 
\end{align*}

More precisely, our main result is:

\begin{thm}\label{MainTheorem1}
There exists a constant $c>0$ such that for all sufficiently large 
$\bcQ$ there exist a $2\times 2$ matrix weight $W=W\ci{\!\!\bcQ}$ \tup{(}with real entries\tup{)}, 
$[W]\ci{\bA_2}\le \bcQ$ and 
a function $g\in L^2(W)$, $g:\R\to \R^2$, $g\ne 0$ such that
\begin{align*}
\| \cH g \|\ci{L^2(W)} \ge c   \bcQ^{3/2} \|g\|\ci{L^2(W)}. 
\end{align*}
\end{thm}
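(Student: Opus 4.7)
The plan is to proceed in three stages: (a) reduce from the Hilbert transform to a dyadic model, (b) construct a specific matrix weight on a stopping tree, and (c) test against an explicit function.

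First I would exploit the well-known representation of $\cH$ as an average, over translations and dilations of dyadic grids, of (cancellative) dyadic shift operators; a standard polarization then shows it is enough to produce, for \emph{some} dyadic lattice $\cD$ and some choice of symbols $\sigma\ci I\in\{\pm 1\}$, a matrix weight $W$ with $[W]\ci{\bA_2}\le \bcQ$ on which the martingale transform $f\mapsto \sum_I \sigma\ci I \Delta\ci I f$ has norm at least $c\bcQ^{3/2}$. This reduction costs nothing at the level of the exponent and converts the problem into a purely dyadic one.

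Next I would build $W$ piecewise constant on a stopping tree $\cS=\bigcup_n \cS_n$. On every $S\in \cS_n$ I take $W|_S=R_{\theta_S}^*\operatorname{diag}(\bcQ^{1/2},\bcQ^{-1/2})R_{\theta_S}$, i.e.~an ellipse of fixed aspect ratio $\bcQ^{1/2}$ whose major axis points in a direction $\theta_S$ that is incremented by a small angle $\delta_0\asymp \bcQ^{-1/2}$ when passing from a parent stopping cell to each of its child stopping cells. Two things must be verified: (i) $[W]\ci{\bA_2}\lesssim \bcQ$, which follows from the smallness of $\delta_0$, since then local averages of $W$ and $W^{-1}$ come from nearly coaxial ellipses and only the squared eccentricity contributes; (ii) the rotation budget permits $N\asymp \bcQ$ stopping generations before $\theta_S$ accumulates to a full $O(1)$.

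Then I would pick a test function $g$ whose martingale differences at each stopping level point in the \emph{short} eigendirection of the parent stopping cell but, viewed from the child, are rotated by $\delta_0$ off the \emph{long} eigendirection. With symbols $\sigma\ci I$ chosen so that contributions of successive generations add coherently, each generation contributes of order $\delta_0\cdot \bcQ^{1/2}$ to the output $L^2(W)$ norm (the eccentricity boost being released by the infinitesimal rotation). Summing coherently over $N\asymp \bcQ$ generations gives output norm $\gtrsim \bcQ\cdot \delta_0\cdot \bcQ^{1/2}=\bcQ$ while the input $L^2(W)$ norm stays of order $\bcQ^{-1/2}$, yielding the desired ratio $\bcQ^{3/2}$.

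The hard part is the \emph{simultaneous} control of the $\bA_2$ characteristic and of the norm lower bound: individually either is easy, but the exponent $3/2$ forces the local non-commutativity of the averages of $W$ and $W^{-1}$ to be just barely consistent with $[W]\ci{\bA_2}\le \bcQ$ while still permitting $\bcQ$ generations of coherent accumulation. A second delicate point is suppressing cancellation across generations: this forces careful calibration of the stopping rule (so that a positive proportion of cells survive at each level to carry the signal) and of the symbols $\sigma\ci I$. The whole construction lives in the small-$\delta_0$ regime featured in the notation list ($\underset{\delta_0<}{\asymp}$, $\underset{\delta_0<}{\sim}$, etc.), which is exactly the regime in which the $\bcQ^{3/2}$ amplification is extracted.
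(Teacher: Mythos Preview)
Your reduction in step (a) is the wrong direction. The representation of $\cH$ as an average of dyadic shifts yields \emph{upper} bounds for $\cH$ from upper bounds for shifts; a lower bound on a single shift over a single grid says nothing about the average, which can be much smaller by cancellation. The paper stresses exactly this point (Section~\ref{s:counter}): ``getting upper estimates for Hilbert Transform from the estimates of the Haar Shift is more immediate\ldots The case of lower bounds is much more involved.'' The passage from the dyadic lower bound to $\cH$ is done instead by \emph{iterated periodization} (after Bourgain) followed by \emph{quasi-periodization/remodeling} (after Nazarov and Kakaroumpas--Treil): one shows that $\bigl(\cH\,\per^{\vec N}\bff,\per^{\vec N}\bg\bigr)\to\bigl(\Hdy\bff,\bg\bigr)$ for a specific dyadic operator $\Hdy=c_1(\sh-\sh^*)+c_2\shst$ forced by the construction, not chosen freely. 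There is no ``standard polarization'' that replaces this.

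Two further gaps. First, a piecewise constant weight on a stopping tree with jumps of size $\bcQ^{1/2}$ across dyadic boundaries is only in $\bA_2\ut{dy}$, not in $\bA_2$; your verification (i) at best controls the dyadic characteristic. The paper's remodeling is precisely what upgrades $\bA_2\ut{dy}$ to $\bA_2$ (Sections~\ref{s:remodeling} and the tracking in the final section), and this step is not free. Second, your weight uses \emph{only} rotation with fixed eigenvalues $\bcQ^{\pm1/2}$; the paper's mechanism requires the additional \emph{stretching} step (Section~\ref{s:stretch}), which multiplies the large eigenvalue by $r=2-1/\bcQ$ at each generation. It is this growth that produces the factor $r^{k-n}$ in the inner sum and turns $\sum_{k>n}2^{n-k}$ into $\sum_{k>n}(r/2)^{k-n}=2\bcQ-1$, giving the extra $\bcQ$ in $\|\Pi f\|\ci{L^2(W)}^2$. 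Pure rotation by a fixed $\delta_0\asymp\bcQ^{-1/2}$ does not generate this geometric gain, and in fact (see \eqref{e:s}) rotation alone degrades the $\bA_2$ constant at each step; the stretching is also what restores it. So both the amplification mechanism and the transfer to $\cH$ in your outline need to be replaced.
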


In fact, by picking a sufficiently small $c$ we can  state it for all $\bcQ\ge 1$. By a simple 
reduction, we can state it for all dimensions $d\ge 2 $ of matrices.

Theorem \ref{MainTheorem1} also shows that the upper bound 
\begin{align*}
\| T \|\ci{L^2(W)\to L^2(W)} \le C\ci{T} \bcQ^{3/2}
\end{align*}
obtained in \cite{NaPeTrVo} for general Calder\'on--Zygmund operators is sharp.

\section{Introduction}
In the scalar case, the $A_2$ conjecture, i.e.~the estimate $\|T\|\ci{L^2(w) \to L^2(w)} \le 
C\ci{T} 
[w]\ci{A_2} $ turned out to be true, so it is now known as the $A_2$ Theorem. It was a long 
standing open 
problem with a fascinating history that we describe briefly below.

\subsection{Motivation}

An important motivation for   sharp estimates of the Hilbert transform in $L^2(W)$ with 
matrix weight $W$ comes from probability theory, more precisely from the theory of stationary 
Gaussian processes. For  Gaussian processes all information is encoded 
in the means and correlations, so  the study of multivariate 
 stationary Gaussian processes %of dimension $d$ 
(say with discrete time)  is reduced to the 
study of the subspaces $z^n \C^d$, $n\in\Z$, in the Hilbert space $L^2(\bW)$, where $\bW$ 
is the spectral measure of the process. For a multivariate process of dimension $d$ the spectral 
measure is a $d\times d$ matrix-valued measure. 

 The regularity properties of stationary stochastic 
processes in terms of their spectral measures $\bW$ is a classical area that attracted the 
attention of many mathematicians, and a huge bibliography can be found in \cite{Ib},
\cite{MW}, \cite{WM1}, \cite{WM2}. In the case of scalar processes many different types of 
regularities 
were studied and very detailed results were found in many papers, to name just a few \cite{HS}, 
\cite{HSa}, \cite{HMW}, \cite{Pell}, \cite{PeKh}. 

One of the questions about regularity was the question when the angle between past and future of 
the process is positive, which reduces to the question when the Riesz Projection $P_+$, or, 
equivalently the Hilbert Transform $\HT$ is bounded in the weighted space $L^2(\bW)$. 
This question goes back to N.~Wiener and P.~Masani, and was  the 
main motivation behind the famous Helson--Szeg\"{o} Theorem, which solved the problem in the 
one-dimensional case.

It was well known for a long time that even weaker regularity conditions than the positivity of the 
angle imply that the measure $\bW$ is absolutely continuous $\dd \bW = W \dd x$, so everything is 
reduced to estimates with matrix weights. 

Another motivation comes from the theory of Toeplitz operators (Riemann Hilbert Problem). A 
Toeplitz operator $T\ci{F}$ in the vector-valued Hardy space $H^2(\C^d)$ is defined as 
\begin{align*}
T\ci{F}f = P_+(Ff), \qquad f\in H^2(\C^d), 
\end{align*}
where $P_+$ is the Riesz Projection, i.e.~the orthogonal projection from $L^2$ onto the Hardy space 
$H^2$. 
The $d\times d$ matrix-valued function $F\in L^\infty(M_{d\times d})$ is called the symbol of the 
Toeplitz operator. It is well known that the operator $T\ci{F}$ is invertible if and only if it can
be factorized as
\begin{equation}
\label{e:factor}
F= G_1^* G_2, \qquad G_{1,2}^{\pm 1}\in H^2(M_{d\times d})   
\end{equation}
(the functions $G_{1,2}$ and their inverses are in the matrix-valued Hardy class $H^2$),  and the 
formal inverse 
\begin{align}\label{e:FormInv}
f\mapsto G_{2}^{-1} P_+((G_1^{-1})^* f)
\end{align}
of $T\ci{F}$ is bounded.  It is not hard to see that the operator \eqref{e:FormInv} is bounded if 
and only if the weighted estimate 
\begin{align}\label{e:WNI 01}
\| P_+ f \|\ci{L^2(V)} \le C \|f\|\ci{L^2(W)}\qquad \forall f\in L^2(W)
\end{align}
with weights $W=G_1 G_1^*$, $V=(G_2^{-1})^* G_2^{-1}$ holds, and the norm of the operator is the 
best constant $C$ in \eqref{e:WNI 01}. This looks like a two weight inequality, but in reality it 
reduces to the one weight case. Namely, the invertibility of $T\ci{F}$ implies that $F$ is 
invertible in $L^\infty$, and therefore it is not hard to check that the weights $V$ and $W$ are 
equivalent,  i.e.\ 
\begin{align*}
A^{-1} W \le V \le A W, \qquad \text{for some }A\in(0, \infty); 
\end{align*}
the inequality is understood as a matrix inequality. Therefore at the cost of the constant $A$, 
\eqref{e:WNI 01} is equivalent to the one weight estimate 
\begin{align}\label{e:WNI 02}
\| P_+ f \|\ci{L^2(W)} \le C \|f\|\ci{L^2(W)}\qquad \forall f\in L^2(W), 
\end{align}
i.e.~to the boundedness of $P_+$ in the weighted space $L^2(W)$. On the real line the Riesz 
Projection $P_+$ and the Hilbert Transform are related as $\HT = -i P_+ + i (\bI -P_+)$, so $P_+$ 
is bounded in $L^2(W)$ if and only if $\HT$ is, and the norms are equivalent in the sense of 
two-sided estimates.

\subsection{Some scalar history}

As we already mentioned above, the first solution  for scalar weights is given  by the famous 
Helson--Szeg\"{o} Theorem \cite{HS}. Later in \cite{HMW} a real variable characterization of such 
weights was obtained, and the Muckenhoupt $A_2$ condition came to prominence. 

Finding quantitative estimates of the weighted norms of singular integral operators in terms of 
$A_p$ characteristics was a natural problem to investigate. The interest in sharp estimates peaked 
after the paper \cite{PV}, where the sharp weighted estimate $\lesssim\bcQ$  for the 
Beurling--Ahlfors 
Transform was obtained and used to solve a long standing problem about regularity of the Beltrami 
PDE at the critical exponent (Conjecture 12.9 in \cite{IwM} in dimension $2$). Note that for other 
exponents \emph{any} weighted estimate works, see \cite{AIS},  but for the critical exponent the 
sharp estimate $\lesssim\bcQ$ is required. The proof of this sharp estimate in \cite{PV} featured a 
novel  \emph{implicit Bellman function} approach using the heat equation.

For the Hilbert Transform the scalar $A_2$ conjecture was resolved in \cite{P}. 
The main part of the proof was purely dyadic and provided the upper bound $\lesssim\bcQ$  for  the 
so-called Haar Shift $ \ShaUp $ 
\cite{Pe}. The estimate for the Hilbert Transform follows, since the Hilbert Transform is (up to a 
multiplicative constant) the average of the Haar Shifts over all translations and dilations of the 
dyadic lattice:  an idea that had great impact on the study of the Hilbert Transform.

The area generated much interest at one of the forefronts of analysis, which resulted in 
extraordinary development that goes far beyond the interest in weights, see e.g. the following 
articles and the literature cited therein: \cite{LPR}, \cite{Hy}, \cite{HyPTV}, \cite{LN}, 
\cite{La}, \cite{CR}, \cite{Le}.  

For general Calder\'on-Zygmund operators the $A_2$ conjecture was settled in \cite{Hy} via 
representation of such operators as the average of generalized  Haar Shifts and and sharp estimates 
of such shifts.

Later, as a result of all this interest, the highly influential and flexible sparse
domination technique appeared in pioneering works begun by A. Lerner that found its greatest impact
in the development of the pointwise sparse domination. Here such a vast literature exists by now
that we cite only a few early papers: \cite{Le}, \cite{LN}, \cite{CR}, \cite{La}.  Again, these
sequels included completely new, brilliant ideas and received much recognition.

\subsection{Vector history}

The matrix $\bA_2$ 
condition was introduced  %by the third author 
in \cite{Treil-OTAA-89}, where it was conjectured that it is necessary and sufficient for the 
boundedness of the Hilbert Transform. This conjecture was proved in \cite{TV}, which started the 
development of the theory of matrix-valued weights. 

After much development in the scalar case, with a number of proofs  giving the linear bound, 
interest spiked in the matrix case, but the matrix $\mathbf A_2$ conjecture remained an intriguing 
question for years. The conjectured linear bound in the matrix case was a puzzling, long standing 
open question, resisting the abundance of new techniques that had appeared in the scalar theory 
over the last twenty years. Still, there was a firm belief in scalar estimates holding in the 
matrix case.  For example for the Christ--Goldberg maximal function \cite{CG} this was confirmed 
early on in \cite{IsKwPo}. The belief in a linear estimate for $\cH$ was strengthened as the matrix 
weighted Carleson Embedding Theorem was finally proved \cite{CT} circumventing the 
non-commutativity of matrix multiplication with an elegant argument. Later, the dyadic square 
function with matrix weight was shown to have a linear dependence \cite{HyPeVo}, \cite{Tr}. 
All these results hinted that the Hilbert Transform and other Calder\'on--Zygmund operators 
should have an upper bound of the form $\lesssim \bcQ$. 

Turning to the Hilbert Transform, the constant one obtains by combing ideas from the scalar 
estimate \cite{PP} with \cite{TV} and tracking certain constants in \cite{TV}, was improved 
in \cite{BPW} to an estimate of the form $\lesssim \bcQ^{3/2}\log\bcQ$.  
The logarithmic term  was then dropped in  \cite{NaPeTrVo} using the so-called convex body sparse 
domination, but the $3/2$ power remained. Many alternative proofs were known to us, all with power 
$3/2$.

There were however some hints that the answer in not that simple. 
An interesting mile stone was the failure of the convex body Carleson Embedding Theorem as well as 
failure of the boundedness of the convex body Maximal Function as demonstrated in \cite{NaPeSkTr}. 
In addition, while the matrix weighted Carleson Embedding Theorem holds \cite{CT}, its bilinear 
version  (a crucial ingredient in some sharp scalar proofs) fails so miserably that a trivial 
counterexample shows not only that it fails, but that the $\mathbf A_2$ characteristic is not the 
right quantity at all \cite{DoPeSk}.

But on the other hand,   it was shown in \cite{NaPeTrVo} that a simple sparse operator, based on a 
sparse family where each 
sparse interval has at most one sparse descendant, had the expected linear bound in the matrix 
case. Dominating singular integral operators using sparse families does not generally yield 
such a simple collection. However, analyzing the construction of dominating sparse operators and 
using the $T(1)$-Theorem for matrix weights \cite{BCTW2019} one can show that for weights with one 
%(correctly understood)  
singularity the estimate is reduced to estimating simple sparse operators. 
This means that any counterexample should be 
quite complicated, and cannot be anything like a weight with one or finitely many singularities.

\subsection{The idea of our counterexample}
\label{s:counter}

In this paper, we first show the desired lower bound for some dyadic operators in terms of the 
\emph{dyadic} $\mathbf A_2$ condition, see \eqref{e:A2 01} for the definition.

The weight we construct is necessarily quite complicated, and we construct it using the martingale 
approach, i.e.~by tracking the averages of the weight $W$ and its inverse $W^{-1}$.  

In our top down approach, we apply subtle rotations of the eigenvectors of the averages in one step 
(which is only possible in the vector case) and in the 
next step we disbalance the weight, which is the action also giving the sharp lower bound in the 
scalar case.  The rotation, which is only possible in the vector case, adds some extra blow up. 

For the constructed weight we then prove the estimate  from below 
$\gtrsim([W]\ci{\bA_2}\ut{dy})^{3/2}$  for several dyadic operators relevant to our task.  

We start with a special sparse paraproduct, which will be the main building block for all the 
estimates. We also show that the classical Haar Shift $\ShaUp$ has the desired bound. 
We then prove the lower bound for the special dyadic operator $\Hdy$ which in our situation serves 
as a dyadic model for the Hilbert Transform.

It is well known that the average of the Haar Shift gives the Hilbert Transform -- as such, getting 
upper estimates for Hilbert Transform from the estimates of the Haar Shift is more  immediate, 
intuitive and obvious; this idea has been used for years.

The case of lower bounds is much more involved.   
In our passage from the dyadic operators $\Hdy$ to the Hilbert Transform we 
take inspiration from Bourgain \cite{Bo} in the use of separating frequencies (the 
periodizations). A modification of his construction by considering the second order martingale 
differences allowed us to obtain the lower bounds for 
the Hilbert Transform $\HT$ (with a modified weight) from  the lower bounds for the dyadic operator 
$\Hdy$ . The choice of the dyadic operator $\Hdy$ is forced upon us by the construction, we do not 
have any freedom here.

But the construction does not stop here. The weight we constructed for the dyadic operator and its 
periodization are in the dyadic $\bA_2$ 
class $\bA_2\ut{dy}$; however they both  have large jumps and do not satisfy the classical matrix
$\bA_2$ condition.

To overcome this difficulty we borrow from the technique of quasi-periodization,  developed by 
F.~Nazarov \cite{Nazarov} 
in his counterexample to the Sarason 
conjecture.  This technique allows us to get the classical $\bA_2$ condition, but only in a two 
weight situation. However in the present paper we are in the one weight 
situation, and thus have even less freedom. So, in the final step we use a modification of the 
so-called \emph{iterated remodeling} technique, developed in \cite{KakTre21} for the scalar case.

\section{Preliminaries}
\label{s:prelim}

\subsection{Some definitions}
Our construction starts in the dyadic martingale setting, so let us remind the reader of the main 
definitions. A \emph{dyadic interval} in this paper is an interval $I$ of the form
\begin{align*}
2^k\Bigl([0,1) + j\Bigr), \qquad j,k\in \Z. 
\end{align*}
For a dyadic interval $I$ we denote by $\cD(I)$ all dyadic subintervals of $I$ (including $I$). 

We will use the notation $I^0:=[0,1)$, and we often abbreviate $\cD:=\cD(I^0)$. For a 
dyadic  interval $I$ we denote by $\ch^k(I)$, $k\in\N$ its descendants of $k$th generation, 
i.e.~the the set of all dyadic subintervals of $I$ of length $2^{-k}|I|$. We will also use the
notation $\cD_k:=\ch^k(I^0)$. 

Finally, we denote by $I_+$ and $I_-$ the right and left child of $I$ respectively.

We say that a weight $W$ on $I^0$ satisfies the \emph{dyadic} matrix $\bA_2$ condition, and write 
$W\in \bA_2\ut{dy}$ if 
\begin{align}\label{e:A2 01}
[W]\ci{\bA_2}\ut{dy} := \sup_{I\in\cD} 
\left\| \la W\ra\ci I^{1/2} \la W^{-1}\ra\ci I^{1/2} \right\|^2  < \infty\,;
\end{align}
the difference from the classical matrix $\bA_2$ condition is that in the dyadic $\bA_2\ut{dy}$ 
condition we take the supremum only over dyadic intervals, not over all intervals as in 
%the $\bA_2$ condition 
\eqref{e:A2}. 

\subsection{Properties of \tp{$\bA_2$}{A2} weights}
For Hermitian matrices $A$, $B$ the notation $A\le B$ means that the matrix $B-A$ is positive 
semidefinite (denoted $B-A\ge0$). 
\begin{lm}
\label{l:A2_LMI_prelim}
Let $\bv$, $\bw$ be $d\times d$ positive definite matrices. Then the inequality 
$\|\bv^{1/2}\bw^{1/2}\|^2\le \bcQ$ is equivalent to 
\begin{align}\label{e:A2 to LMI}
\bv\le \bcQ \bw^{-1}, \qquad\text{or, equivalently}\qquad \bw\le \bcQ\bv^{-1}. 
\end{align}
\end{lm}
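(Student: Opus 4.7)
The plan is to reduce everything to a statement about eigenvalues of a single positive semidefinite matrix, using the standard identity $\|A\|^2=\|A^*A\|$ together with the fact that congruence by an invertible matrix preserves positive semidefiniteness.

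First, I would note that for any matrix $A$ one has $\|A\|^2 = \|A^*A\|$, so applied to $A=\bv^{1/2}\bw^{1/2}$ (recall $\bv^{1/2}$, $\bw^{1/2}$ are Hermitian) this yields
\[
\|\bv^{1/2}\bw^{1/2}\|^2 = \|\bw^{1/2}\bv\bw^{1/2}\|.
\]
The right-hand side is the operator norm of a positive semidefinite matrix, and so it equals the largest eigenvalue. Consequently $\|\bv^{1/2}\bw^{1/2}\|^2\le \bcQ$ is equivalent to
\[
\bw^{1/2}\bv\bw^{1/2} \le \bcQ\, \bI.
\]

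Next, I would use that for an invertible Hermitian matrix $B$ and Hermitian matrices $X,Y$, one has $X\le Y \iff BXB \le BYB$; this is immediate from $Y-X\ge 0 \iff B(Y-X)B\ge 0$, which holds because congruence preserves positive semidefiniteness and is reversible via $B^{-1}$. Applying this with $B=\bw^{-1/2}$ to the displayed inequality gives
\[
\bv \le \bcQ\, \bw^{-1},
\]
which is the first equivalent form in \eqref{e:A2 to LMI}.

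Finally, for the second equivalent form I would exploit the symmetry of the starting quantity. Since $\bv^{1/2}\bw^{1/2}$ and $\bw^{1/2}\bv^{1/2}$ have the same nonzero singular values, their operator norms coincide, so $\|\bv^{1/2}\bw^{1/2}\|^2 = \|\bw^{1/2}\bv^{1/2}\|^2$. Repeating the argument above with the roles of $\bv$ and $\bw$ swapped then gives $\bw\le \bcQ\,\bv^{-1}$. There is no real obstacle here; the one point to be careful about is the justification that congruence by an invertible Hermitian matrix is an order isomorphism on the cone of Hermitian matrices, which is a two-line check.
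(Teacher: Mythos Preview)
Your proof is correct and follows essentially the same route as the paper: compute $\|\bv^{1/2}\bw^{1/2}\|^2=\|\bw^{1/2}\bv\bw^{1/2}\|$, rewrite the norm bound as the matrix inequality $\bw^{1/2}\bv\bw^{1/2}\le\bcQ\bI$, and conjugate by $\bw^{-1/2}$. The only cosmetic difference is that for the second form the paper simply declares it ``trivially equivalent'' to the first, whereas you re-run the argument with $\bv$ and $\bw$ swapped via the norm symmetry; both justifications are fine.
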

\begin{proof}
Taking the Hermitian square of $\bv^{1/2}\bw^{1/2}$ and recalling that $\|A\|^2 =\|A^*A\|$ we see 
that the condition $\|\bv^{1/2}\bw^{1/2}\|^2\le \bcQ$ can be rewritten as 
\begin{align*}
\left\| \bw^{1/2} \bv\bw^{1/2}\right\| & \le \bcQ
\intertext{or, equivalently}
\bw^{1/2} \bv\bw^{1/2} &\le \bcQ \bI. 
\end{align*}
Left and right multiplying the last inequality by $\bw^{-1/2}$ we get the first inequality in 
\eqref{e:A2 to LMI}; the second inequality in \eqref{e:A2 to LMI} is trivially equivalent to the 
first one. 
\end{proof}

\begin{lm}
\label{l:A2_LMI}
Let $W$ be a $d\times d$ matrix weight. Then
\begin{align}\label{e:weight LMI}
\la W \ra\ci I^{-1} \le \la W^{-1} \ra\ci I, \quad \text{or equivalently}\qquad 
\la W^{-1} \ra\ci I^{-1} \le \la W \ra\ci I. 
\end{align}
Moreover
 $\|\la W \ra\ci I^{1/2} \la W^{-1} \ra\ci I^{1/2} 
\|^2\le \bcQ$ if and only if 
\begin{align}\label{e:A2_LMI 01}
 \la W^{-1} \ra\ci I \le \bcQ \la W \ra\ci I^{-1}, \quad \text{or equivalently}\qquad 
  \la W \ra\ci I \le \bcQ \la W^{-1} \ra\ci I^{-1}. 
\end{align}
\end{lm}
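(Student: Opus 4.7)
The plan is to derive the second assertion directly from Lemma~\ref{l:A2_LMI_prelim} and to prove the first assertion \eqref{e:weight LMI} by a two-step Cauchy--Schwarz argument (first pointwise in $\F^d$, then in the integral).

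For the second part, I would apply Lemma~\ref{l:A2_LMI_prelim} with $\bv=\la W\ra\ci I$ and $\bw=\la W^{-1}\ra\ci I$: the hypothesis $\|\bv^{1/2}\bw^{1/2}\|^2\le\bcQ$ is then exactly the $\bA_2$-type bound, and the conclusion $\bv\le\bcQ\bw^{-1}$ becomes the first inequality in \eqref{e:A2_LMI 01}. The equivalent second form in \eqref{e:A2_LMI 01} is obtained by swapping the roles of $\bv$ and $\bw$ (equivalently, by replacing $W$ with $W^{-1}$, which preserves the $\bA_2$-type quantity since $\|A\|=\|A^*\|$).

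For \eqref{e:weight LMI}, I would fix $\bx,\mathbf{y}\in\F^d$, reduce to the case $W(t)$ invertible a.e.\ by the standard regularization $W+\e\bI,\ \e\downarrow 0$, and write
\[
(\bx,\mathbf{y})\ci{\F^d} \;=\; \frac{1}{|I|}\int_I \bigl(W(t)^{-1/2}\bx,\,W(t)^{1/2}\mathbf{y}\bigr)\ci{\F^d}\,\dd t.
\]
Applying Cauchy--Schwarz in $\F^d$ inside the integral and then Cauchy--Schwarz for the $\dd t/|I|$ integral yields
\[
|(\bx,\mathbf{y})\ci{\F^d}|^2 \;\le\; (\la W^{-1}\ra\ci I \bx,\bx)\ci{\F^d}\,(\la W\ra\ci I \mathbf{y},\mathbf{y})\ci{\F^d}.
\]
Specializing $\mathbf{y}=\la W\ra\ci I^{-1}\bx$ (the average is positive definite, hence invertible) gives $(\la W\ra\ci I^{-1}\bx,\bx)^2 \le (\la W^{-1}\ra\ci I\bx,\bx)(\la W\ra\ci I^{-1}\bx,\bx)$, and dividing by the positive factor $(\la W\ra\ci I^{-1}\bx,\bx)$ yields $\la W\ra\ci I^{-1}\le \la W^{-1}\ra\ci I$ for every $\bx$. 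The equivalent second form in \eqref{e:weight LMI} follows by replacing $W$ with $W^{-1}$.

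I do not anticipate any genuine obstacle here: the content is the operator Jensen inequality for the operator-convex function $t\mapsto t^{-1}$, and everything reduces to bilinear Cauchy--Schwarz once one remembers to factor $1=W^{-1/2}W^{1/2}$. The only mild technicality is the regularization needed when $W(t)$ is merely positive semidefinite; but \eqref{e:weight LMI} is vacuous unless $W^{-1}$ is locally integrable, so $W(t)$ is invertible a.e., and the limit $\e\downarrow 0$ is routine.
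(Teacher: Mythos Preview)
Your proposal is correct and follows essentially the same approach as the paper: the second assertion is deduced directly from Lemma~\ref{l:A2_LMI_prelim}, and the first is obtained via the factorization $\bI=W^{1/2}W^{-1/2}$ combined with Cauchy--Schwarz, then cancelling the common factor $(\la W\ra\ci I^{-1}\bx,\bx)^{1/2}$. The only cosmetic difference is that the paper specializes $\mathbf{y}=\la W\ra\ci I^{-1}\bx$ from the outset rather than first proving the bilinear inequality $|(\bx,\mathbf{y})|^2\le(\la W^{-1}\ra\ci I\bx,\bx)(\la W\ra\ci I\mathbf{y},\mathbf{y})$.
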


\begin{proof}
Inequalities \eqref{e:A2_LMI 01} follow immediately from Lemma \ref{l:A2_LMI_prelim}. As for the 
inequalities \eqref{e:weight LMI}, they are just Property 1 of the $\cA_{p,q}$ weights in 
\cite[s.~2.5]{NaTr-Hunt}, see also \cite[Lemma 1.4]{LaTr2007}; in both cases $p=q=2$. 

To save the reader a trip to a library and the hurdle of translating from a different 
(mathematical) 
language, we present a proof here: it is the same proof as in \cite{NaTr-Hunt,LaTr2007} written in 
different notation.   Namely, we get for arbitrary $\be\in\F^d$
\begin{align*}
\left\La \left(\La W\Ra\ci{I}^{-1}\be, \be\right)\ci{\F^d}   \right\Ra_{I} &=
\left\La \left(W^{1/2}\La W\Ra\ci{I}^{-1}\be, W^{-1/2}\be\right)\ci{\F^d}   \right\Ra_{I} \\
&\le 
\left\La \left(\La W\Ra\ci{I}^{-1} W\La W\Ra\ci{I}^{-1}\be, \be\right)\ci{\F^d}   
\right\Ra_{I}^{1/2}
\left\La \left(W^{-1}\be, \be\right)\ci{\F^d}   \right\Ra_{I}^{1/2} \\
&=
\left(\La W\Ra\ci{I}^{-1} \be, \be\right)\ci{\F^d}^{1/2} 
\left(\La W^{-1}\Ra\ci{I} \be, \be\right)\ci{\F^d}^{1/2} . 
\end{align*}
Dividing both sides by $\left(\La W\Ra\ci{I}^{-1} \be, \be\right)\ci{\F^d}^{1/2}$ and squaring, we 
get the first inequality in \eqref{e:weight LMI}; the second inequality  is trivially equivalent to 
the first one. 
\end{proof}

\begin{rem}\label{r:matrix rescaling}
It is trivial and well-known that the matrix rescaling of the weight $W\mapsto AWA$, where 
$A=A^*>0$ is a constant matrix, does not change the $\bA_2$ characteristic, 
\[
[W]\ci{\bA_2} = [AWA]\ci{\bA_2}. 
\] 
It is also well-known and easy to see that for an operator  of the form $T\otimes \bI$, where $T$ 
is 
an operator acting on scalar-valued functions, its norms  in $L^2(W)$ and in $L^2(AWA) $ coincide. 
\end{rem}

%$\cH\ut{dy}\ti{Tr}$
\begin{lm}
\label{l:stop}
Let $\bw=\bw^*>0$ and $\bv=\bv^*>0$ be $d\times d$ matrices satisfying 
\begin{align}
\label{e:LMI 000}
\bv^{-1} \le \bw \le \bcQ\bv^{-1}. 
\end{align}
Then, given an interval $I$, there exists an $\bA_2$ weight $W$ on $I$, constant on the children of 
$I$, such that 
\begin{align*}
\la W \ra\ci{I} = \bw, \qquad \la  W^{-1}\ra\ci{I} =\bv, 
\end{align*}
and  $[W]\ci{\bA_2}\le \bcQ$. 
\end{lm}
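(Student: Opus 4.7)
The plan is to reduce to the case $\bw = \bI$ via the matrix rescaling in Remark \ref{r:matrix rescaling}, and then to write down the two values of $W$ on the children $I_\pm$ explicitly using functional calculus of a single matrix. Introduce $A := \bw^{1/2} \bv \bw^{1/2}$; by Lemma \ref{l:A2_LMI_prelim} the hypothesis $\bv^{-1} \le \bw \le \bcQ \bv^{-1}$ is equivalent to $\bI \le A \le \bcQ\, \bI$. Seek $W$ in the form $W = \bw^{1/2} \wt W \bw^{1/2}$, with $\wt W$ constant on $I_\pm$ taking positive definite values $\wt \bw_\pm$. Then the requirements $\la W\ra\ci I = \bw$ and $\la W^{-1}\ra\ci I = \bv$ translate into the symmetric system
\begin{equation*}
\tfrac{1}{2}(\wt\bw_+ + \wt\bw_-) = \bI, \qquad \tfrac{1}{2}(\wt\bw_+^{-1} + \wt\bw_-^{-1}) = A.
\end{equation*}

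Next I would simply define
\begin{equation*}
\wt\bw_\pm := \bI \pm \sqrt{\bI - A^{-1}}.
\end{equation*}
Since $A \ge \bI$, the operand of the square root is positive semidefinite, so $\wt\bw_\pm$ are well-defined Hermitian matrices; since $A \le \bcQ\,\bI$ gives $A^{-1} \ge \bcQ^{-1}\bI$, the operator norm of $\sqrt{\bI - A^{-1}}$ is at most $\sqrt{1 - \bcQ^{-1}} < 1$, so both $\wt\bw_\pm$ are strictly positive definite. The identity $\wt\bw_+ + \wt\bw_- = 2\bI$ is immediate. Because $\wt\bw_+$ and $\wt\bw_-$ are both functions of $A$, they commute, and $\wt\bw_+ \wt\bw_- = \bI - (\bI - A^{-1}) = A^{-1}$. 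Consequently
\begin{equation*}
\wt\bw_+^{-1} + \wt\bw_-^{-1} = (\wt\bw_+ \wt\bw_-)^{-1}(\wt\bw_+ + \wt\bw_-) = A\cdot 2\bI = 2A,
\end{equation*}
as required. Undoing the rescaling, the weight $W$ takes the value $\bw^{1/2} \wt\bw_\pm \bw^{1/2}$ on $I_\pm$.

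It remains to verify the $\bA_2$ bound. For any $J \in \cD(I)$ with $J \subsetneq I$ the weight $W$ is constant on $J$, so $\|\la W\ra\ci J^{1/2} \la W^{-1}\ra\ci J^{1/2}\|^2 = 1$. The only other dyadic interval to check is $J = I$ itself, where by Lemma \ref{l:A2_LMI_prelim}
\begin{equation*}
\|\la W\ra\ci I^{1/2} \la W^{-1}\ra\ci I^{1/2}\|^2 = \|\bw^{1/2}\bv^{1/2}\|^2 = \|A\| \le \bcQ.
\end{equation*}

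The main obstacle I anticipate is the temptation to diagonalize $\bw$ and $\bv$ simultaneously and solve the resulting scalar system eigenvalue-by-eigenvalue; no such common diagonalization exists in general. The rescaling step sidesteps this by reducing the problem to the functional calculus of the single Hermitian matrix $A$, where the scalar formula $x_\pm = 1 \pm \sqrt{1 - 1/\lambda}$ (the roots of $t^2 - 2t + 1/\lambda$) lifts verbatim.
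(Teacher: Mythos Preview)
Your construction is essentially identical to the paper's: after the rescaling, your $\sqrt{\bI - A^{-1}}$ is exactly the paper's matrix $\Delta$ (in the paper's normalization $\bw=\bI$ one has $A=\bv$ and $\Delta^2 = \bI - \bv^{-1}$), and the verification of the two average conditions is the same computation.

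There is, however, a genuine gap in your $\bA_2$ verification. The lemma asserts $[W]\ci{\bA_2}\le\bcQ$, i.e.\ the supremum in \eqref{e:A2} over \emph{all} subintervals $J\subset I$, not merely over $J\in\cD(I)$. You have only checked dyadic $J$: for $J\subsetneq I$ dyadic the weight is constant on $J$, and for $J=I$ you recover the hypothesis. That establishes $[W]\ci{\bA_2}\ut{dy}\le\bcQ$ but says nothing about a non-dyadic $J$ straddling the midpoint of $I$. The paper handles this by writing, for such $J$ with $\alpha:=|J\cap I_+|/|J|$, the averages $\La W\Ra\ci{\!J}=\bI+(2\alpha-1)\Delta=:A_\alpha$ and $\La W^{-1}\Ra\ci{\!J}=B_\alpha^{1/2}\bv B_\alpha^{1/2}$ with $B_\alpha=\bI-(2\alpha-1)\Delta$, then using the scalar inequality $1-x\le(1+x)^{-1}$ (which lifts to commuting matrices) together with $\bv\le\bcQ\bI$ to get $\La W^{-1}\Ra\ci{\!J}\le\bcQ B_\alpha\le\bcQ A_\alpha^{-1}=\bcQ\La W\Ra\ci{\!J}^{-1}$.

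The paper does note (Remark~\ref{r:need only dyadic}) that only the dyadic bound is needed for the counterexample construction, so your argument suffices for that purpose; but as a proof of the lemma as stated, the non-dyadic case must be supplied.
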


\begin{rem}\label{r:need only dyadic}
In our construction we only need the fact that the resulting weight $W$ is a \emph{dyadic} $\bA_2$ 
weight satisfying $[W]\ci{\bA_2}\ut{dy} \le \bcQ$, so the reader can skip the part of the proof 
below dealing with the estimate $[W]\ci{\bA_2}\le \bcQ$. However this estimate is of independent 
interest, so we present its proof here. 
\end{rem}

\begin{proof}[Proof of Lemma \ref{l:stop}] 
As it was mentioned above in Remark \ref{r:matrix rescaling} the $\bA_2$ characteristic 
$[W]\ci{\bA_2}$ is invariant under the matrix rescaling $W\mapsto AWA$, $A=A^*>0$. 
The condition \eqref{e:LMI 000} is also invariant under the matrix rescaling $\bw\mapsto A \bw A$, 
$\bv \mapsto A^{-1} \bv A^{-1}$, $A=A^*>0$,  so without loss of generality we may assume $\bw=\bI$. 

Define $W:=(\bI+\Delta)\1\ci{I_+}+  (\bI-\Delta)\1\ci{I_-}$, where $\Delta =\Delta^*\ge 0$ is a 
constant matrix to be found. Through the 
use of  Neumann series or simple algebra one checks that 
$$
\left(\La W \Ra\ci{I_+} + \La W \Ra\ci{I_-} \right)/2= \left( (\bI+\Delta)^{-1}+(\bI-\Delta)^{-1} 
\right)/2=(\bI-\Delta^2)^{-1}.
$$
The above average should be equal to $\bv$, so  
$\Delta^2 = \bI-\bv^{-1} \ge 0$ (because $\bv^{-1}\le \bw$). Taking the unique positive root gives 
us the desired $\Delta$. 

Since the weight $W$ is constant on $I_\pm$, it is trivial that $[W]\ci{\bA_2}\ut{dy}\le\bcQ$.  The 
reader only interested in the construction of the counterexample can stop here, see Remark 
\ref{r:need only dyadic}.  

But for an inquisitive reader  let us show that in fact $[W]\ci{\bA_2}\le\bcQ$. Take an interval 
$J\subset I$, and let 
$J_1:=J\cap I_+$, $\alpha:= |J_1|/|J|$. %$J_2:=J\cap I_-$. We can write
Then, still assuming $\bw=\bI$ and recalling that $\bv=(\bI-\Delta^2)^{-1}$, we get 
\begin{align*}
\La W \Ra\ci{\!J} &= \alpha (\bI+\Delta) + (1-\alpha) (\bI-\Delta) = \bI + (2\alpha-1)\Delta 
=:A_\alpha 
\\
\La W^{-1} \Ra\ci{\!J} &= \frac{\alpha}{\bI+\Delta} + \frac{1-\alpha}{\bI-\Delta} 
%|J|^{-1} \left(|J_1| \bw_+^{-1} + |J_2| \bw_-^{-1}   \right)
%= \frac{\alpha(\bI -\Delta) + (1-\alpha)(\bI+\Delta)}{\bI -\Delta^2}
=\frac{\bI - (2\alpha-1)\Delta}{\bI -\Delta^2} 
=: \frac{B_\alpha}{\bI -\Delta^2} 
\\
&=B_\alpha^{1/2} \left( \bI - \Delta^2 \right)^{-1} B_\alpha^{1/2}
=B_\alpha^{1/2} \bv B_\alpha^{1/2}. 
\end{align*}
The estimate $\bw\le\bcQ \bv^{-1}$ is equivalent to $\bv\le\bcQ\bw^{-1} = \bcQ\bI$, so 
$\|\bv^{1/2}\|\le \bcQ^{1/2}$. The trivial scalar inequality 
\[
1-x\le \frac{1}{1+x}
\]
implies the matrix version
\begin{align*}
B_\alpha=\bI - (2\alpha-1)\Delta \le (\bI + (2\alpha-1)\Delta)^{-1} =A_\alpha^{-1}, 
\end{align*}
and since $\|\bv^{1/2}\|\le\bcQ^{1/2}$ we conclude that 
\begin{align*}
\La W^{-1} \Ra\ci{\!J} = B_\alpha^{1/2} \bv B_\alpha^{1/2} \le \bcQ B_\alpha \le \bcQ A_\alpha^{-1} 
= \bcQ \La W \Ra\ci{\!J}^{-1}. 
\end{align*}
Since $J$ is an arbitrary interval, Lemma \ref{l:A2_LMI} implies that $[W]\ci{\bA_2}\le\bcQ$. 
\end{proof}

\subsection{Representation of weights as martingales} 

It is a common place in dyadic harmonic analysis to treat functions as martingales, i.e.~not to 
track  functions  themselves, but their averages over dyadic intervals. Namely, if we define 
the averaging operators (conditional expectations) $\bE\ci{I}$, $\bE_k$, 
\begin{align*}
\bE\ci{I}\vf =\La \vf \Ra\ci{I}\1\ci{I}, \qquad \bE_k:= \sum_{I\in\cD_k} \bE\ci{I}, 
\end{align*}
then given a function $\f\in L^1(I^0)$ we can see that the sequence $(\vf_n)_{n\ge0}$, 
$\vf_n:=\bE_n \vf$ is a martingale, meaning that  $\bE_n\vf_{n+1} =\vf_n$. 

From a na\"{i}ve point of view  one just tracks the average $\vf\ci{I}:=\La \vf \Ra\ci{I}$; the 
martingale property simply means that $\vf\ci{I}=(\vf\ci{I_+}+\vf\ci{I_-})/2$ for all $I\in\cD$. 
Of course not all martingales come from functions, the so-called \emph{uniform integrability} is 
the necessary and sufficient condition. But in our construction all martingales will stabilize 
after 
finitely many steps, so they will be trivially given by averages of some functions.

The following martingale difference decomposition is well known,
\[
\vf_n
= \sum_{I\in\cD_n}  \bE\ci{I} f
= \bE_0 \vf + \sum_{k=0}^{n-1}\sum_{I\in\cD_k} \Delta\ci{I}\vf
= \la \vf \ra_{I^0} + \sum_{k=0}^{n-1}\sum_{I\in\cD_k} (\vf,h\ci{I}) h_I;
\]
here
\begin{align*}
\Delta\ci{I}\vf := \bE\ci{I}\vf -\sum_{J\in\cH(I)} \bE\ci{\!J}\vf = (\vf, h\ci{I})h\ci{I},  
\end{align*}
and $h\ci{I}:= |I|^{-1/2} \left(\1\ci{I_+}-\1\ci{I_-}\right)$ is the normalized Haar function. 

In our construction of the (dyadic) $\bA_2$ weights we will track the averages $\bw\ci{I}=\La 
W\Ra\ci{I}$, $\bv\ci{I}=\La V\Ra\ci{I}$, satisfying the matrix inequalities
\begin{align}
\label{e:LMI 001}
\bv\ci{I}^{-1} \le \bw\ci{I} \le \bcQ\bv\ci{I}^{-1}. 
\end{align}
The martingales will terminate (i.e.~stabilize) after finitely many steps, so trivially they are 
obtained from weights $W$, $V$. Adding one more step and using Lemma \ref{l:stop} there, we can 
guarantee that $V=W^{-1}$. The fact that on each step the inequalities  \eqref{e:LMI 001} are 
satisfied implies that the resulting weight is a \emph{dyadic} $\bA_2$ weight and that 
$[W]\ci{\bA_2}\ut{dy}\le \bcQ$. 

\begin{rem}
We do not claim here that the weight $W$ is an $\bA_2$ weight, we only claim that it is a 
\emph{dyadic} $\bA_2$ weight. The constructed dyadic weight $W$ gives the lower bound 
$\gtrsim\bcQ^{3/2}$ for 
the norms of some dyadic operators in $L^2(W)$. 

To get the lower bound for the Hilbert Transform $\cH$ we perform the so-called \emph{iterated 
remodeling} on the weight $W$, to get a new weight $\wt W$. This new weight will satisfy the 
$\bA_2$ 
condition with $[W]\ci{\bA_2}\le 16 \bcQ$, and $\|\cH\|\ci{L^2(W)\to L^2(W)}\gtrsim \bcQ^{3/2}$. 
\end{rem}

\section{Construction of the dyadic weight}
\label{s:constr}

\subsection{Families of weights}
In this section we will construct a family of weighs $W=W\ci{\bcQ, \delta_0}$ depending 
on the dyadic $\bA_2$ characteristic $\bcQ=: [W]\ci{\bA_2}\ut{dy}$ and some auxiliary small 
parameter $\delta_0:= q (\beta_0/\alpha_0)^{1/2} = q E_0^{-1/2}$, $q=0.1$,  where 
$E_0:=\alpha_0/\beta_0$ is 
the initial eccentricity of the average $\La 
W^{-1}\Ra\ci{I^0}$, see \eqref{e:av01}.  We will then show in Sections \ref{s:est-para}, 
\ref{s:Lower Haar shifts}  that for 
any sufficiently large 
$\bcQ$ we will have 
for all sufficiently small $\delta_0$  the desired lower bound  $c \bcQ^{3/2}$ for relevant 
dyadic operators. 

For typographical reasons we will usually omit the indices $\bcQ$ and $\delta_0$, 
but the reader should keep in mind that $W$ is not one weight but the whole family. 

Let us introduce some notation, simplifying the write-up. The symbols $\lesssim$ and $\gtrsim$ mean
the inequalities with some absolute constants, and $\asymp$ means the two sided estimate with
absolute constants. 
We will use the notation
$\underset{\delta_0<}{\asymp}$, $\underset{\delta_0<}{\lesssim}$, $\underset{\delta_0<}{\gtrsim}$
to indicate 
that the estimates hold for sufficiently small $\delta_0$, and that  the same implied constants 
work for \emph{all} variables involved and for 
\emph{all} sufficiently small $\delta_0$. 
Similarly we will use the notation $O\ci{\delta_0<}(X)$ for a quantity whose absolute value  is 
bounded by $CX$ with an absolute constant $C$ for all sufficiently small $\delta_0$ (and for all 
variables involved). 

We 
will use $\underset{\delta_0<}{\le}$, $\underset{\delta_0<}{\ge}$ to indicate that the inequalities 
hold for all sufficiently small $\delta_0$ (with the constants explicitly presented).   

And finally, the symbol $\underset{\delta_0<}{\sim}$ means that the ratio of right and left hand 
side is as close to $1$ as we want, if $\delta_0$ is sufficiently small.

\subsection{Starting point}
We will start with the interval $I=I^0$, and on this interval the averages $\la 
W^{-1}\ra\ci{I}=\bv\ci{I^0}$ and $\la W\ra\ci{I^0}=\bw\ci{I^0}$ will be diagonal in the same  
orthonormal basis $\ba\ci{I^0}$, $\bb\ci{I^0}$, 
\begin{align}
\label{e:av01}
\la W^{-1}\ra\ci{I^0}=\bv\ci{I^0} =\alpha_0 \ba\ci{I^0}\ba\ci{I^0}^* + \beta_0 \bb\ci{I^0} 
\bb\ci{I^0}^*, \qquad 
\la W\ra\ci{I^0}=\bw\ci{I^0} = \beta_0^\# \ba\ci{I^0}\ba\ci{I^0}^* + \alpha_0^\# 
\bb\ci{I^0} \bb\ci{I^0}^*. 
\end{align}
We will require that $\alpha_0 \beta_0^\# = \beta_0\alpha_0^\#=\bcQ$, and 
 that $\alpha_0 \gg\beta_0$. 
 
\begin{rem}\label{r:rescaling}
The assumption that $\alpha_0 \gg\beta_0$ might look superfluous because of the rescaling 
invariance of the problem.  
Indeed, for the rescaling of a weight $W$, i.e.~for the weight $AWA$, where 
$A=A^*>0$ is a constant matrix, we have
\begin{align*}
[AWA]\ci{\bA_2} = [W]\ci{\bA_2}, 
\end{align*}
i.e.~the rescaling preserves the $\bA_2$ characteristics. 
 
It is also easy to see that for operators of the form $T\otimes \bI$, where $T$ is 
an operator acting on scalar-valued functions, its norms  in $L^2(W)$ and in $L^2(AWA) $ coincide.

Thus,  one can always rescale the weight and assume without loss of generality that 
$\alpha_0=\beta_0$. 

However, choosing a large 
eccentricity simplifies formulas by allowing us to easily identify terms we can ignore, and that is 
the  reason we assume that the initial eccentricity  
$E_0:=\alpha_0/\beta_0$ is as large as we want.  
\end{rem}

The initial eccentricity  $E_0 = \alpha_0/\beta_0 = \alpha_0^\# / \beta_0^\#$, or equivalently  
the quantity $\delta_0 : = q (\beta_0/\alpha_0)^{1/2}= q 
E_0^{-1/2}$ with $q=0.1$, will be the parameter of the construction.  We  will choose $\delta_0$ as 
small as we want, so $E_0$ will be  as large as we want. 

\subsubsection{About initial eigenvalues}
Now, what about the choice of the initial eigenvalues $\alpha_0$, $\beta_0$? Since in our 
construction we will 
not be taking the limit as $\delta_0\to0$ (equivalently as $E_0\to\infty$), but just take a 
sufficiently small $\delta_0$ that gives us the desired bound, we do not need to worry much about 
it. For each value of $\delta_0$ we just pick $\alpha_0$, $\beta_0$ satisfying 
$q\cdot(\beta_0/\alpha_0)^{1/2} = \delta_0$; the eigenvalues $\alpha_0^\#$, $\beta_0^\#$ are 
uniquely determined by the relations $\alpha_0 \beta_0^\# = \beta_0\alpha_0^\#=\bcQ$. 

The reader uncomfortable with such freedom can always assume that the eigenvalue $\alpha_0$ is 
fixed. 
Alternatively, given $\delta_0$ one can choose eigenvalues symmetrically, 
\begin{align*}
\alpha_0=\alpha_0^\# = q \delta_0^{-1}\bcQ^{1/2}, \qquad \beta_0=\beta_0^\# = q^{-1} \delta_0   
\bcQ^{1/2}  .
\end{align*}

\subsection{Rotations}
\label{s:rot}
On the next step of the construction we perform what we call the \emph{rotation}. 

Suppose we are given a positive self-adjoint operator $\bv$ in $\R^2$, 
\begin{align*}
\bv = \alpha \ba\ba^* +\beta \bb\bb^*, 
\end{align*}
where $\ba$, $\bb$ is an orthonormal basis in $\R^2$ and $\alpha, \beta>0$. 
Recall that we agreed that $\alpha \gg \beta$. 

Let us find matrices $\bv_+$ and $\bv_-$ that have a similar form to $\bv$ but in slightly 
rotated bases, such that $\bv= (\bv_+ +\bv_-)/2$. 

Namely, define 
\begin{align*}
\bv_\pm = \wt\alpha \ba_\pm \ba_\pm^* + \wt\beta \bb_\pm \bb_\pm^*, 
\end{align*}
where
\begin{align}
\label{e:ab_pm}
\ba_\pm = (1+\delta^2)^{-1/2} (\ba \pm \delta \bb), \qquad \bb_\pm = (1+\delta^2)^{-1/2} (\bb \mp 
\delta \ba), 
\end{align}
with appropriately chosen small parameter $\delta$.  Note, that here we just rotated the basis 
$\ba$, $\bb$ through the angle 
$\pm\theta$,  $\theta = \arctan\delta$. 

If $\delta$ is fixed, the eigenvalues $\wt\alpha$ and $\wt\beta$ (note that they are the same for 
$\bv_+$ and $\bv_-$) are uniquely defined by the martingale relations $\bv= (\bv_+ +\bv_-)/2$. 
Namely, noticing that
\begin{align*}
\frac{\ba_+\ba_+^* + \ba_-\ba_-^*}{2} &= \frac{\ba \ba^*}{1+\delta^2} + \frac{\delta^2\bb 
\bb^*}{1+\delta^2},
\intertext{and}
\frac{\bb_+\bb_+^* + \bb_-\bb_-^*}{2} &= \frac{\delta^2\ba \ba^*}{1+\delta^2} + \frac{\bb 
\bb^*}{1+\delta^2},
\end{align*}
%%
%an easy linear algebra gives us that 
we conclude that 
\begin{align*}
\alpha = \frac{\wt \alpha + \delta^2 \wt \beta}{1+\delta^2}, \qquad 
\beta = \frac{ \delta^2\wt\alpha +  \wt \beta}{1+\delta^2}. 
\end{align*}
Solving for $\wt\alpha$, $\wt\beta$ we get 
\begin{align}
\label{e:wt alpha beta}
\wt\alpha = \frac{\alpha - \delta^2\beta}{1-\delta^2}, \qquad 
\wt\beta = \frac{\beta - \delta^2\alpha}{1-\delta^2} . 
\end{align}
Note that $\wt\beta>0$ if and only if $\delta^2<\beta/\alpha$, and if we follow our assumption that 
$\alpha>\beta$, then in this case $\wt\alpha>\wt\beta$, so both eigenvalues are positive. It is 
also clear that in this case $\wt\alpha>\alpha$. 

In what follows we will set $\delta= q (\beta/\alpha)^{1/2}$, where $q$ is a 
small parameter (say 
$q=0.1$)  to be 
fixed later at the beginning of the construction, before all other parameters.%

\subsubsection{Applying the rotations}
In what follows we will apply the rotations every other generation, for example for some intervals 
in the even generations. On each such \emph{rotation} interval $I$ we will have  
\begin{align*}
\la W^{-1} \ra\ci{I} = \bv\ci{I} = \alpha\ci{I} \ba\ci{I} \ba\ci{I}^* + \beta\ci{I} \bb\ci{I} 
\bb\ci{I}^*, \qquad 
\la W \ra\ci{I} = \bw\ci{I} = \beta\ci{I}^\# \ba\ci{I} \ba\ci{I}^* + \alpha\ci{I}^\# 
\bb\ci{I} \bb\ci{I}^*
\end{align*}
where $\alpha\ci{I}\gg \beta\ci{I}$ and 
\begin{align*}
\alpha\ci{I}\beta\ci{I}^\# =\bcQ = \alpha\ci{I}^\#\beta\ci{I},
\end{align*}
hence $\alpha\ci{I}^\#/\beta\ci{I}^\# = \alpha\ci I /\beta\ci I \gg1$. 

In our construction the eigenvalues $\alpha\ci{I}$, $\alpha\ci{I}^\#$, $\beta\ci I$, $\beta\ci 
I^\#$ will depend only upon the generation of the rotation interval $I$ (i.e.~only on its size). 

Now we apply the rotation operation with the parameter $\delta=\delta\ci I = 
q(\beta\ci 
I/\alpha\ci I)^{1/2}=q(\beta\ci I^\#/\alpha\ci I^\#)^{1/2}$  to obtain the averages of $W^{-1}$ and 
$W$ on 
the 
children $I_+$, $I_-$ of $I$, 
\begin{align*}
\la W^{-1}\ra\ci{I_\pm} = \bv\ci{I_\pm} = \wt \alpha\ci I \ba\ci{I_\pm} \ba\ci{I_\pm}^* + 
\wt\beta\ci I \bb\ci{I_\pm} \bb\ci{I_\pm}^*, \qquad
\la W\ra\ci{I_\pm} = \bw\ci{I_\pm} = \wt \beta\ci I^\# \ba\ci{I_\pm} \ba\ci{I_\pm}^* + 
\wt\alpha\ci I^\# \bb\ci{I_\pm} \bb\ci{I_\pm}^*, 
\end{align*}
where the eigenvectors $\ba\ci{I_\pm}$, $\bb\ci{I_\pm}$ are given by \eqref{e:ab_pm}
\begin{align*}
\ba\ci{I_\pm} = (1+\delta^2)^{-1/2} (\ba\ci I \pm \delta \bb\ci I), \qquad \bb\ci{I_\pm} = 
(1+\delta^2)^{-1/2} (\bb\ci I \mp 
\delta \ba\ci I), 
\end{align*}
and the eigenvalues change according to \eqref{e:wt alpha beta}
\begin{align}
\label{e:wt alpha beta 01}
\wt\alpha\ci I = \frac{\alpha\ci I - \delta^2\beta\ci I}{1-\delta^2}, \quad 
\wt\beta\ci I = \frac{\beta\ci I - \delta^2\alpha\ci I}{1-\delta^2} , \qquad
\wt\alpha\ci I^\# = \frac{\alpha\ci I^\#  - \delta^2\beta\ci I^\#}{1-\delta^2}, \quad 
\wt\beta\ci I^\# = \frac{\beta\ci I^\# - \delta^2\alpha\ci I^\#}{1-\delta^2}. 
\end{align}
Note that the $\bA_2$ constant of the averages on $I_\pm$ will change a little, 
\begin{align}\label{e:s}
\wt\alpha\ci I \wt\beta\ci I^\# = \wt\alpha\ci I^\# \wt\beta\ci I = s\ci I \bcQ, 
\end{align}
where
\begin{align*}
s\ci I = \frac{(1-q^2)(1- q^2\beta\ci I^2/\alpha\ci I^2 )}
{(1- q^2\beta\ci I/\alpha\ci I)^2}  \underset{\delta_0<}{\sim}  1-q^2;
\end{align*}
the symbol $\underset{\delta<}{\sim}$ means that the ratio of right and left hand side is as 
close to $1$ as we want, if $\delta$ is sufficiently small. 

If $\alpha\ci I \gg \beta\ci I$, which will be the case in our construction, $s\ci I$ is close to 
$1-q^2$, so repeating this rotation we would get a  degradation (decreasing) of the $\bA_2$ 
constant 
as intervals get smaller.   

Fortunately, it is possible to return the $\bA_2$ constant to $\bcQ$ on the next step, 
\emph{stretching}. While the main purpose of the stretching is increasing the eigenvalues 
$\alpha\ci I$, $\alpha\ci I^\#$ as much as possible, the possibility of correcting the $\bA_2$ 
constant 
is a nice bonus, making the construction possible.     

Note that in our construction $\alpha\ci I>\beta\ci I$ so $s\ci I > 1-q^2=0.99$.

\begin{figure}[h]
\includegraphics[width=10cm]{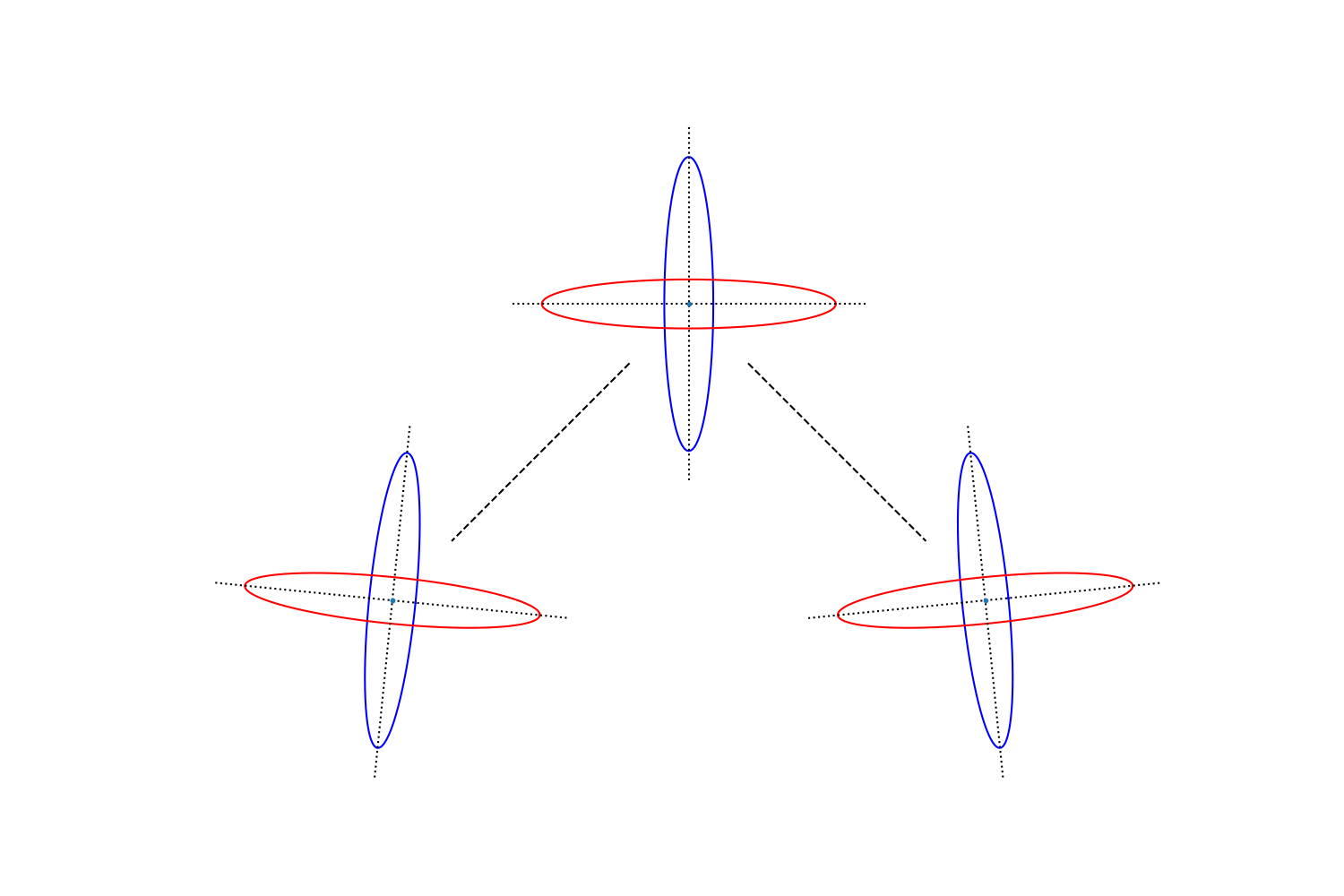}
%\includesvg[width=10cm]{MatrixWeights_Rotations.svg}
\caption{Rotation: The picture shows the rotation step. The blue ellipses are the images of the 
unit ball under operators $\La W \Ra_I$ (the top line) and $\La W \Ra_{I_\pm}$ (bottom line). The 
red ellipses are the corresponding images under the averages $\La W^{-1} \Ra_I$ and $\La W^{-1} 
\Ra_{I_\pm}$ respectively. 
}
\end{figure}

\subsection{Stretching}
\label{s:stretch} 
The next step of the construction is the so-called \emph{stretching}. It will be performed 
individually in each eigenspace, so it is an operation on scalar objects (eigenvalues). 

Fix a \emph{stretching parameter} $r=2-1/\bcQ$. 
Consider a point $z=(x,y)\in\R^2$, $x,y>0$, $xy = s\bcQ$, where $0.9\le s\le 1$. Then for $z_+ = 
(x_+, y_+)$,  
\[
x_+ = rx, \qquad y_+ = y/(sr),
\]
we have $x_+y_+=\bcQ$.

A point $z_- = (x_-,y_-)$ such that $z=(z_+ + z_-)/2$ is given by 
\begin{align}\label{e:stretch -parts}
x_- = (2-r)x = x/\bcQ, \qquad y_- = (2-1/(sr))y. 
\end{align}
Then 
\begin{align*}
x_-y_- = (2-1/(sr))xy/\bcQ = (2-1/(sr))s . 
\end{align*}
As we are interested in asymptotics with large values of $\bcQ$, we can assume that $\bcQ>2$, so 
recalling that 
$s\ge 0.9$ we can see that 
\begin{align*}
1\le x_-y_- %= (2-1/(sr))s  
\le 2.
\end{align*}
In particular $z_-=(x_-, y_-)$ lies in the region $\left\{(x,y): x,y>0, \, 1\le xy \le \bcQ   
\right\}$ as required.
However, we keep in mind that $x_-y_- \ll x_+ y_+ = \bcQ$, as well as $x_-y_- \ll x y = s\bcQ$ for 
large values of $\bcQ$. We refer to Figure \ref{F:stretching} below for a sketch of the stretching 
step. 
\begin{figure}[ht]
 \includegraphics[width=15cm]{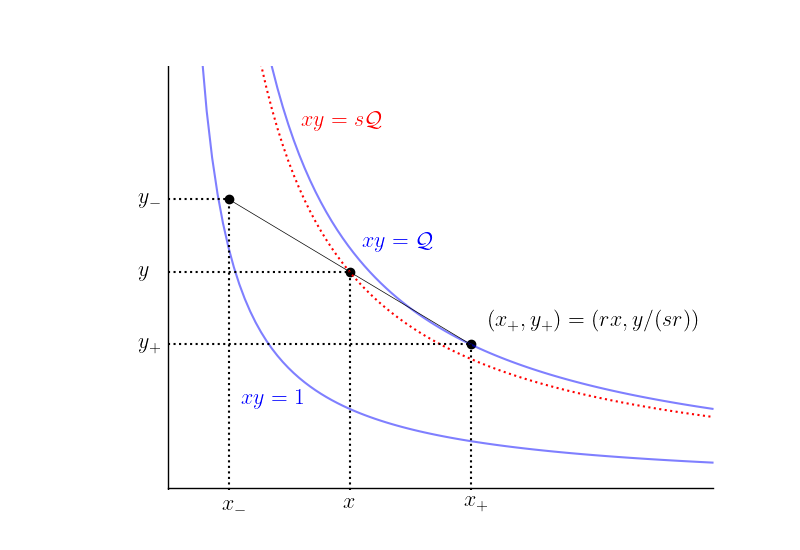}
% \includesvg[width=15cm]{xyStretching.svg}
 \caption{Stretching $(x,y) \mapsto (x_+,y_+)$}
 \label{F:stretching}
\end{figure}

\subsection{Complete construction}\label{ss:complete construction}
So, let us gather everything together. We start with the interval $I=I^0 =[0,1)$, which will give 
us the initial generation  
$\cS_0$ of \emph{stopping intervals}.   
As we are interested in asymptotics, $\bcQ$ can be as large as we want.

\subsubsection{Inductive construction and stopping intervals}\label{s:stopping int}
We will construct consequent generations $\cS_n$ of stopping intervals by induction.  Let $\cS_n$ 
be constructed. 
Take $I\in\cS_n$. 
The averages 
$\la W^{-1}\ra\ci{I}=\bv\ci{I}$ and $\la W\ra\ci{I}=\bw\ci{I}$ 
will be given by 
\begin{align*}
\La W^{-1}\Ra\ci{I} = \alpha_n \ba\ci{I} \ba\ci{I} ^* + \beta_n \bb\ci{I} \bb\ci{I}^* , \qquad 
\La W \Ra\ci{I}  = \beta_n^\# \ba\ci{I} \ba\ci{I} ^* + \alpha_n^\# \bb\ci{I} \bb\ci{I}^*, 
\end{align*}
where $\alpha_n \beta_n^\# = \alpha_n^\# \beta_n = \bcQ$.  Note that the eigenvalues $\alpha_n$, 
$\beta_n$, 
$\alpha_n^\#$, $\beta_n^\#$ depend only on the generation $\cS_n$ the interval $I$ is in, but not 
on the interval $I$ itself. 

We will see from the construction that  $\alpha_n\beta_n^\# = \alpha_n^\# \beta_n =\bcQ$ (true for 
$n=0$). 

We then apply the \emph{rotation}, described in Section \ref{s:rot} with parameter 
$\delta = \delta_n = q(\beta_n/\alpha_n)^{1/2} = q E_n^{-1/2}$  (recall 
that $E_n:=\alpha_n/\beta_n 
= \alpha_n^\#/\beta_n^\#$), to get the averages on the children $I_\pm$ of $I$. 
The averages are  
\begin{align}\label{e:Ave W}
\La W^{-1}\Ra\ci{I_\pm} = \wt\alpha_n \ba\ci{I_\pm} \ba\ci{I_\pm} ^* + \wt\beta_n \bb\ci{I_\pm} 
\bb\ci{I_\pm}^* , \qquad 
\La W \Ra\ci{I_\pm}  = \wt\beta_n^\# \ba\ci{I_\pm} \ba\ci{I_\pm} ^* + \wt\alpha_n^\# \bb\ci{I_\pm} 
\bb\ci{I_\pm}^*;   
\end{align}
the  eigenvalues $\wt\alpha_n$, $\wt\beta_n$, $\wt\alpha_n^\#$, $\wt\beta_n^\#$ 
are the same for both intervals, and are 
given by \eqref{e:wt alpha beta 01}
\begin{align}
\label{e:wt alpha beta 02}
\wt\alpha_n = \frac{\alpha_n - \delta_n^2\beta_n}{1-\delta_n^2}, \quad 
\wt\beta_n = \frac{\beta_n - \delta_n^2\alpha_n}{1-\delta_n^2} , \qquad
\wt\alpha_n^\# = \frac{\alpha_n^\#  - \delta_n^2\beta_n^\#}{1-\delta_n^2}, \quad 
\wt\beta_n^\# = \frac{\beta_n^\# - \delta_n^2\alpha_n^\#}{1-\delta_n^2}; 
\end{align}
note that since $\alpha_n>\beta_n$ we have that $\wt\alpha_n>\alpha_n$.  The eigenvectors 
$\ba\ci{I_\pm}$, $\bb\ci{I_\pm}$ 
are given by \eqref{e:ab_pm}, 
\begin{align}
\label{e:ab_pm 01}
\ba\ci{I_\pm} = (1+\delta_n^2)^{-1/2} (\ba\ci{I}  \pm \delta_n \bb\ci I), \qquad 
\bb\ci{I_\pm} = (1+\delta_n^2)^{-1/2} (\bb\ci I \mp 
\delta_n \ba\ci I), 
\end{align}

Then we apply the stretching operation independently in each of the principal axes
$\ba\ci{I_\pm}$, $\bb\ci{I_\pm}$; pairs $(\wt\alpha_n, \wt\beta^\#_n)$, $(\wt\alpha_n^\#, 
\wt\beta_n)$ will play the role of the pair $(x,y)$, and pairs $(\alpha_{n+1}, \beta^\#_{n+1})$, 
$(\alpha^\#_{n+1}, \beta_{n+1})$ will play the role of $(x_+,y_+)$. 
We put the results corresponding to $x_+$, $y_+$ to the right child of the
intervals $I_\pm$. Thus, for $J=I_+$ or $J=I_-$ we will have
\begin{align*}
\La W^{-1} \ra\ci{\!J_+} = \alpha_{n+1} \ba\ci{\!J} \ba\ci{\!J}^* + \beta_{n+1} \bb\ci{\!J}  
\bb\ci{\!J}^*, \qquad \La W \ra\ci{\!J_+} 
= \beta_{n+1}^\# \ba\ci{\!J} \ba\ci{\!J}^* + \alpha_{n+1}^\# \bb\ci{\!J}  \bb\ci{\!J}^*,
\end{align*}
where 
\begin{align}\label{e:wt alpha to alpha}
\alpha_{n+1} = r \wt\alpha_n, \quad \beta_{n+1}^\# = \wt\beta_n^\#/(s_n r), \qquad 
\alpha_{n+1}^\# 
= r \wt\alpha_n^\#, \quad \beta_{n+1}= \wt\beta_n/(s_nr),
\end{align}
and $s_n :=\wt\alpha_n \wt\beta_n^\# /\bcQ= \wt\alpha_n^\# \wt\beta_n/\bcQ$ is given by 
\eqref{e:s} (with $I$ replaced by $J$). 

As we mentioned above, $\wt\alpha_{n}>\alpha_n$, so
\begin{align}
\label{e:alpha_n growth}
\alpha_{n+1}>r\alpha_n, \qquad \alpha_{n+1}^\# > r \alpha_n^\#.
\end{align}
Since $\alpha_{n+1}\beta_{n+1}^\# = \alpha_{n+1}^\# \beta_{n+1} =\bcQ = \alpha_n\beta_n^\# = 
\alpha_n^\# \beta_n$, we conclude that 
\begin{align}\label{e:quot alphas betas}
{\alpha_{n+1}}/{\alpha_n} = {\beta_n^\#}/{\beta_{n+1}^\#}, \qquad 
{\alpha_{n+1}^\#}/{\alpha_n^\#} = {\beta_n}/{\beta_{n+1}}, 
\end{align}
so 
\begin{align}
\label{e:beta_n decrease}
\beta_{n+1} < \beta_n/r, \qquad \beta_{n+1}^\# < \beta_n^\#/r .
\end{align}
Recalling that $\delta_n = q (\beta_n/\alpha_n)^{1/2} = q (\beta_n^\#/\alpha_n^\#)^{1/2}$ we can 
see that 
\begin{align}\label{e:theta quotient}
\delta_{n+1} < \delta_n / r .
\end{align}

\begin{rem}\label{r:asympt qoutients}
One can easily see  that $\wt\alpha_n \underset{\delta_0<}{\sim}\alpha_n$, $\wt\alpha_n^\# 
\underset{\delta_0<}{\sim}\alpha_n^\#$, so
\begin{align*}
\alpha_{n+1} \underset{\delta_0<}{\sim} r \alpha_n, \qquad 
\alpha_{n+1}^\# \underset{\delta_0<}{\sim} r \alpha_n^\# , 
\end{align*}
so by \eqref{e:quot alphas betas}
\begin{align*}
\beta_{n+1} \underset{\delta_0<}{\sim} \beta_n/r, \qquad
\beta_{n+1}^\# \underset{\delta_0<}{\sim} \beta_n^\#/r. 
\end{align*}
Therefore, we can conclude that 
\begin{align*}
\delta_{n+1}  \underset{\delta_0<}{\sim} \delta_n/r. 
\end{align*}

\end{rem}

\begin{rem}\label{r:alpha/alpha}
One can see from the construction that 
\begin{align}\label{e:alpha/alpha}
\alpha_n^\# = c\alpha_n, \quad \beta_n = c\beta_n^\#, \qquad \text{where}\quad c=
\alpha_0^\#/\alpha_0. 
\end{align}
Indeed, since $\alpha_0 \beta_0^\# = \alpha_0^\# \beta_0 =\bcQ$,  this trivially holds for $n=0$. 
Assuming that \eqref{e:alpha/alpha} holds for $n$, we see from \eqref{e:wt alpha beta 02} that 
\begin{align*}
\wt\alpha_n^\# = c\wt\alpha_n, \quad \wt\beta_n = c\wt\beta_n^\#,
\end{align*}
and \eqref{e:wt alpha to alpha} implies that \eqref{e:alpha/alpha} holds for $n+1$. 

We do not need this fact to prove the main result, but it is used later in Section 
\ref{s:HaarShift} to prove the lower bound for the Haar Shift $\ShaUp$. 
\end{rem}

The intervals $I_{++}$ and $I_{-+}$, with $I\in\cS_n$ will give us the next generation $\cS_{n+1}$ 
of stopping intervals. Finally, let us denote $\cS =\cS(I^0):=\bigcup_{n\ge0}\cS_n$. Denote also  
$\cS_{<}=\cS_{<}(I^0):=\cS\setminus \{I^0\} = \bigcup_{n>0}\cS_n$. 

\subsubsection{Terminal intervals}\label{s:terminal int}
We do not care what happens on the intervals $I_{+-}$ and $I_{--}$, with $I\in\cS_n$, we 
just terminate our 
martingale 
there. 
Namely,  we know from the 
construction of \emph{stretching}, see Section \ref{s:stretch},  that for 
$J=I_\pm$ we have for the averages
\begin{align*}
\La W^{-1}\Ra\ci{\!J_-}^{-1} \le \La W\Ra\ci{\!J_-} \le 2 \La W^{-1}\Ra\ci{\!J_-}^{-1}. 
\end{align*}
Thus, by Lemma \ref{l:stop} there exists an $\bA_2$ weight $W$ on $J_-$, which is constant on the  
children of $J_-$ 
%{\color{red} (instead of $I_-$)} 
and has the prescribed averages, and we define 
$W$ on $J_-$ to be this weight. 

We will call such intervals $I_{+-}$, $I_{--}$, $I\in\cS$  \emph{terminal 
intervals}, and denote by $\cT$ the collection of all terminal intervals.

\begin{figure}[h]
 \includegraphics[width=15cm]{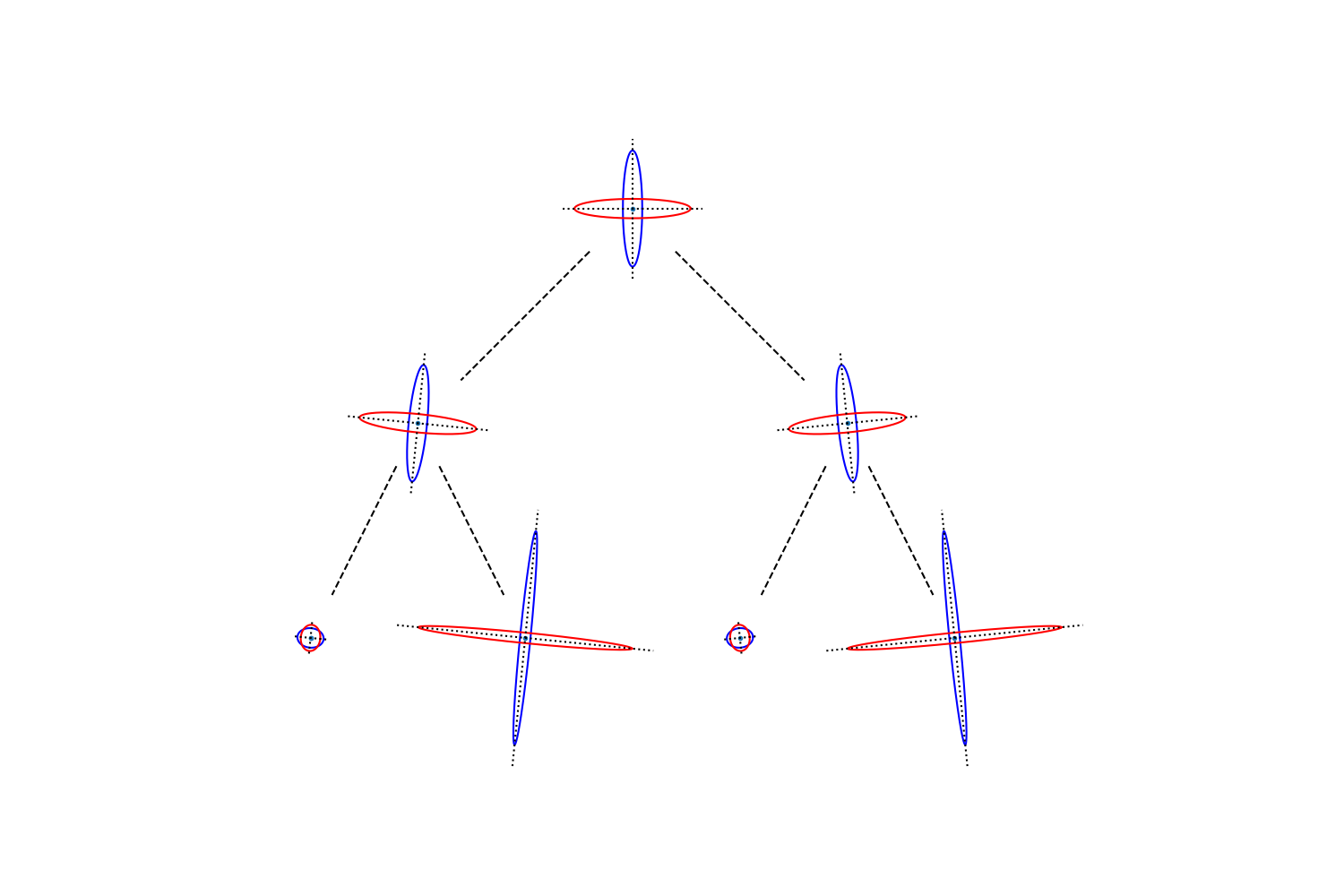}
% \includesvg[width=15cm]{MatrixWeights_Rotations_and_Stretching.svg}
 \caption{Dyadic tree: the picture shows two dyadic time steps: a rotation followed by stretching.}
\end{figure}

Thus we constructed martingales given by the averages $\bv\ci I$ and $\bw\ci I$. While it is 
possible (and not hard) to show 
that the martingales are uniformly integrable, and so are the averages of (matrix-valued) functions 
$V$ and  $W$ and that $V=W^{-1}$ (on terminal intervals we already have $V=W^{-1}$ by the 
construction), we chose an easier way and terminate the process, using again Lemma \ref{l:stop} 
after finitely many steps. We will write all the sums as infinite ones, but will keep finitely many 
terms that give us the desired lower bound. 

\section{Lower bounds for paraproducts} 
\label{s:est-para}

In this section we will prove the lower bound $c\bcQ^{3/2}$ for a special paraproduct, which will 
be a crucial part in the proof of the main result. 
In this section $\bA_2$ always means the \emph{dyadic} $\bA_2$ class $\bA_2\ut{dy}$, and  we will 
be using the notation $\bcQ:=[W]\ci{\bA_1}\ut{dy}$. 
%Often in this
%section we will skip the index dy to simplify the write-up.

Recall that $\cS_{<} =\cS_{<}(I^0) :=\bigcup_{n>0}\cS_n$, where $\cS_n$ are the generations of 
stopping intervals 
constructed above in Section \ref{s:constr}, and that $\cS=\cS(I^0) :=\bigcup_{n\ge 0}\cS_n $ is 
the collection of all stopping intervals.

Define the paraproduct $\Pi$ by 
\begin{align*}
\Pi f = \sum_{I\in\cS_{<}} \La f \Ra\ci{I}   \wh h\ci{I} = \sum_{I\in\cS_{<}} |I|^{1/2} \La f 
\Ra\ci{I}   
h\ci{I} , 
\end{align*}
where $\wh h \ci{I}$ is the $L^\infty$ normalized Haar function, $\wh h\ci I = \1\ci{I_+} - 
\1\ci{I_-}$, and $h\ci{I} = |I|^{-1/2} \wh h\ci{I}$ is the standard $L^2$-normalized Haar 
function.  

Since $\cS_{<}$ is a sparse family of intervals ($2$-Carleson, see the Definition \ref{df:la 
carleson} in Section \ref{s:ess part} below), the 
paraproduct is bounded in the unweighted 
$L^2$ space, or, equivalently its symbol $b:= \sum_{I\in\cS} \wh h\ci I$ is in the \emph{dyadic} 
BMO 
space. 

We claim that for the weight $W$ constructed above we have for all sufficiently large $\bcQ$ that  
$\|\Pi\|\ci{L^2(W)}\underset{\delta_0<}{\gtrsim}
([W]\ci{\bA_2}\ut{dy})^{3/2}$.

More precisely, the following lemma holds

\begin{lm}
\label{l:para lower bound}
For the weight $W$ constructed above and all sufficiently large $\bcQ$, we have for a function 
$f=\1\ci{I^0}W^{-1}\be$, where $\be = \ba\ci{I^0}$ or $\be= \ba\ci{I^0}+\bb\ci{I^0}$
\begin{align*}
\| \Pi f \|\ci{L^2(W)} \underset{\delta_0<}{\gtrsim}\bcQ^{3/2} \|f\|\ci{L^2(W)} \asymp 
\bcQ^{3/2}\alpha_0^{1/2}. 
\end{align*}
\end{lm}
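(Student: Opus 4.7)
\medskip

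\noindent\textbf{Proof plan.}
Since every terminal interval $T\in\cT$ arises as $T=J_-$ for $J=I_\pm$, $I\in\cS_n$, and the construction in Section~\ref{s:terminal int} makes $W$ averaged on the children of $T$, it is convenient to use that $\Pi f$ is constant on each $T$. Namely, writing $I^{(1)},\dots,I^{(n)}=I$ for the stopping ancestors of $T$ (where $T\subset I^{(n)}\in\cS_n$), one has
\begin{align*}
\Pi f\big|_T \;=\; v_T \;:=\; \sum_{k=1}^{n}\epsilon_k(T)\,c_{I^{(k)}},\qquad c_I:=\bv_I\be,
\end{align*}
so that $\|\Pi f\|_{L^2(W)}^2=\sum_{T\in\cT}(\La W\Ra_T v_T,v_T)|T|$. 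A routine check with $\be=\ba\ci{I^0}$ (or $\ba\ci{I^0}+\bb\ci{I^0}$) gives $\|f\|_{L^2(W)}^2=(\bv\ci{I^0}\be,\be)\asymp\alpha_0$, so it remains to show $\|\Pi f\|_{L^2(W)}^2\underset{\delta_0<}{\gtrsim}\bcQ^3\alpha_0$.

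First I would expand $c_{I^{(k)}}=\bv_{I^{(k)}}\be$; the dominant contribution is $\alpha_k(\be,\ba_{I^{(k)}})\ba_{I^{(k)}}\approx\alpha_k\ba_{I^{(k)}}$ since the $\bb$ part is suppressed by $\beta_k/\alpha_k$ and small projection. Working in the local basis $\ba_J,\bb_J$ at the parent $J$ of $T$, I would use the small-angle formulas $\sin\approx\cdot$, $\cos\approx1$ for the cumulative rotation angles $\psi_k=\sum_{j<k}\sigma_j\delta_j$ (where $\sigma_j\in\{+,-\}$ encodes the path) to obtain
\begin{align*}
(v_T,\bb_J)\;\approx\;\sum_{k=1}^{n}\epsilon_k\alpha_k(\psi_k-\phi),
\end{align*}
where $\phi$ is the angle of $\ba_J$ from $\ba\ci{I^0}$. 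The crucial observation is that $\epsilon_k=\sigma_k$ for the intermediate levels (both are the sign $s_{k+1}$ picked at the same step of the stopping tree), so when expanding $\psi_k-\phi=-\sum_{j\ge k}\sigma_j\delta_j$ the diagonal terms $\sigma_k^2\alpha_k\delta_k=\alpha_k\delta_k$ survive without the random sign. This produces the \emph{deterministic} (path-independent) contribution
\begin{align*}
(v_T,\bb_J) \;=\; -\sum_{k=1}^{n}\alpha_k\delta_k \;+\; \text{(mean-zero random remainder)},
\end{align*}
and the identity $\alpha_k\delta_k=q\sqrt{\alpha_k\beta_k}=q\sqrt{\bcQ/c}$ (constant in $k$, using $\alpha_k^\#\beta_k=\bcQ$ and $\alpha_k^\#=c\alpha_k$ from Remark~\ref{r:alpha/alpha}) shows that this deterministic part has size $\sim n\bcQ^{1/2}$.

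To turn this into a lower bound I would drop the random remainder (after summing $|(v_T,\bb_J)|^2$ over $T\in\cT_n$ the cross terms vanish by symmetry, leaving the deterministic square as a lower bound) and estimate $(\La W\Ra_T v_T,v_T)\ge B_n|(v_T,\bb_J)|^2$ where $B_n=\wt\alpha_n^\#/\bcQ\sim\alpha_n/\bcQ$ is the $\bb_J$-eigenvalue of $\La W\Ra_T$ coming from the stretching formula \eqref{e:stretch -parts}. Using $|\cT_n|=2^{n+1}$ and $|T|=2^{-2n-2}$ for $T\in\cT_n$, the per-generation contribution becomes
\begin{align*}
\sum_{T\in\cT_n}(\La W\Ra_T v_T,v_T)|T|\;\underset{\delta_0<}{\gtrsim}\; \frac{\alpha_n}{\bcQ}\cdot n^2\bcQ\cdot 2^{-n-1}\;=\;n^2\alpha_n\,2^{-n-1}.
\end{align*}
Finally, since $\alpha_n\asymp r^n\alpha_0$ with $r=2-1/\bcQ$, the geometric-type sum $\sum_n n^2(r/2)^n\asymp\sum_n n^2(1-1/(2\bcQ))^n\asymp\bcQ^3$ closes the estimate, producing $\|\Pi f\|_{L^2(W)}^2\gtrsim\bcQ^3\alpha_0\asymp\bcQ^3\|f\|_{L^2(W)}^2$ once $N$ is chosen large enough relative to $\bcQ$ and $\delta_0$ small enough.

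The main obstacle is the identification and rigorous isolation of the deterministic $\bb_J$-component. A na\"\i ve estimate that only exploits the dominant $\ba_J$-direction (where $\La W\Ra_T$ has eigenvalue $\sim\bcQ/\alpha_n$ and $v_T$ has a random signed-sum of size $\alpha_n$) yields only the bound $\|\Pi f\|_{L^2(W)}\gtrsim\bcQ\|f\|_{L^2(W)}$, which is \emph{not} enough; the extra $\bcQ^{1/2}$ factor is supplied entirely by the resonance between the Haar signs $\epsilon_k$ and the rotation signs $\sigma_k$. Secondary technical issues will be (a) justifying the small-angle approximations uniformly along the tree (which requires $|\psi_k|\lesssim\delta_0$, guaranteed by \eqref{e:theta quotient}), and (b) checking that the $\beta_k\psi_k$ correction in $c_{I^{(k)}}$ and the mean-zero random remainder in $(v_T,\bb_J)$ are genuinely of lower order.
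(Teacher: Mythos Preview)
Your approach is correct and takes a genuinely different route from the paper's. The paper expands $\|\Pi f\|_{L^2(W)}^2$ bilinearly over pairs $I,J\in\cS_{<}$ with $J\subsetneq I$, computes $\la W\wh h_J\ra_J$ explicitly as an off-diagonal operator in the $\ba_J,\bb_J$ basis, and obtains positivity essentially term-by-term by matching the Haar sign $\wh h_I|_J$ with the sign of $(\ba_I,\overline{\bb}_I^{\,k})$ (the average of $\bb_J$ over $J\in\cS_\pm^k(I)$); summing over generations $k>n$ produces the geometric factor $\sum_k(r/2)^{k-n}\asymp\bcQ$, and summing over $I$ supplies the remaining $\bcQ$. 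You instead localize to terminal intervals, project $v_T$ onto $\bb_J$, and use the coincidence $\epsilon_k=\sigma_k$ to isolate a path-independent component $D_n$; squaring gives $n^2$, multiplying by the eigenvalue $B_n$ and summing $\sum_n n^2(r/2)^n\asymp\bcQ^3$ closes the bound. Both arguments rest on exactly the same sign resonance, but yours packages it probabilistically (mean-zero remainder over the $2^{n+1}$ sign choices), while the paper's is a direct positivity check on each $(I,J)$ cross-term. Two small points: (i) your $B_n=\wt\alpha_n^\#/\bcQ\sim\alpha_n/\bcQ$ should read $\sim\alpha_n^\#/\bcQ=c\,\alpha_n/\bcQ$, but this is harmless because the $c$ cancels against the $1/c$ in $(\alpha_k\delta_k)^2=q^2\bcQ/c$, so $B_nD_n^2\sim n^2\alpha_n$ is correct; (ii) your error control does go through uniformly in $n$: since $|\psi_k-\phi|\lesssim\delta_k$ (not just $\delta_0$) and $\alpha_k\delta_k$ is constant in $k$, the per-term error in $(c_{I^{(k)}},\bb_J)$ is $O(\alpha_k\delta_k\cdot\delta_0^2)$, whence $|E_T|=O(|D_n|\,\delta_0^2)$ and the deterministic square survives.
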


\begin{rem}
For $\be = \ba\ci{I^0}$ we have $\|f\|\ci{L^2(W)} = \alpha_0^{1/2}$, and for $\be= 
\ba\ci{I^0}+\bb\ci{I^0}$ we only have the two sided estimates. 
\end{rem}

\begin{rem}
It looks more natural to consider the paraproduct  where the sum is taken over the whole family 
$\cS$, but for what follows later in Section \ref{s:Lower Haar shifts} it is more convenient to 
consider the sum only over $\cS_{<}$.  
However, since both paraproducts differ by one  term of the norm at most $\bcQ^{1/2}$ (with 
constant $1$), it really does not matter which paraproduct is used in Lemma \ref{l:para lower 
bound}: the bound for one immediately implies the bound for the other. 
\end{rem}

\subsection{Preliminaries and main idea}
For an interval $I\in\cS$ let us denote $\cS(I):=\{J\in\cS:J\subset I\}$,   
$\cS_{<}(I):=\cS(I)\setminus\{I\}$. Recall also that $\cS_{<} :=\cS_{<}(I^0)$.

We can write
\begin{align*}
\| \Pi (W^{-1}\1\ci{I^0}\be) \|\ci{L^2(W)}^2 & =  \sum_{I\in\cS_{<}}   \sum_{J \in\cS_{<}} 
\left( \La W^{-1} \Ra\ci{I}  \wh h\ci{I}  \be ,  \La W^{-1} \Ra\ci{\!J} \wh h\ci{\!J}  \be 
\right)\ci{L^2(W)}     \\
&=  \sum_{I\in\cS_{<}}   \left\|  \La W\Ra\ci{I}^{1/2}  \La W^{-1}\Ra\ci{I}  \be 
\right\|_{\R^2}^2    |I|
\\
& \quad  + 
2 \sum_{I\in\cS_{<}}  \sum_{J\in\cS_{<}(I) }   
\int _J \left( \La W^{-1} \Ra\ci{I} \be,  W(s) \wh h\ci{I} (s)\wh h\ci{\!J} 
(s)\La W^{-1} \Ra\ci{\!J}\be  \right) \ci{\R^2}  \dd s. 
\end{align*}
It is known that the first sum is always estimated above by $C\bcQ$, $\bcQ:=[W]\ci{\bA_2}\ut{dy}$.  
It will not be used here, but it explains why we do not bother with the first sum, and only will be 
estimating the second one below.
Formally, since the first sum is trivially non-negative, a lower bound for the second sum (the sum, 
not just its absolute value) gives us the same lower bound for $\| \Pi (W^{-1}\1\ci{I^0}\be) 
\|\ci{L^2(W)}^2$.

Denote by $\cS_+(I)$ (respectively $\cS_-(I)$) the intervals $J\in\cS(I)$ such that $J\subset I_+$ 
(respectively $J\subset 
I_-$). Since for $J\in\cS_\pm(I)$
\begin{align*}
|J|^{-1} \int_J  W \wh h \ci I \wh h\ci{\!J} \dd s = \pm \La W \wh h\ci{\!J} \Ra\ci{\!J}, 
\end{align*}
we can rewrite the second sum as 
\begin{align}\notag
 \sum_{I\in\cS_{<}} & \left( \sum_{J\in\cS_+(I) }    \left( \La W^{-1} \Ra\ci{I}  \be,  \La W \wh 
 h\ci{\!J}\Ra\ci{\!J}  \La W^{-1} \Ra\ci{\!J} 
 \be 
 \right)\ci{\R^2} |J|\right.\\ \label{e:S2}
 &\qquad\qquad \left. - \sum_{J\in\cS_-(I) }    \left( \La W^{-1} \Ra\ci{I} \be,  \La W \wh 
 h\ci{\!J} \Ra\ci{\!J}  \La W^{-1} \Ra\ci{\!J} \be 
 \right)\ci{\R^2}  |J| \right).
\end{align}

In the scalar case each term in \eqref{e:S2} is estimated  above by $\La w^{-1} \Ra\ci{I} \bcQ 
|J|$. For $I\in\cS_n$ summing over 
$J\in\cS_k(I) $, $k> n$ one gets the bound 
%$ \bcQ\La w^{-1} \Ra\ci{I} |I| = 2^{-2k}\bcQ\La w^{-1} \Ra\ci{I}$ . 
%%
\begin{align}\label{e:S3}
\sum_{J\in\cS_k(I)} \La w^{-1} \Ra\ci{I} \La w \Ra\ci{\!J} \La w^{-1} \Ra\ci{\!J} |J| \le
 2^{-n-k}\bcQ\La w^{-1} \Ra\ci{I} = 2^{n-k} \bcQ\La w^{-1} \Ra\ci{I} |I|
\end{align}
(there are $2^{k-n}$ intervals $J\in\cS_k(I)$ and $|J|=2^{-2k}$).  
The summation over $k>n$ then gives us 
\begin{align}
\label{e:S4scalar}
\sum_{J\in\cS_{<}(I)} \La w^{-1} \Ra\ci{I} \La w \Ra\ci{\!J} \La w^{-1} \Ra\ci{\!J} |J| \le 
\sum_{k>n} 
 2^{n-k}\bcQ\La w^{-1} \Ra\ci{I} |I| \le \bcQ\La w^{-1} \Ra\ci{I} |I| .
\end{align}
It is well known that for a sparse sequence $\cS =\cS(I^0)$ and an $A_2$ weight $w$ with $A_2$
characteristic $\bcQ$ 
\begin{align*}
\sum_{I\in\cS(I^0)} \La w^{-1} \Ra\ci{I} |I| \le C\bcQ \La w^{-1} \Ra\ci{I^0} |I^0|,
\end{align*}
where the constant depends only on the sparseness characteristic of $\cS$. Thus, in the scalar case 
the sum \eqref{e:S2} is estimated above by $C\bcQ^2 \La w^{-1} \Ra\ci{I^0} |I^0|$, which gives, as 
one would expect, the linear in $\bcQ$ upper bound for the norm.  

In the matrix-valued case the $\bA_2$ condition does not imply any bounds on the norm of $\La 
W^{-1} \Ra\ci{\!J} \La W \Ra\ci{\!J}$. In our case we are able to estimate below \emph{most} of the 
terms 
in \eqref{e:S2}, $I\in\cS_n$, $J\in\cS_k(I)$ (with sign) by something like 
\begin{align*}
c \bcQ  r^{k-n}\left( \La W^{-1} \Ra\ci{I}  \be,  \be \right)\ci{\R^2} |J|, 
\end{align*}
i.e.~to gain the extra factor $r^{k-n}$ (recall that $r=2-1/\bcQ$). Thus, instead of summing the 
geometric series $\sum_1^\infty 2^{-k}=1$ as in \eqref{e:S4scalar}, we end up summing a bigger 
geometric series
\begin{align*}
\sum_{k\ge 1} (r/2)^k =2\bcQ-1, 
\end{align*}
which give us an extra factor $\bcQ$ in the sum. Since the sum gives us the square of the norm, 
this translates to the extra factor $\bcQ^{1/2}$ in the lower bound for the norm. Of course, there 
are a lot of technical details, that need to be carefully followed.

\subsection{Preliminary estimates}

 Let $I\in\cS_n$,  $J\in\cS(I)$, $J\in\cS_k$. Then it is 
easy to see that in the basis 
$\ba\ci{\!J}$, 
$\bb\ci{\!J}$ the 
operators $\ba\ci{\!J_\pm} \ba\ci{\!J_\pm}^*$ and $\bb\ci{\!J_\pm} \bb\ci{\!J_\pm}^*$ can be 
written as 
\begin{align}\label{e:aa* pm}
\ba\ci{\!J_\pm} \ba\ci{\!J_\pm}^* = \frac{1}{1+\delta_k^2} 
\begin{pmatrix}
1 & \pm \delta_k \\ \pm \delta_k & \delta_k^2
\end{pmatrix} ,  \qquad 
\bb\ci{\!J_\pm} \bb\ci{\!J_\pm}^* = \frac{1}{1+\delta_k^2} 
\begin{pmatrix}
\delta_k^2 & \mp \delta_k \\ \mp \delta_k & 1
\end{pmatrix} . 
\end{align}
Note that  $2 \La W \wh h\ci{\!J} \Ra\ci{\!J} = \La W \Ra\ci{\!J_+} - \La W \Ra\ci{\!J_-}$. Then 
using 
\eqref{e:aa* pm} and formulas \eqref{e:Ave W} for the averages we get 
\begin{align}\label{e:Av W h_J}
 \La W \wh h\ci{\!J} \Ra\ci{\!J}  =  - \delta_k \frac{\wt\alpha_k^\# - \wt 
 \beta_k^\#}{1+\delta_k^2} 
 \left(\ba\ci{\!J} \bb\ci{\!J} ^*   + 
 \bb\ci{\!J} 
 \ba\ci{\!J} ^* \right) . 
\end{align}
Then for $\be = \ba_0$ (or for $\be = \ba_0 + \bb_0$) we get
\begin{align*}
\La W^{-1} \Ra\ci{\!J} \be = (\be, \ba\ci{\!J})\ci{\R^2} \alpha_{k}\ba\ci{\!J} +
(\be, \bb\ci{\!J})\ci{\R^2} \beta_{k}\bb\ci{\!J},
\end{align*}
so
\begin{align}\label{e:sa-tb}
 \La W \wh h\ci{\!J} \Ra\ci{\!J}  \La W^{-1} \Ra\ci{\!J} \be & = s\ci{\!J}\ba\ci{\!J} - t\ci{\!J} 
 \bb\ci{\!J}, 
\end{align}
where  
\[
s\ci{\!J}=-\delta_k \frac{\wt \alpha_k^\# - \wt 
\beta_k^\#}{1+\delta_k^2}\beta_k(\be,\bb\ci{\!J})\ci{\R^2},
\qquad
t\ci{\!J}=\delta_k \frac{\wt \alpha_k^\# - \wt 
\beta_k^\#}{1+\delta_k^2}\alpha_k(\be,\ba\ci{\!J})\ci{\R^2}.
\]

We estimate 
\begin{align}\notag
|s\ci{\!J}| & \underset{\delta_0<}\lesssim %\beta_k\alpha_k^\# \delta_k \asymp 
\beta_k\alpha_k^\# \delta_k = \bcQ \delta_k,     
\intertext{and write}   \label{e:t_k}
t\ci{\!J} = (\be,  \ba\ci{\!J})\ci{\R^2}  t_k,  \qquad t_k &= \delta_k 
\frac{\wt\alpha_k^\# - \wt 
\beta_k^\#}{1+\delta_k^2}  \alpha_k \underset{\delta_0<}\sim \delta_k \alpha_k \alpha_k^\#  =  q 
\sqrt{\bcQ 
\alpha_k^\#\alpha_k} =: \wt t_k .  
\end{align}
 The sign ``$-$'' in front of $t\ci{\!J}$ is of 
essence here.  

We also have 
\begin{align*}
\La W^{-1}  \Ra\ci I \be & = \sigma\ci{I} \ba\ci I + \tau\ci{I} \bb\ci I  
\intertext{where}
\sigma\ci{I} =  \alpha_n ( \ba\ci{I}, \be)\ci{\R^2}  , \qquad &\tau\ci{I}= \beta_n ( \bb\ci{I}, 
\be)\ci{\R^2} . 
\end{align*}

Let us estimate $ \left( \La W^{-1} \Ra\ci{I} \be,  \La W \wh h\ci{\!J} \Ra\ci{\!J}  \La W^{-1} 
\Ra\ci{\!J} \be  
\right)\ci{\R^2}$.  We claim and we will show below 
that the main contribution comes from the inner product $-(\sigma\ci{I} \ba\ci{I}, 
t\ci{\!J} \bb\ci{\!J})\ci{\R^2}$.  
Writing  this inner product we get
\begin{align}\label{e:main term 01}
\sigma\ci{I} t\ci{\!J} (\ba\ci{I}, \bb\ci{\!J})\ci{\R^2} = \alpha_n t_k  
 (\ba\ci{I}, \be)\ci{\R^2} (\ba\ci{\!J}, \be)\ci{\R^2} (\ba\ci{I}, \bb\ci{\!J})\ci{\R^2} .
\end{align}

We will need the following simple lemma. 
\begin{lm}
\label{l:thetas}
Let $J\in\cS(I)$, $I\in\cS_n$, $J\in\cS_k(I)$, and let $\bcQ\ge 4$.  Then the angle 
$\theta\ci{I,J}$ 
between $\ba\ci{I}$ and 
$\ba\ci{\!J}$ can be estimated as 
\begin{align*}
|\theta\ci{I,J}| \le 3 \,\theta_n , \qquad \theta_n=\arctan \delta_n.
\end{align*}
\end{lm}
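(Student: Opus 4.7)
My approach exploits the fact that in the construction of Section \ref{ss:complete construction}, the eigenbasis evolves only at the rotation step, while the stretching step preserves it. The plan is to first verify that passing from $I\in\cS_n$ to any direct successor $J\in\cS_{n+1}(I)$ rotates $\ba\ci I$ by exactly $\pm\theta_n$, then iterate, and finally bound the resulting geometric series.

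For the first step, formulas \eqref{e:ab_pm 01} give $\ba\ci{I_\pm} = (1+\delta_n^2)^{-1/2}(\ba\ci I \pm \delta_n \bb\ci I) = \cos\theta_n\, \ba\ci I \pm \sin\theta_n\, \bb\ci I$, so this is exactly a planar rotation of signed angle $\pm\theta_n$. Stretching applied on the grandchildren $I_{\pm\pm}$ only rescales eigenvalues in the already-established eigendirections, leaving the basis unchanged. Hence for $J = I_{\pm +}\in\cS_{n+1}(I)$ we get $\ba\ci J \in\{\ba\ci{I_+},\ba\ci{I_-}\}$. Iterating $k$ times, for $J\in\cS_{n+k}(I)$ the basis $\ba\ci J$ is obtained from $\ba\ci I$ by composing $k$ planar rotations of signed angles $\pm\theta_n,\pm\theta_{n+1},\ldots,\pm\theta_{n+k-1}$. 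Since planar rotations commute, the composition is the rotation by the algebraic sum of these signed angles, and the triangle inequality gives
$$|\theta\ci{I,J}| \;\le\; \sum_{j=n}^{n+k-1} \theta_j \;\le\; \sum_{j\ge n}\theta_j.$$

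Next, I would bound this tail using the decay \eqref{e:theta quotient}, which reads $\delta_{j+1}<\delta_j/r$ with $r=2-1/\bcQ$; together with the trivial inequality $\theta_j=\arctan\delta_j\le\delta_j$ this gives
$$\sum_{j\ge n}\theta_j \;\le\; \sum_{j\ge n}\delta_j \;<\; \delta_n\sum_{j\ge 0}r^{-j} \;=\; \delta_n\cdot\frac{r}{r-1} \;=\; \delta_n\cdot\frac{2\bcQ-1}{\bcQ-1}.$$
For $\bcQ\ge 4$ the factor $(2\bcQ-1)/(\bcQ-1)=2+1/(\bcQ-1)$ is at most $7/3$, so $|\theta\ci{I,J}|<(7/3)\delta_n$.

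Finally I would convert from $\delta_n$ back to $\theta_n$. Since $\delta_n\le\delta_0 = q E_0^{-1/2}\le q=0.1$, the elementary bound $\arctan x \ge x/(1+x^2)$ yields $\theta_n \ge \delta_n/(1+\delta_n^2)\ge \delta_n/1.01$, hence $\delta_n\le 1.01\,\theta_n$. Combining, $|\theta\ci{I,J}|<(7/3)(1.01)\theta_n<2.36\,\theta_n<3\,\theta_n$, which is the claim. The only subtle point — not a real obstacle, but the one place where the constant $3$ and the hypothesis $\bcQ\ge 4$ meet — is the discrepancy between the recursion parameter $\delta_n$ and the true rotation angle $\theta_n=\arctan\delta_n$; this discrepancy is comfortably absorbed thanks to $q=0.1$, leaving the bound $3\theta_n$ with room to spare.
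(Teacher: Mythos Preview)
Your proof is correct and follows essentially the same route as the paper: bound $|\theta\ci{I,J}|$ by the telescoping sum $\sum_{j\ge n}\theta_j\le\sum_{j\ge n}\delta_j$, sum the geometric series using $\delta_{j+1}<\delta_j/r$ to get the factor $(2\bcQ-1)/(\bcQ-1)\le 7/3$ for $\bcQ\ge 4$, and then convert $\delta_n$ back to $\theta_n$ using $\delta_n\le 0.1$. The only cosmetic difference is that the paper invokes concavity of $\arctan$ (yielding $\theta_n\ge 0.99\,\delta_n$) where you use $\arctan x\ge x/(1+x^2)$ (yielding $\theta_n\ge \delta_n/1.01$); both give the same final bound with room to spare.
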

\begin{proof} The angle is trivially estimated by 
\begin{align*}
\sum_{j=n}^\infty \theta_j \le \sum_{j=n}^\infty \delta_j \le\frac{\delta_n}{1-1/r} = \delta_n 
\frac{2\bcQ-1}{\bcQ-1} \le (7/3)\delta_n . 
\end{align*}
By the construction $\delta_n\le q=0.1$, so using concavity of $\arctan$ on $\R_+$ we get 
\begin{align*}
\theta_n =\arctan\delta_n = \delta_n (\arctan \delta_n) /\delta_n \ge \delta_n (\arctan 0.1) /0.1 
\ge 0.99 \delta_n.  
\end{align*}
Thus the angle is bounded by $(7/2)\delta_n/0.99 \le 3 \theta_n$, 
which completes the estimate. 
\end{proof}

Assume that $J\in\cS(I)$, $I\in\cS_n$, $J\in\cS_k$, and that $\bcQ\ge 10$. We get from Lemma 
\ref{l:thetas} that 
\begin{align*}
(\ba\ci{\!J}, \be)\ci{\R^2} = (\ba\ci{I}, \be)\ci{\R^2} + O\ci{\delta_0<}(\delta_n). 
\end{align*}
Since $\bb\ci{\!J}\perp \ba\ci{\!J}$, Lemma \ref{l:thetas} implies that $|(\ba\ci{I}, 
\bb\ci{\!J})\ci{\R^2}| \le 3 \theta_n\le 3\delta_n$, so
\begin{align*}
\sigma\ci{I} t\ci{\!J} (\ba\ci{I}, \bb\ci{\!J})\ci{\R^2} & = \alpha_n t_k  
\left[ (\ba\ci{I}, \be)\ci{\R^2}^2 + O\ci{\delta_0<}(\delta_n)\right] (\ba\ci{I}, 
\bb\ci{\!J})\ci{\R^2} 
\\
& = \alpha_n t_k \left[ (\ba\ci{I}, \be)\ci{\R^2}^2 (\ba\ci{I}, \bb\ci{\!J})\ci{\R^2}  + 
O\ci{\delta_0<}(\delta_n^2)\right] .
\end{align*}

\begin{lm}
\label{l:main term}
Let $J\in\cS(I)$, $I\in\cS_n$, $J\in\cS_k$, and let $\bcQ\ge 10$. 
%Then for all sufficiently large 
%values of $E_0$
%%
\begin{align}
\label{e:term}
\left( \La W^{-1} \Ra\ci{I} \be,  \La W \wh h\ci{\!J} \Ra\ci{\!J}  \La W^{-1} \Ra\ci{\!J} \be  
\right)\ci{\R^2}    
= - \alpha_n t_k \left[ (\ba\ci{I}, \be)\ci{\R^2}^2 (\ba\ci{I}, \bb\ci{\!J})\ci{\R^2}  + 
O\ci{\delta_0<}(\delta_n^2)\right]  
\end{align}
\tup{(}with implied constants not depending on $n$, $k$, $I$, $J$, $\bcQ$\tup{)}.
\end{lm}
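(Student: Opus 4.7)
The plan is to expand the inner product using the two decompositions already derived: $\La W^{-1}\Ra\ci{I}\be=\sigma\ci{I}\ba\ci{I}+\tau\ci{I}\bb\ci{I}$ and, from \eqref{e:sa-tb}, $\La W\wh h\ci{\!J}\Ra\ci{\!J}\La W^{-1}\Ra\ci{\!J}\be=s\ci{\!J}\ba\ci{\!J}-t\ci{\!J}\bb\ci{\!J}$. This yields four cross terms:
\[
\sigma\ci I s\ci{\!J}(\ba\ci I,\ba\ci{\!J})\ci{\R^2}
\;-\;\sigma\ci I t\ci{\!J}(\ba\ci I,\bb\ci{\!J})\ci{\R^2}
\;+\;\tau\ci I s\ci{\!J}(\bb\ci I,\ba\ci{\!J})\ci{\R^2}
\;-\;\tau\ci I t\ci{\!J}(\bb\ci I,\bb\ci{\!J})\ci{\R^2}.
\]
The second of these is precisely the term analyzed in the paragraph immediately preceding the lemma, where it is shown to equal $-\alpha_n t_k\bigl[(\ba\ci I,\be)\ci{\R^2}^2(\ba\ci I,\bb\ci{\!J})\ci{\R^2}+O\ci{\delta_0<}(\delta_n^2)\bigr]$. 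So the task reduces to verifying that each of the remaining three cross terms is $\alpha_n t_k\cdot O\ci{\delta_0<}(\delta_n^2)$.

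For this I will combine the size estimates already in hand: $|\sigma\ci I|\le\alpha_n|\be|\lesssim \alpha_n$, $|\tau\ci I|\le\beta_n|\be|\lesssim \beta_n$, $|s\ci{\!J}|\underset{\delta_0<}{\lesssim}\bcQ\delta_k$, $|t\ci{\!J}|\le|\be|\,t_k\lesssim t_k$, together with Lemma \ref{l:thetas}, which (since $\bb\ci I\perp\ba\ci I$) gives $|(\bb\ci I,\ba\ci{\!J})\ci{\R^2}|\le 3\delta_n$. For the normalization, \eqref{e:t_k} and the identity $\alpha_k^\#\beta_k=\bcQ$ yield $t_k\underset{\delta_0<}{\sim}\delta_k\alpha_k\alpha_k^\#=\delta_k\bcQ E_k$, where $E_k:=\alpha_k/\beta_k$; thus $\alpha_n t_k\underset{\delta_0<}{\asymp}\alpha_n\delta_k\bcQ E_k$. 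Moreover, \eqref{e:alpha_n growth}--\eqref{e:beta_n decrease} imply that the eccentricity $E_k$ is strictly increasing in $k$, so $E_k\ge E_n$, while by construction $\delta_n^2=q^2/E_n$.

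Plugging these in, the three remaining cross terms, divided by $\alpha_n t_k$, are bounded respectively by $1/E_k\le 1/E_n$, by $\delta_n/(E_n E_k)\le \delta_n/E_n$, and by $\beta_n/\alpha_n=1/E_n$. Each of these is $\lesssim \delta_n^2$ (with the first and third carrying the harmless factor $q^{-2}$), so together with the already-analyzed main term this gives \eqref{e:term}. The only real obstacle is the bookkeeping of the normalizations --- in particular recognizing that $\alpha_k\alpha_k^\#=\bcQ E_k$ absorbs exactly the $\bcQ\delta_k$ appearing in the bound for $|s\ci{\!J}|$ and the $t_k$ coming from $|t\ci{\!J}|$, so that all three error contributions fall inside the $O\ci{\delta_0<}(\delta_n^2)$ bracket --- after which the estimates themselves are one-liners.
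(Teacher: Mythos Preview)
Your proof is correct and follows essentially the same approach as the paper's: expand the inner product into the four cross terms $\sigma\ci I s\ci{\!J}(\ba\ci I,\ba\ci{\!J})$, $-\sigma\ci I t\ci{\!J}(\ba\ci I,\bb\ci{\!J})$, $\tau\ci I s\ci{\!J}(\bb\ci I,\ba\ci{\!J})$, $-\tau\ci I t\ci{\!J}(\bb\ci I,\bb\ci{\!J})$, identify the second as the main term already computed, and show the remaining three are each $\alpha_n t_k\cdot O\ci{\delta_0<}(\delta_n^2)$. The only cosmetic difference is that you organize the normalizations via the eccentricity $E_k=\alpha_k/\beta_k$ (writing $\alpha_k\alpha_k^\#=\bcQ E_k$, $\delta_n^2=q^2/E_n$, $E_k\ge E_n$), whereas the paper computes each ratio directly; for the $\tau\ci I s\ci{\!J}$ term the paper instead uses $|s\ci{\!J}|\underset{\delta_0<}{\lesssim}t_k$ and drops the angle factor, but your route via $|(\bb\ci I,\ba\ci{\!J})|\le 3\delta_n$ works equally well.
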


\begin{proof}
We have already shown above that $\sigma\ci{I} t\ci{\!J} (\ba\ci{I}, \bb\ci{\!J})\ci{\R^2}$ gives 
us 
the desired form (the minus sign  %``$-$'' 
here is because of the minus in front of $t\ci{\!J}$ in \eqref{e:sa-tb}). As we claimed before, 
that is 
the main term, and we just need to show that the sum of the 
other three inner products are estimated by $C\alpha_n t_k \delta_n^2$. 

We have
\[
|\sigma\ci{I} s\ci{\!J}(\ba\ci{I}, \ba\ci{\!J})\ci{\R^2}| \le %\underset{\delta_0<}{\lesssim} 
\bcQ\alpha_n 
\delta_k \underset{\delta_0<} 
\asymp \alpha_n t_k \delta_k^2 
\le \alpha_n t_k \delta_n^2 ; 
\]
here in the last inequality we used the fact that $\delta_k$ is decreasing,   and the equivalence 
follows from   the fact  that 
\[
t_k \delta_k^2 \underset{\delta_0<}{\sim} \delta_k  \alpha_k^\# \alpha_k \delta_k^2  =  \delta_k 
\alpha_k\alpha_k^\# q^2\beta_k /\alpha_k = 
q^2\delta_k\bcQ \underset{\delta_0<} \asymp \delta_k\bcQ. 
\]

On the other hand, trivially  
\[
| \tau\ci{I} t\ci{\!J} (\bb\ci{I}, \bb\ci{\!J})\ci{\R^2} | \le \beta_n t_k =q^{-2}
\alpha_n \delta_n^2 t_k.
\]
Finally $|s\ci{\!J}| \underset{\delta_0<}{\lesssim}   t_k$ (because $\beta_k < \alpha_k$), so we 
can 
estimate 
\[
\left| \tau\ci{I} s\ci{\!J} (\bb\ci{I}, \ba\ci{\!J})\ci{\R^2}  \right| 
\underset{\delta_0<}{\lesssim} \beta_n |s\ci{\!J} | \le \beta_n t_k =\alpha_n \delta_n^2 t_k. 
\]
Summing everything up, we get the desired estimate. 
\end{proof}

We claim that the term $\alpha_n t_k (\ba\ci{I}, \be)\ci{\R^2}^2 (\ba\ci{I}, 
\bb\ci{\!J})\ci{\R^2}$  is the main part of the term 
\begin{align*}
\left( \La W^{-1} \Ra\ci{I}  \be,  \La W \wh 
 h\ci{\!J}\Ra\ci{\!J}  \La W^{-1} \Ra\ci{\!J} \right)\ci{\R^2}, 
\end{align*}
 meaning that after summation (with 
 weights $|J|$) 
it gives us the lower bound $\underset{\delta_0<}{\gtrsim} \bcQ^3$, while the rest gives a smaller 
 contribution $\underset{\delta_0<}{\lesssim}\bcQ^2$.

\subsection{Final estimates}\label{finalestimates} Let us estimate the inner  sum in \eqref{e:S2}. 
Consider the sum 
over the set $\cS_+^k(I) = \cS_+(I)\cap \cS_k$, 
\begin{align*}
\sum_{J\in\cS_+^k(I) }  & \left( \La W^{-1} \Ra\ci{I}  \be,  \La W \wh 
 h\ci{\!J}\Ra\ci{\!J}  \La W^{-1} \Ra\ci{\!J} \be \right)\ci{\R^2} |J| \\
&= \sum_{J\in\cS_+^k(I) } - \alpha_n t_k \left[ (\ba\ci{I}, \be)\ci{\R^2}^2 (\ba\ci{I}, 
\bb\ci{\!J})\ci{\R^2}  + O\ci{\delta_0<}(\delta_n^2) \right] |J| .
\end{align*}
Let us first estimate the main part 
\begin{align}\label{e:main part est 01}
\sum_{J\in\cS_+^k(I) } - \alpha_n t_k  (\ba\ci{I}, \be)\ci{\R^2}^2 (\ba\ci{I}, 
\bb\ci{\!J})\ci{\R^2} |J| .
\end{align}

Note that only the  factor 
$(\ba\ci{I}, \bb\ci{\!J})\ci{\R^2}$ depends on $J$, so the total sum 
equals
\begin{align*}
- 2^{-n-1-k} \alpha_n t_k (\ba\ci{I}, \be)\ci{\R^2}^2 (\ba\ci{I}, \overline\bb\ci{I}^k)\ci{\R^2},
\end{align*}
where $\overline\bb\ci{I}^k$ is the average of the vectors $\bb\ci{\!J}$, $J\in\cS_+^k(I)$. There 
are $2^{k-n-1}$ intervals in $\cS_+^k(I)$ and the length of each interval $4^{-k}$, 
which explains the factor $2^{-n-1-k}$.

To find the average $\overline\bb\ci{I}^k$  let us recall how the vectors $\bb\ci{\!J}$, 
$J\in\cS_+^k(I)$ are constructed.  The set $\cS_+^{n+1}(I)$ consists of one interval $J=I_{++}$ and 
the vector $\bb\ci{\!J}$ is the rotation of the vector $\bb\ci{I}$ through the angle $\theta_n$. 
The set $\cS_+^{n+2}(I)$ consists of the  grandchildren $J_{++}$ and $J_{-+}$ of $J=I_{++}$, and 
the 
corresponding vectors are the rotations of $\bb\ci{\!J}$ through the angles $\pm\theta_{n+1}$ 
respectively, and so on. 

Therefore, it is clear that the average $\overline\bb\ci{I}^k$ must be a multiple of the vector 
$\bb\ci{I_{++}}$. By Lemma \ref{l:thetas}  the angle between $\bb\ci{I}$ and $\bb\ci{\!J}$ cannot 
be more than $3\theta_n<3\theta_0$, so for sufficiently small $\delta_0$ the vectors $\bb\ci{\!J}$, 
$J\in\cS_+^k(I)$  are almost parallel. Thus we can conclude that $\overline\bb\ci{I}^k = a_k 
\bb\ci{I_{++}}$, where $0.9\le a_k \le 1$. 
By the construction 
\begin{align*}
\bb\ci{I_{++}} = \bb\ci{I_+}= -(\sin \theta_n) \ba\ci{I} + (\cos \theta_n) \bb\ci{I}, 
\end{align*}
so
\begin{align}\label{e:pos angle -01}
- (\ba\ci{I}, \overline\bb\ci{I}^k)\ci{\R^2} & = a_k \sin \theta_n \underset{\delta_0<}{\ge} 0.8 
\delta_n
\end{align}
(note that the signs worked out). Therefore, for all sufficiently small $\delta_0$, we can estimate
\begin{align}\notag
\sum_{J\in\cS_+^k(I) } - \alpha_n t_k  (\ba\ci{I}, \be)\ci{\R^2}^2 & (\ba\ci{I}, 
\bb\ci{\!J})\ci{\R^2} |J| 
\\ \label{e:LB 02}
& \ge 0.8 \cdot 2^{-n-1-k} \alpha_n t_k (\ba\ci{I}, \be)\ci{\R^2}^2 
\delta_n 
\ge  0.7 \cdot 2^{-n-1-k} \alpha_n t_k \delta_n ;
\end{align}
here in the last inequality we used the fact that $(\ba\ci{I}, \be) \underset{\delta_0<}{\ge} 0.9 
(\ba\ci{I^0}, \be) = 0.9 $. 

\begin{rem}
One can see from the above reasoning that for half of the intervals  $J\in \cS_+^k(I)$
there holds 
$-(\ba\ci{I}, \bb\ci{\!J})\ci{\R^2}\ge  -(\ba\ci{I}, \bb\ci{I_{++}})\ci{\R^2}$. 

If all inner products $(\ba\ci{I}, \bb\ci{\!J})\ci{\R^2}$ had the same sign ($\le0$) for all $J\in 
\cS_+^k(I)$,  
%(resp.~for all $J\in \cS_-^k(I)$), 
one could use just this half of the intervals to get the estimate \eqref{e:LB 02} (with a smaller 
constant, which does not really matter).

Unfortunately, for $J\in\cS_{+}^k$ with sufficiently large $k$, not all inner products $(\ba\ci{I}, 
\bb\ci{\!J})\ci{\R^2}$ have the same sign. Namely, see \eqref{e:theta quotient} we can only 
guarantee that
$\theta_{j+1} \le \theta_j/r$ and we know that $1/r>1/2$.  Moreover we know, see Remark 
\ref{r:asympt qoutients}, that $\delta_{n+1}  \underset{\delta_0<}{\sim} \delta_n/r$, and since 
$\theta_n\underset{\delta_0<}{\sim}\delta$, we conclude that there exists $q'>0$, $1/r> q'>1/2$ 
such 
that for all sufficiently small $\delta_0$ 
\begin{align*}
\theta_{j+1}\ge q' \theta_j. 
\end{align*}
Therefore, since $q'>1/2$, we have for all sufficiently small $\delta_0$ that
\begin{align*}
\sum_{j>n} \theta_j >\theta_n.  
\end{align*}
Recall that a 
vector $\bb\ci{\!J}$ is obtained from $\bb\ci{I}$ by rotating it trough angle $\theta_n$, and then 
through angles $\pm\theta_j$, $n<j<k$. 
So, by picking sufficiently large $k$ and all the rotations except the first one to be through 
$-\theta_j$, $j>n$, we get that for all sufficiently small $\delta_0$ the corresponding vector 
$\bb\ci{\!J}$ is obtained from $\bb\ci{I}$ by a \emph{negative} rotation, so $-(\ba\ci{I}, 
\bb\ci{\!J})\ci{\R^2}<0$. 

Thus, a trivial estimate does not work, and we need a bit more involved reasoning; one possible 
reasoning using the average of the vectors $\bb\ci{\!J}$ 
was presented above. 
\end{rem}

Continuing with estimating \eqref{e:main part est 01},  let us estimate the non-essential parts. 
Trivially 
\begin{align*}
\left|\sum_{J\in\cS_+^k(I) } \alpha_n t_k  O\ci{\delta_0<}(\delta_n^2) |J| \right| \le C
2^{-n-k} \alpha_n t_k \delta_n^2   %\le 2^{-n-1-k} 3\bcQ \alpha_n t_k \delta_0^2      
\end{align*}
for all sufficiently small $\delta_0$.

Since $\delta_n\le \delta_0$, we have that  for all sufficiently small 
$\delta_0$
\begin{align*}
\left|\sum_{J\in\cS_+^k(I) } \alpha_n t_k  O\ci{\delta_0<}(\delta_n^2) |J| \right| \le 0.2 
\cdot 2^{-n-1-k} \alpha_n t_k \delta_n , 
\end{align*}
and we can conclude that 
\begin{align*}
\sum_{J\in\cS_+^k(I) }  & \left( \La W^{-1} \Ra\ci{I}  \be,  \La W \wh 
 h\ci{\!J}\Ra\ci{\!J}  \La W^{-1} \Ra\ci{\!J} \be \right)\ci{\R^2} |J| \ge
0.5\cdot 2^{-n-1-k} \alpha_n t_k \delta_n. 
\end{align*}

The sum over $\cS_-^k (I) = \cS_-(I) \cap \cS_k $ is estimated exactly the same. The difference is 
that in this case the average $\overline\bb\ci{I}^k=a_k \bb\ci{I_{-+}}$, and $\bb\ci{I_{-+}}$ is 
the vector  
$\bb\ci{I}$ rotated through the angle $-\theta_n$, so 
\begin{align}\label{e:pos angle}
(\ba\ci{I}, \overline\bb\ci{I}^k)\ci{\R^2} & = a_k \sin \theta_n \ge 0.7 \delta_n
\end{align}
(for sufficiently small $\delta_0$), i.e.~the inner product is positive, and not negative as 
before. But we also have ``$-$'' in front of the sum, so we have the same estimate
\begin{align*}
-\sum_{J\in\cS_-^k(I) }  & \left( \La W^{-1} \Ra\ci{I}  \be,  \La W \wh 
 h\ci{\!J}\Ra\ci{\!J}  \La W^{-1} \Ra\ci{\!J} \be \right)\ci{\R^2} |J| \ge
0.5\cdot 2^{-n-1-k} \alpha_n t_k \delta_n   \underset{\delta_0<}{\asymp}  2^{-n-k} \alpha_n \wt t_k 
\delta_n  . 
\end{align*}
Thus for the sum over $\cS_k(I):= \cS(I)\cap \cS_k$ we get 
\begin{align}\label{e:sum4}
\sum_{J\in\cS_k(I) }  & \left( \La W^{-1} \Ra\ci{I}  \be,  \La W \wh 
 h\ci{\!J}\Ra\ci{\!J}  \La W^{-1} \Ra\ci{\!J} \be \right)\ci{\R^2} |J|  
 \underset{\delta_0<}{\gtrsim}
 2^{-n-k} \alpha_n \wt t_k \delta_n. 
\end{align}
Using \eqref{e:alpha_n growth} and formula \eqref{e:t_k} for $\wt t_k$, we see that $\wt t_k \ge 
r^{k-n} \wt t_n$. 
Therefore  
\begin{align*}
\sum_{k> n} 2^{n-k} \wt t_k \ge \sum_{k> n} 2^{n-k} \wt t_n \wt r^{n-k} = \frac{r/2}{1-r/2} \wt t_n 
= (2\bcQ-1)\wt t_n,  
\ge 1.5 \bcQ \wt t_n,  
\end{align*}
so we get from \eqref{e:sum4} that 
\begin{align*}
2\sum_{J\in\cS_{<}(I) } \left( \La W^{-1} \Ra\ci{I}  \be,  \La W \wh 
 h\ci{\!J}\Ra\ci{\!J}  \La W^{-1} \Ra\ci{\!J} \be \right)\ci{\R^2} |J| 
&\underset{\delta_0<}{\gtrsim} 2^{-2n } \bcQ \alpha_n \wt t_n  \delta_n   \\
& %\underset{1.01}\asymp 
\ge  2^{-2n } \bcQ \alpha_n q ({\bcQ \alpha_n^\#\alpha_n})^{1/2} \delta_n  \\
& =  2^{-2n} \bcQ q^2   \alpha_n   ({\bcQ \beta_n^\#\alpha_n})^{1/2} \\
& =  2^{-2n } \bcQ^2 q^2   \alpha_n       .  
\end{align*}

Now, summing the above estimate  over all $I\in\cS_{<}$ (i.e.~over all $n\ge 1$), and recalling 
that 
$\alpha_n\ge r^n \alpha_0$ and that the set $\cS_n$ contains exactly $2^n$ intervals, we get that 
\begin{align*}
\| \Pi (W^{-1}\1\ci{I^0}\be) \|\ci{L^2(W)}^2 \ge {q^2\bcQ^2\alpha_0}  \sum_{n\ge1} (r/2)^n 
%=  \frac{q^2\bcQ^2\alpha_0}{2} 2\bcQ 
=  q^2\bcQ^2(2\bcQ-1)\alpha_0 \ge q^2 \bcQ^3 \alpha_0;
\end{align*}
here we used the fact that $\sum_{n\ge 1} (r/2)^n = 2\bcQ-1$. Recalling that 
\begin{align*}
\|f\|\ci{L^2(W)}^2 = \|W^{-1} f \|\ci{L^2(W)}^2 \asymp \alpha_0
\end{align*}
completes the proof. \hfill \qed

\section{Lower bounds for Haar Shifts}    \label{s:Lower Haar shifts}
In this section we prove the lower bound for the special Haar Shift $\Hdy$,  which will be later 
used to get the lower bound for the Hilbert Transform, i.e.~to prove the main theorem (Theorem 
\ref{MainTheorem1}). 

This special operator $\Hdy$ is defined as a linear combination of classical Haar Shifts, so let us 
recall some definitions. 
The classical Haar Shift $\ShaUp$
is  defined by 
\begin{align*}
\ShaUp f = \sum_{I\in\cD} (f, h\ci I)\ci{L^2} \left[h\ci{I_+}-h\ci{I_-}\right].
\end{align*}
The importance of the Haar Shift comes from the fact that its average over dyadic grids yields a 
non-zero multiple of the Hilbert transform. 

In our construction we will use the \emph{odd} Haar Shift $\sh$, where the summation is taken over 
the set 
$\cD\ti{odd}= \bigcup_{k} \cD_{2k+1}$: 
\begin{align*}
\sh f = \sum_{I\in\cD\ti{odd}} (f, h\ci I)\ci{L^2} \left[h\ci{I_+}-h\ci{I_-}\right].
\end{align*}

We will also need the \emph{odd} modification $\sh_0$ of the operator $\ShaUp_0$  introduced in 
\cite{DPlower} %(it was called )
\begin{align*}
 \ShaUp_0 f %: h \ci{I_+} \mapsto h \ci{I_-},\qquad h \ci{I_-} \mapsto - h \ci{I_+}:
&= \sum_{I\in\cD} \left[ (f, h\ci{I_+})\ci{L^2} h\ci{I_-} - (f, 
h\ci{I_-})\ci{L^2} h\ci{I_+}\right], \\
\sh_0 f &= \sum_{I\in\cD\ti{odd}} \left[ (f, h\ci{I_+})\ci{L^2} h\ci{I_-} - (f, 
h\ci{I_-})\ci{L^2} h\ci{I_+}\right].
\end{align*}

On vector-valued functions these operators are defined by the same formulas, if we, sightly abusing 
notation, treat  $(f, h\ci I)\ci{L^2}$ as the vector 
\begin{align}\label{e:abuse IP}
(f, h\ci I)\ci{L^2} :=  |I|^{1/2} \La f h\ci{I} \Ra\ci{I}. 
\end{align}

After the remodeling described below in Section \ref{s:remodeling}, the lower bound for the Hilbert 
transform (for the remodeled weight) will be obtained  
from the lower bound for the dyadic operator
\[
\Hdy = c_1 (\sh - \sh^*) + c_2\shst; \qquad c_1%, c_2, 
\ne 0.
\]
with $c_1$, $c_2$ given by the identities \eqref{e:c12} below.

We will use the lemma below with particular $c_1$, $c_2$ given by  \eqref{e:c12}, but it  
remains true for any $c_1\ne0$, $c_2$ (with implied constants depending on $c_1$, $c_2$). 

\begin{lm}\label{l:Lower bound for T}
For  the matrix weight $W$ we constructed in Section \ref{s:constr} and $f = 
W^{-1}\1\ci{I^0}\be $  there holds 
%\tup(for fixed $c_1$, $c_2$\tup)
\[
\|\Hdy f\|\ci{L^2(W)}   \underset{\delta_0<}{\gtrsim} \bcQ^{3/2} 
\|f\|\ci{L^2(W)}.
\]
\end{lm}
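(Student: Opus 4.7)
The strategy is to reduce the lower bound for $\Hdy$ to Lemma~\ref{l:para lower bound}, by showing that the Haar coefficient of $\Hdy f$ at $h\ci I$, for $I\in\cS_{<}$, matches that of a scalar multiple of $\Pi f$ up to a controllable remainder.

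I would first compute the Haar coefficients of $f = W^{-1}\1\ci{I^0}\be$ at odd levels. For $J\in\cD_{2n-1}$ whose right child $J_+ =: I$ lies in $\cS_n$ (so that $J_-$ is terminal), the identity $\La W^{-1}\Ra\ci{J_+}+\La W^{-1}\Ra\ci{J_-}=2\La W^{-1}\Ra\ci J$ combined with the stretching formulas \eqref{e:wt alpha to alpha} gives
\[
(f,h\ci J) = |J|^{1/2}(r-1)\alpha_{n-1}(\ba\ci J,\be)\ba\ci J + O\ci{\delta_0<}(|J|^{1/2}\beta_{n-1}),
\]
since $\alpha_n-\wt\alpha_{n-1}=(r-1)\wt\alpha_{n-1}$ dominates while $\beta_n-\wt\beta_{n-1}\underset{\delta_0<}{\asymp}\beta_{n-1}$ is negligible. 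Using $\ba\ci I=\ba\ci J$ and $\alpha_n\underset{\delta_0<}{\sim}r\alpha_{n-1}$, this vector equals $\kappa|I|^{1/2}$ times the $\ba\ci I$-component of $\La f\Ra\ci I$, where $\kappa := \sqrt 2(r-1)/r$.

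Next, a combinatorial observation: by the construction in Section~\ref{ss:complete construction}, every $I\in\cS_{<}$ is of the form $J_+$ for its (odd-level) parent $J$, since $\cS_{n+1}$ consists only of intervals $I'_{++}$ and $I'_{-+}$ for $I'\in\cS_n$. Expanding $\Hdy f = c_1(\sh-\sh^*)f + c_2\shst f$ and reading the Haar coefficient at $h\ci I$ for $I\in\cS_{<}$: the $\sh$-part equals $+c_1(f,h\ci J)$ (because $I = J_+$); the $\sh^*$-part vanishes, since $\sh^*f$ is supported at odd-level Haar functions while $I$ sits at an even level; the $\shst$-part equals $-c_2(f,h\ci{\wt I})$, where $\wt I := J_-$ is the terminal sibling of $I$. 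By Step~1 the $\sh$-part is $c_1\kappa|I|^{1/2}\alpha_n(\ba\ci I,\be)\ba\ci I + O\ci{\delta_0<}(\ldots)$, which is precisely the $\ba\ci I$-component of the $h\ci I$-coefficient of $c_1\kappa\,\Pi f$.

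Setting $M := c_1\kappa\sum\ci{I\in\cS_{<}}|I|^{1/2}\alpha_n(\ba\ci I,\be)\ba\ci I\,h\ci I$ and $R := \Hdy f - M$, an inspection of the proof of Lemma~\ref{l:para lower bound}---whose lower bound is driven entirely by the $\ba\ci I$-components of $\La f\Ra\ci I$---yields $\|M\|\ci{L^2(W)}\underset{\delta_0<}{\gtrsim}\bcQ^{3/2}\|f\|\ci{L^2(W)}$. The remainder $R$ collects: the $\shst$-contribution (of size $|\wt I|^{1/2}\alpha_{n-1}/\bcQ$ per interval, i.e., smaller than the main term by a factor of $\bcQ$); the $\bb\ci I$-subleading components from Step~1 (smaller by $\beta_n/\alpha_n = q^{-2}\delta_n^2$); and Haar coefficients of $\Hdy f$ at all other (odd-level and terminal) locations, controlled by standard $L^2(W)$ upper bounds for $\sh$ and $\shst$. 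Together these give $\|R\|\ci{L^2(W)}\underset{\delta_0<}{\lesssim}\bcQ\,\|f\|\ci{L^2(W)}$, from which the lemma follows. The main obstacle is rigorously verifying the subdominance of each piece of $R$---especially the terminal-interval $\shst$-term---where the precise choice \eqref{e:c12} of $c_1,c_2$ is presumably calibrated exactly to preclude any cancellation with $M$ at order $\bcQ^{3/2}$.
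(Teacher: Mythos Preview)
Your overall strategy---isolate a $\Pi$-like main term $M$ in the Haar expansion of $\Hdy f$ and bound the remainder $R=\Hdy f-M$ by $O(\bcQ)$---is the right idea, but the remainder bound has two genuine gaps.

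First, the appeal to ``standard $L^2(W)$ upper bounds for $\sh$ and $\shst$'' to control the terminal-interval and odd-level pieces of $R$ does not give $\bcQ$: the known matrix-weighted bound for Haar shifts is $\bcQ^{3/2}$, which is useless here. The coefficient of $\Hdy f$ at $h\ci{L(I)}$ is $-c_1(f,h\ci{\wh I})+c_2(f,h\ci I)$, whose dominant part $-c_1(f,h\ci{\wh I})$ has exactly the same size $|I|^{1/2}\alpha_n$ as the main term at $h\ci I$. The reason this piece is nevertheless $O(\bcQ)$ in $L^2(W)$ is \emph{structural}: the terminal intervals $L(I)$ are pairwise disjoint, so the corresponding paraproduct-type operators ($\Pi_1,\Pi_2,\Pi_3$ and the sparse left shift in the paper's Lemma~\ref{l:ignored parts}) enjoy a linear bound via the square function estimate of Lemma~\ref{l:SqFnEst}. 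This disjointness argument is essential and does not follow from any general shift bound.

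Second, and more seriously, your remainder $R$ contains the entire $-c_1\sh^*f$ contribution (at odd levels), which after the reduction to sparse operators is essentially $-2^{-1/2}c_1\Pi^*f$. You give no argument that $\|\Pi^*f\|\ci{L^2(W)}\lesssim\bcQ\|f\|$, and indeed the paper does not claim this. Instead the paper keeps $\Pi^*f$ on the main side: it reduces to showing $\|(\Pi-\Pi^*)f\|\ci{L^2(W)}\gtrsim\bcQ^{3/2}\|f\|$, and since only the lower bound for $\|\Pi f\|$ is available, it proves the additional sign estimate $(\Pi f,\Pi^*f)\ci{L^2(W)}\le 0$ via a term-by-term analysis of the off-diagonal sums \eqref{e:Pi f, Pi*f 01}--\eqref{e:Pi f, Pi*f 04}. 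Your decomposition, by placing $\Pi^*f$ in $R$, cannot close without either this sign analysis or a separate (unproved) bound on $\|\Pi^*f\|$. Finally, the choice of $c_1,c_2$ in \eqref{e:c12} is forced by the remodeling construction (Lemma~\ref{l:HTvsT}), not by any cancellation requirement in the remainder.
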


\subsection{Proving Lemma \ref{l:Lower bound for T}} 
In this section we always denote $f:=\1\ci{I^0}W^{-1}\be$.

\subsubsection{Introducing sparse counterparts}
We first notice that for each stopping interval $I\in\cS$ its 
left sibling $L(I)$ is a terminal interval, so the weight $W$ is constant on children of $L(I)$, 
and so is the function $f$. Therefore, in the representation $\Hdy f$ a lot of terms are $0$, so we 
can replace $\Hdy$ in  $\Hdy f$ by its sparse counterpart $\Hdy_{\cS}$. 

Namely, for $I\in\cS_{<}$ let $\wh I$ be its dyadic parent. Then, defining sparse counterparts 
$\sh\ci{\cS}$, $\sh\ci{\cS}^0$ and $\Hdy_\cS$ of $\sh$, $\sh_0$ and $\Hdy$  respectively by 
\begin{align*}
\sh\ci{\cS} f  &:= \sum_{I\in\cS_{<}} (f, h\ci{\wh I})\ci{L^2} \left[h\ci{\wh I_+}-h\ci{\wh 
I_-}\right], 
\\  
\sh\ci{\cS}^0 f &:= \sum_{I\in\cS_{<}} \left[ (f, h\ci{\wh I_+})\ci{L^2} h\ci{\wh I_-} - (f, 
h\ci{\wh I_-})\ci{L^2} h\ci{\wh I_+}\right], \\
\Hdy_\cS & := c_1 (\sh\ci{\cS} - \sh\ci{\cS}^*) + c_2\sh\ci{\cS}^0, 
\end{align*}
we can see that for $f=W^{-1}\1\ci{I^0}\be$ we have
\begin{align*}
\Hdy f & = \Hdy_\cS f. 
\end{align*}

The operator $\sh\ci{\cS}$ can be represented as a linear combination of paraproducts
\begin{align*}
\sh\ci{\cS} = 2^{-1/2}\left(\Pi  + \Pi_1  - \Pi_2 - \Pi_3\right) 
\end{align*}
where $\Pi$ is the paraproduct introduced before in Section \ref{s:est-para},  and 
\begin{align*}
\Pi_1 g  = \sum_{I\in\cS_{<}} \La g\Ra\ci{L(I)} \wh h\ci{L(I)}, \qquad %\\
\Pi_2 g  = \sum_{I\in\cS_{<}} \La g\Ra\ci{L(I)} \wh h\ci{I} , \qquad 
\Pi_3 g  = \sum_{I\in\cS_{<}} \La g\Ra\ci{I} \wh h\ci{L(I)},
\end{align*}
where, recall, $L(I)$ is the left sibling of $I$.

\subsubsection{Essential part of \tp{$\Hdy f = \Hdy_\cS f$}{Hdy f}}\label{s:ess part}
The following lemma shows that $2^{-1/2}(\Pi-\Pi^*)f$ is the main part of $\Hdy f = \Hdy_\cS f$, 
and the rest can be ignored.   That means the estimate 
\begin{align*}
\|(\Pi -\Pi^*) f\|\ci{L^2(W)}\gtrsim \bcQ^{3/2}\|f\|\ci{L^2(W)}
\end{align*}
would prove Lemma \ref{l:Lower bound for T}.   

\begin{lm}\label{l:ignored parts}
The norms of the operators $\Pi_1$,  $\Pi_2$, $\Pi_3$, $\sh^0_{\cS}$ and of their adjoints  
are estimated above by $C\bcQ$ in $L^2(W)$ { \tup{(}and so in $L^2(W^{-1})$ as 
well\tup{)}}.  
\end{lm}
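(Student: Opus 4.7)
All four operators $\Pi_1,\Pi_2,\Pi_3,\sh^0_\cS$ are (finite linear combinations of) matrix-weighted sparse paraproducts indexed by the $2$-Carleson family $\cS$ (or by the family of terminal intervals $\cT$), for which a linear-in-$\bcQ$ bound in $L^2(W)$ is already available: one only needs the matrix Carleson embedding theorem of \cite{CT} and the paraproduct estimates from \cite{NaPeTrVo}. The plan is, for each of the four operators, to recast it — after elementary algebraic manipulations — as a combination of \emph{standard} paraproducts on $\cS$ (i.e.~with the averaging interval and Haar support equal) and then quote the known estimate. The main obstacle is the \emph{mixed} nature of $\Pi_2,\Pi_3$ and $\sh^0_\cS$: the averaging interval and the interval supporting the Haar function are different, so one cannot directly apply a paraproduct theorem. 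Exactly here the $\bA_2$ hypothesis, through Lemma \ref{l:A2_LMI}, enters to compare $\la W\ra_K$ with $\la W\ra_{K'}$ for $K,K'$ of comparable length and comparable position.

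\textbf{The operator $\Pi_1$.} The intervals $\{L(I):I\in\cS_<\}$ coincide with the terminal family $\cT$ of Section \ref{s:terminal int} and, by the nested structure of $\cS$, are pairwise disjoint. Hence the Haar functions $\wh h_{L(I)}$ have disjoint supports and
\begin{align*}
\|\Pi_1 g\|^2_{L^2(W)} = \sum_{K\in\cT} |K|\,\bigl(\la W\ra_K \la g\ra_K,\,\la g\ra_K\bigr).
\end{align*}
Since the $K\in\cT$ are disjoint they trivially satisfy the Carleson condition with constant $1$, so the matrix Carleson embedding of \cite{CT} bounds the right hand side by $C\bcQ^2\|g\|^2_{L^2(W)}$, giving $\|\Pi_1\|_{L^2(W)\to L^2(W)}\lesssim\bcQ$.

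\textbf{The operators $\Pi_2$ and $\Pi_3$.} Since $L(I)$ and $I$ are the two children of $\wh I$, we have $\la g\ra_{L(I)}=2\la g\ra_{\wh I}-\la g\ra_I$, so
$\Pi_2 g = 2\widetilde\Pi g - \Pi g$, where $\widetilde\Pi g := \sum_{I\in\cS_<}\la g\ra_{\wh I}\wh h_I$. Both $\Pi$ and $\widetilde\Pi$ are standard sparse paraproducts on the $2$-Carleson family $\cS$ and so carry the bound $\lesssim\bcQ$ on $L^2(W)$ by the matrix paraproduct estimate. For $\Pi_3$, disjointness of the $L(I)$'s gives
\begin{align*}
\|\Pi_3 g\|^2_{L^2(W)} = \sum_{I\in\cS_<} |L(I)|\,\bigl(\la W\ra_{L(I)}\la g\ra_I,\,\la g\ra_I\bigr).
\end{align*}
Because $L(I)$ is a sibling of $I$, averaging yields $\la W\ra_{L(I)}\le 2\la W\ra_{\wh I}$ and $\la W^{-1}\ra_{\wh I}\ge \tfrac12 \la W^{-1}\ra_I$; combining these with Lemma \ref{l:A2_LMI} (in the form $\la W\ra_J\le\bcQ\la W^{-1}\ra_J^{-1}$ and $\la W^{-1}\ra^{-1}_J\le\la W\ra_J$) produces the pointwise-in-$I$ operator inequality $\la W\ra_{L(I)}\le 4\bcQ\,\la W\ra_I$. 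Substituting and invoking the matrix Carleson embedding on $\cS$ with weight $W$ closes the estimate: $\|\Pi_3 g\|^2_{L^2(W)}\lesssim \bcQ^2\|g\|^2_{L^2(W)}$.

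\textbf{The Haar shift $\sh^0_\cS$ and the adjoints.} Expanding $(g,h_{I_\pm})$ via \eqref{e:abuse IP} as $|I_\pm|^{1/2}\la g h_{I_\pm}\ra_{I_\pm}$ and then writing $h_{I_\pm}$ as a normalized difference of grandchildren averages, $\sh^0_\cS$ decomposes into a finite linear combination of paraproducts whose building blocks are structurally identical to $\Pi$, $\Pi_2$, and $\Pi_3$, so the preceding estimates apply. For the adjoints, the pairing $\la f,g\ra = \int (f,g)_{\F^d}\,\dd x$ identifies $(L^2(W))^* = L^2(W^{-1})$, and the adjoint of each of our paraproducts is a paraproduct of the same mixed type with the roles of averaging interval and Haar support swapped. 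Since $[W^{-1}]\ci{\bA_2}\ut{dy} = [W]\ci{\bA_2}\ut{dy} = \bcQ$, the arguments above apply verbatim to the adjoints acting on $L^2(W^{-1})$, and by duality this is the bound $\|\Pi_j^*\|_{L^2(W)\to L^2(W)}\lesssim\bcQ$ that we need.
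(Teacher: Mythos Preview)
Your treatment of $\Pi_2$ contains a fatal error. You write $\Pi_2=2\widetilde\Pi-\Pi$ and then assert that ``both $\Pi$ and $\widetilde\Pi$ are standard sparse paraproducts on the $2$-Carleson family $\cS$ and so carry the bound $\lesssim\bcQ$ on $L^2(W)$ by the matrix paraproduct estimate.'' But no such linear matrix paraproduct estimate exists --- that is precisely what this paper disproves. Lemma~\ref{l:para lower bound}, proved immediately above the statement you are trying to establish, shows that for this very weight $W$ one has $\|\Pi\|\ci{L^2(W)\to L^2(W)}\gtrsim\bcQ^{3/2}$. So you have written $\Pi_2$ as a difference of two operators each of size $\gtrsim\bcQ^{3/2}$, which tells you nothing. (Indeed, since $\widetilde\Pi=\tfrac12(\Pi+\Pi_2)$, a linear bound on $\Pi_2$ together with the lower bound on $\Pi$ forces $\|\widetilde\Pi\|\gtrsim\bcQ^{3/2}$ as well.) The references \cite{CT} and \cite{NaPeTrVo} do not help here: the former is a Carleson embedding, not a paraproduct bound, and the latter gives exactly the $\bcQ^{3/2}$ upper bound for sparse operators, not a linear one.

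The paper avoids this trap by never decomposing $\Pi_2$ into nested-Haar paraproducts. Instead it passes to the adjoint: $\Pi_2^*g=\sum_{I\in\cS_<}\la g\,\wh h\ci I\ra\ci I\,\1\ci{L(I)}$ has outputs supported on the \emph{disjoint} terminal intervals $L(I)$, so $\|W^{-1/2}\Pi_2^*W^{1/2}g\|\ci{L^2}$ is a single sum of disjoint pieces, which is then controlled by the square function $S_1=S\ci{1,\wh\cS_<,W^{-1}}$ of Lemma~\ref{l:SqFnEst}. Duality transfers this to $\|\Pi_2\|\ci{L^2(W)\to L^2(W)}\lesssim\bcQ$. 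The same disjointness-of-outputs mechanism handles $\Pi_1$, $\Pi_3$, and the left shift $\sh\ci{L}$ in $\sh^0_\cS=\sh\ci{L}-\sh\ci{L}^*$. Your treatment of $\sh^0_\cS$ inherits the $\Pi_2$ problem if, as you suggest, it reduces to ``building blocks structurally identical to $\Pi,\Pi_2,\Pi_3$.''

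Your arguments for $\Pi_1$ and $\Pi_3$ are essentially correct, though the appeals to the ``matrix Carleson embedding of \cite{CT}'' are unnecessary: for $\Pi_1$ the intervals are disjoint, so $\sum_K|K|(\la W\ra\ci K\la g\ra\ci K,\la g\ra\ci K)\le\bcQ\|g\|^2\ci{L^2(W)}$ follows from the bound $\bcQ^{1/2}$ on the averaging operator; for $\Pi_3$, after your comparison $\la W\ra\ci{L(I)}\le 4\bcQ\la W\ra\ci I$, the remaining sum $\sum_{I\in\cS_<}|I|(\la W\ra\ci I\la g\ra\ci I,\la g\ra\ci I)\lesssim\bcQ\|g\|^2\ci{L^2(W)}$ follows from the same averaging bound plus the $2$-Carleson property of $\cS$.
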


\begin{rem}
\label{r:S lower bound} The above lemma implies immediately that for $f= \1\ci{I^0} W^{-1}\be$ and 
sufficiently large $\bcQ$ one has the lower bound 
\begin{align*}
\| \sh f\|\ci{L^2(W)} \underset{\delta_0<}{\gtrsim} \bcQ^{3/2} \|  f\|\ci{L^2(W)}. 
\end{align*}

Indeed, for this particular $f$  
\begin{align*}
\sh f = \sh\ci{\cS} f = 2^{-1/2}\left(\Pi  + \Pi_1  - \Pi_2 - \Pi_3\right) f, 
\end{align*}
and we have the desired lower bound for $\|\Pi f\|\ci{L^2(W)}$. By Lemma \ref{l:ignored parts} the 
rest is small in comparison, and can be ignored. 
\end{rem}
To prove Lemma \ref{l:ignored parts} we need some results from \cite{NaPeSkTr}. 

\begin{df}\label{df:la carleson}
Recall that a collection $\cC$ of dyadic intervals is called \emph{$\lambda$-Carleson}, 
$\lambda>0$, if for any 
dyadic interval $J$
\begin{align*}
\sum_{I\in \cC(J)} |I| \le \lambda |J|;
\end{align*}
here we use the notation $\cC(J):= \{I\in \cC: I\subset J\}$.
\end{df} 
Note that the collection $\cS$ of 
stopping intervals is trivially $2$-Carleson, and therefore the collection $\wh\cS:=\{\wh I: 
I\in\cS\} $ is $4$-Carleson.

We now restate Lemma 5.4 from \cite{NaPeSkTr} in the way adapted to our purposes. For a collection 
$\cC$ of dyadic intervals define the square function  $S_1 = S_{1,\cC, W}$ as 
\begin{align}\label{e:S_1}
S_1 g (x):= \left(\sum_{I\in\cC}  \left\La  \|W(x)^{1/2} W^{-1/2} g \|\ci{\R^d} \right\Ra_{I}^2 
\1\ci{I} (x) \right)^{1/2}. 
\end{align}
Here $g$ is a function with values in $\R^d$ and $W$ is a $d\times d$ matrix weight.

\begin{lm}\label{l:SqFnEst}
Let $\cC$ be a $\lambda$-Carleson collection of dyadic intervals, and let $W\in \bA_2\ut{dy}$,   
$[W]\ci{\bA_2}\ut{dy} \le\bcQ$. Then for $S_1 = S\ci{1,\cC, W}$ there holds 
\begin{align*}
\| S_1 \|\ci{L^2\to L^2} \le C \lambda d \bcQ
\end{align*}
with an absolute constant $C$. 
\end{lm}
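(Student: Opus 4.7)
The plan is to reduce the square-function estimate to a trace of matrix averages via Jensen's inequality, and then to apply the matrix $\bA_2$ condition together with a Carleson-type embedding.

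First, by the Cauchy--Schwarz (Jensen) inequality applied to the inner Lebesgue average over $t\in I$, for each $x\in I$
\begin{equation*}
\La \|W(x)^{1/2} W^{-1/2} g\|\ci{\R^d}\Ra\ci{I}^{2}
\;\le\; \La \|W(x)^{1/2} W^{-1/2} g\|\ci{\R^d}^{2}\Ra\ci{I}
\;=\; \trace\!\bigl( W(x)\,\La W^{-1/2} g g^{*} W^{-1/2}\Ra\ci{I}\bigr),
\end{equation*}
using $\|W(x)^{1/2} v\|^{2}=\trace(W(x)vv^{*})$. Integrating over $x\in I$, summing over $I\in\cC$, and using the cyclic property of the trace,
\begin{equation*}
\|S_{1} g\|\ci{L^{2}}^{2}
\;\le\; \sum_{I\in\cC} |I|\,\trace\!\bigl(\La W\Ra\ci{I}\,\La W^{-1/2} g g^{*} W^{-1/2}\Ra\ci{I}\bigr)
\;=\; \sum_{I\in\cC}\int_{I} \|\La W\Ra\ci{I}^{1/2} W^{-1/2}(t) g(t)\|^{2}\,dt.
\end{equation*}

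Setting $u(t):=W^{-1/2}(t) g(t)$ so that $\|u\|\ci{L^{2}(W)}=\|g\|\ci{L^{2}}$, I would rewrite the right-hand side by Fubini as $\int \bigl(\sum_{I\in\cC,\, I\ni t}\La W\Ra\ci{I}\,u(t),\,u(t)\bigr)\,dt$, and then apply a matrix-valued Carleson embedding theorem in the spirit of Culiuc--Treil \cite{CT}. The matrix $\bA_{2}$ inequality $\La W\Ra\ci{I}\le\bcQ\,\La W^{-1}\Ra\ci{I}^{-1}$ and the $\lambda$-Carleson condition on $\cC$ together provide the structural input for this embedding; the factor of $d$ arises when converting between trace and operator norms, e.g.\ via $\trace(AB)\le d\|A\|\|B\|$ for positive $A, B$, or from the Hilbert--Schmidt bound $\|A\|\ci{HS}\le\sqrt{d}\,\|A\|$. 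The combined estimate yields $\|S_{1} g\|\ci{L^{2}}^{2}\lesssim (\lambda d\bcQ)^{2}\|g\|\ci{L^{2}}^{2}$.

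The main technical obstacle is that the matrix-valued aggregate $\sum_{I\ni t,\,I\in\cC} \La W\Ra\ci I$ is not controlled pointwise by $W(t)$ (the Carleson multiplicity $N_{\cC}(t)=\#\{I\in\cC:I\ni t\}$ can blow up even when $\cC$ is $\lambda$-Carleson), so the quadratic-form bound must be proved in an integrated (Carleson) sense rather than pointwise. The natural way around this is to use the matrix $\bA_{\infty}$ consequences of matrix $\bA_{2}$ -- which give that $\{|I|\La W\Ra\ci{I}\}\ci{I\in\cC}$ is a matrix-Carleson sequence with constant $\lesssim \lambda\bcQ$ -- or, equivalently, to decompose the $\lambda$-Carleson family $\cC$ into $\asymp\lambda$ classical sparse sub-families, prove the estimate for each with constant $C d\bcQ$, and sum. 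Tracking the constants through either reduction produces the claimed bound $\|S_{1}\|\ci{L^{2}\to L^{2}}\le C\lambda d\bcQ$.
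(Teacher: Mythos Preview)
The paper does not give a self-contained proof of this lemma; it simply records that the statement is Lemma~5.4 of \cite{NaPeSkTr} (applied with the roles of $W$ and $W^{-1}$ swapped), which yields the sharper bound $Cd\lambda\bigl([W^{-1}]\ci{\bA_2}\ut{dy}\bigr)^{1/2}\bigl([W]\ci{\bA_\infty}\ut{dy}\bigr)^{1/2}$, and then uses $[W]\ci{\bA_\infty}\ut{dy}\le 4[W]\ci{\bA_2}\ut{dy}$. So there is no ``paper's proof'' to compare line by line; the question is whether your sketch actually closes the argument.

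Your first reduction is correct: Jensen gives
\[
\|S_1 g\|\ci{L^2}^2 \le \sum_{I\in\cC}\int_I\bigl(\La W\Ra\ci I\, u(t),u(t)\bigr)\,dt,\qquad u:=W^{-1/2}g,\quad \|u\|\ci{L^2(W)}=\|g\|\ci{L^2}.
\]
But from here the sketch becomes a promissory note rather than a proof. The Culiuc--Treil embedding \cite{CT} controls sums of the form $\sum_I(A_I\La f\Ra\ci I,\La f\Ra\ci I)$, i.e.\ with \emph{averages} $\La f\Ra\ci I$, not the pointwise values $u(t)$ that appear in your display; so that citation does not apply as written. Your fallback suggestions---use matrix $\bA_\infty$, or split $\cC$ into $O(\lambda)$ sparse sub-families---are exactly the right directions, and indeed the cited result in \cite{NaPeSkTr} is proved via the matrix $\bA_\infty$ machinery. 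However, you do not carry out either route: even for a single sparse family the bound $\sum_{I}\int_I(\La W\Ra\ci I u,u)\lesssim (d\bcQ)^2\|u\|\ci{L^2(W)}^2$ is not obvious (the matrix function $\sum_{I\ni t}\La W\Ra\ci I$ is not dominated by $W(t)$, as you yourself note), and establishing it is precisely where the $\bA_\infty$ input and the factor $d$ enter. As it stands, the hard step has been relocated, not removed; to make this a proof you would need to either reproduce the relevant argument from \cite{NaPeSkTr} or cite it, which is what the paper does.
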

It is essentially Lemma 5.4 from \cite{NaPeSkTr} applied to the weights $W^{-1}$ and $W$ for  $W$ 
and $V$ in  \cite{NaPeSkTr}.  Lemma 5.4 from \cite{NaPeSkTr} was stated for the operator $\wt S_1$ 
that dominates $S_1$.  
The estimate there would translate to $C d \lambda 
\left([W^{-1}]\ci{\bA_2}\ut{dy}\right)^{1/2}\left([W]\ci{\bA_\infty}\ut{dy}\right)^{1/2}$, where 
$[W]\ci{\bA_\infty}\ut{dy}$ is the dyadic $\bA_\infty$ characteristic of the weight $W$. We are not 
giving the definition here, we only need to know that $[W]\ci{\bA_\infty}\ut{dy}\le 4 
[W]\ci{\bA_2}\ut{dy}$, see \cite[Remark 4.4]{NaPeSkTr}.  
Noticing that trivially 
$[W^{-1}]\ci{\bA_2}\ut{dy} = [W]\ci{\bA_2}\ut{dy}$ we get the estimate stated in Lemma  
\ref{l:SqFnEst}. 

\begin{proof}[Proof of Lemma \ref{l:ignored parts}]
Trivially, for an operator $T$ its norm in $L^2(W)$ is the norm of the operator $g\mapsto W^{1/2} T 
W^{-1/2}g $ (slightly abusing the notation  we will say ``of the operator  $W^{1/2} T 
W^{-1/2}$'') in the non-weighted space $L^2$.  Since the intervals $L(I)$, 
$I\in\cS_{<}$ are disjoint, 
we can estimate 
\begin{align*}
\|W^{1/2} \Pi_3 W^{-1/2}g \|\ci{L^2}^2 & =\sum_{I\in\cS_{<}} \int_{L(I)} \|W(x)^{1/2}    
\La W^{-1/2} g \Ra\ci{I}\|^2 \dd x \\
&\le \sum_{I\in\cS_{<}} \int_{L(I)} \La \|W(x)^{1/2}  W^{-1/2} g \| \Ra\ci{I}^2 \dd x \\
&\lesssim \sum_{I\in\cS_{<}} \int_{\wh I} \La \|W(x)^{1/2}    
W^{-1/2} g \|\Ra\ci{\wh I}^2 \dd x \\
&= \left\| S\ci{1,\wh \cS_{<}, W} g \right\|_{L^2}^2
\lesssim \bcQ^2 \| g \|\ci{L^2}^2 , 
\end{align*}
where $\wh\cS_{<}:=\{\wh I: I\in\cS_{<}\}$. 
The crucial moment here is that the intervals $L(I)$, $I\in\cS_{<}$ are disjoint, so we can 
represent 
the square of the norm as a sum of squares.

The estimate for $\Pi_1$ works absolutely the same way: the only difference is that in the first 
and the second lines one should take the averages over $L(I)$ instead ot $I$. But the final 
estimate will be the same.  

As for $\Pi_2$, we can get the estimate by duality. Namely, replacing $W$ by $W^{-1}$ and repeating 
for $\Pi_2^*$ the same calculations as above with obvious 
modifications we get that 
\begin{align*}
\|W^{-1/2} \Pi_2^* W^{1/2}\|\ci{L^2\to L^2}^2 \lesssim \|S\ci{1,\wh\cS_{<}, W^{-1}} \|\ci{L^2\to 
L^2}^2     
 \lesssim \bcQ^2  . 
\end{align*}
Taking the adjoint we get that 
\begin{align*}
\|W^{1/2} \Pi_2 W^{-1/2}\|\ci{L^2\to L^2}  \lesssim \bcQ .
\end{align*}

Finally, to estimate $\sh\ci{\cS}^0$ we represent it as 
\begin{align*}
\sh\ci{\cS}^0 = \sh\ti{L} - \sh\ti{L}^*, 
\end{align*}
where $\sh\ti{L}$ is the \emph{sparse left shift}, 
\begin{align*}
\sh\ti{L} g = \sum_{I\in \cS_{<}}  (g, h\ci{I})\ci{L^2} h\ci{L(I)} . 
\end{align*}
Since the intervals $L(I)$, $I\in\cS_{<}$ are disjoint, using the same estimates as above, we get 
that 
\begin{align*}
\|\sh\ti{L}\|\ci{L^2(W)\to L^2(W)} \lesssim \bcQ. 
\end{align*}
Replacing $W$ by $W^{-1}$ we get that
\begin{align*}
\|\sh\ti{L}\|\ci{L^2(W^{-1})\to L^2(W^{-1})} \lesssim \bcQ,  
\end{align*}
which by duality implies that $\|\sh\ti{L}^*\|\ci{L^2(W)\to L^2(W)} \lesssim \bcQ$. 
\end{proof}

As it was discussed above,  the lower bound for $\Hdy_\cS f$ follows from the lower bound for 
\[
 \| (\Pi 
-\Pi^*)  f \|\ci{L^2(W)}.
\] 

We have the correct lower bound for $\| \Pi  f \|\ci{L^2(W)}$. 
To see that $\| (\Pi-\Pi^{\star})  f \|\ci{L^2(W)}$ also has the correct lower bound, it suffices 
to show that for any sufficiently large $\bcQ$ 
\[
\left( \Pi  f, \Pi^*  f\right)_{L^2(W)} \le 0  
\]
for all sufficiently small $\delta_0$. 

We calculate 
\begin{align}\label{e:Pi f, Pi*f 01}
-\left( \Pi  f, \Pi^*  f\right)_{L^2(W)}=&
-\sum_{I\in \cS_{<}}\left( \La W \wh h\ci{I} \Ra\ci{I}  \La W^{-1} \wh h\ci{I} \Ra\ci{I} \be , \La 
W^{-1} \Ra\ci{I}\be\right)\ci{\R^2}|I|\\ \label{e:Pi f, Pi*f 02}
&+\sum_{I\in \cS_{<}}\sum_{J\in \cS_-(I)}\left( \La W^{-1} \Ra \ci{I}\be,\La W \Ra \ci{\!J} \La 
W^{-1} 
\wh h\ci{\!J} \Ra\ci{\!J}\be \right)\ci{\R^2}|J|\\ \label{e:Pi f, Pi*f 03}
&-\sum_{I\in \cS_{<}}\sum_{J\in \cS_+(I)}\left(\La W^{-1} \Ra \ci{I}\be,\La W \Ra \ci{\!J} \La 
W^{-1} 
\wh h\ci{\!J} \Ra\ci{\!J}\be \right)\ci{\R^2}|J|\\ \label{e:Pi f, Pi*f 04}
&-\sum_{I\in \cS_{<}}\sum_{J\in \cS_{<}(I)}\left(\La W^{-1} \wh h\ci{I} \Ra\ci{I}\be, \La W \wh 
h\ci{\!J} 
\Ra\ci{\!J} \La W^{-1} \Ra \ci{\!J} \be \right)\ci{\R^2}|J|.
\end{align}
Here we expanded $\Pi f$ indexing by $I$ and $\Pi^* f$ by $J$: the first term is the diagonal 
term, the middle two terms correspond to the sum over $J\subsetneq I$ and the last term corresponds 
to $I\subsetneq J$ but we swapped the roles of $I$ and $J$. To address the middle two terms, we 
compute for $J\in\cS_k$ (compare with \eqref{e:Av W h_J})
\begin{align}\label{e:Av W^-1 h_J}
\La W^{-1} \wh h\ci{\!J} \Ra\ci{\!J} = 
\delta_k \frac{\wt\alpha_k - \wt \beta_k}{1+\delta_k^2} \left(\ba\ci{\!J} \bb\ci{\!J} ^*   + 
 \bb\ci{\!J} 
 \ba\ci{\!J} ^* \right) ; 
\end{align}
it is essentially the same calculation as for \eqref{e:Av W h_J}, but with $\alpha_k^\#$, 
$\beta_k^\#$ replaced by $\alpha_k$ $\beta_k$ respectively, and the opposite sign because the 
rotation is through $\theta_k$ instead of $-\theta_k$ in \eqref{e:Av W h_J}. 
Therefore, for $I\in\cS_n$, $J\in\cS_k$, $k>n$,
\[
\La W \Ra \ci{\!J} \La W^{-1} \wh h\ci{\!J} \Ra\ci{\!J}\be = 
s\ci{\!J}'\ba\ci{\!J}+t\ci{\!J}'\bb\ci{\!J},
\]
{where}
\[
s\ci{\!J}'=\beta^\#_k \delta_k 
\frac{\wt\alpha_k-\wt\beta_k}{1+\delta^2_k}(\bb\ci{\!J},\be)\ci{\R^2}
%=: s_k' 
%(\bb\ci{\!J},\be)\ci{\R^2}, 
\qquad  
t\ci{\!J}'=\alpha^\#_k \delta_k 
\frac{\wt\alpha_k-\wt\beta_k}{1+\delta^2_k}(\ba\ci{\!J},\be)\ci{\R^2}
=: t_k' (\ba\ci{\!J},\be)\ci{\R^2}. 
\]
We also can easily see that 
\[
\La W^{-1} \Ra \ci{I}\be =\sigma\ci{I}' \ba \ci{I}+\tau\ci{I}'\bb\ci{I}
\]
{where}
\[
\sigma\ci{I}'=\alpha_n (\ba\ci{I},\be)\ci{\R^2} , \qquad \tau\ci{I}'=\beta_n 
(\bb\ci{I},\be)\ci{\R^2}.
\]

We will need the following analogue of Lemma \ref{l:main term}. 
\begin{lm}
\label{l:main term 01}
Let $J\in\cS_<(I)$, $I\in\cS_n$, $J\in\cS_k$, and let $\bcQ\ge 10$. 
%Then for all sufficiently large 
%values of $E_0$
%%
\begin{align}
\label{e:term 01}
\left( \La W^{-1} \Ra \ci{I}\be,\La W \Ra \ci{\!J} \La W^{-1} 
\wh h\ci{\!J} \Ra\ci{\!J}\be \right)\ci{\R^2}   
=  \alpha_n t_k' \left[ (\ba\ci{I}, \be)\ci{\R^2}^2 (\ba\ci{I}, \bb\ci{\!J})\ci{\R^2}  + 
O\ci{\delta_0<}(\delta_n^2)\right] 
\end{align}
\tup{(}with implied constants not depending on $n$, $k$, $I$, $J$, $\bcQ$\tup{)}.
\end{lm}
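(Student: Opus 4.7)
The plan is to imitate the proof of Lemma \ref{l:main term} almost verbatim, since formula \eqref{e:Av W^-1 h_J} for $\La W^{-1} \wh h\ci{\!J}\Ra\ci{\!J}$ has exactly the same rank-two structure as formula \eqref{e:Av W h_J} for $\La W \wh h\ci{\!J}\Ra\ci{\!J}$, only with $\alpha_k^\#, \beta_k^\#$ swapped for $\alpha_k, \beta_k$ and with an overall sign flip already absorbed into the positive quantity $t_k'$. First I will expand
\[
\left( \La W^{-1} \Ra \ci{I}\be,\La W \Ra \ci{\!J} \La W^{-1} \wh h\ci{\!J} \Ra\ci{\!J}\be \right)\ci{\R^2}
\]
into the four inner products $\sigma\ci{I}' s\ci{\!J}' (\ba\ci{I}, \ba\ci{\!J})$, $\sigma\ci{I}' t\ci{\!J}' (\ba\ci{I}, \bb\ci{\!J})$, $\tau\ci{I}' s\ci{\!J}' (\bb\ci{I}, \ba\ci{\!J})$ and $\tau\ci{I}' t\ci{\!J}' (\bb\ci{I}, \bb\ci{\!J})$, using the decompositions stated just before the lemma.

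The main contribution is the second term, $\sigma\ci{I}' t\ci{\!J}' (\ba\ci{I}, \bb\ci{\!J}) = \alpha_n t_k' (\ba\ci{I},\be)\ci{\R^2} (\ba\ci{\!J},\be)\ci{\R^2} (\ba\ci{I}, \bb\ci{\!J})\ci{\R^2}$. Invoking Lemma \ref{l:thetas} exactly as in the proof of Lemma \ref{l:main term} to replace $(\ba\ci{\!J},\be)\ci{\R^2} = (\ba\ci{I},\be)\ci{\R^2} + O\ci{\delta_0<}(\delta_n)$ and to bound $|(\ba\ci{I}, \bb\ci{\!J})\ci{\R^2}| \le 3\delta_n$, this rewrites as $\alpha_n t_k' [(\ba\ci{I},\be)\ci{\R^2}^2 (\ba\ci{I}, \bb\ci{\!J})\ci{\R^2} + O\ci{\delta_0<}(\delta_n^2)]$. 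Notice the overall ``$+$'' sign in \eqref{e:term 01}, in contrast to the ``$-$'' in \eqref{e:term}: this sign comes precisely from the opposite sign in \eqref{e:Av W^-1 h_J} relative to \eqref{e:Av W h_J}, which reflects the fact that the rotation producing $\ba\ci{\!J_\pm}, \bb\ci{\!J_\pm}$ is through $\pm\theta_k$ applied to different eigen-directions in $W$ versus $W^{-1}$.

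To bound the three residual inner products by $C\alpha_n t_k' \delta_n^2$, the key observation is that $t_k' \underset{\delta_0<}{\sim} \delta_k \alpha_k \alpha_k^\#$ has the same asymptotic form as $t_k$, so the identity $t_k'\delta_k^2 \underset{\delta_0<}{\asymp} q^2 \delta_k \bcQ$ used in Lemma \ref{l:main term} carries over. Combining this with the bounds $|s\ci{\!J}'| \underset{\delta_0<}{\lesssim} \beta_k^\# \alpha_k \delta_k = \delta_k \bcQ$ and $|s\ci{\!J}'| \underset{\delta_0<}{\lesssim} t_k'$ (the latter because $\beta_k^\# < \alpha_k^\#$, which follows from $\alpha_k^\#/\beta_k^\# = \alpha_k/\beta_k \gg 1$), together with $|\tau\ci{I}'| \le \beta_n = q^{-2}\alpha_n \delta_n^2$ and the monotonicity $\delta_k \le \delta_n$, I conclude that each of the three off-diagonal terms is bounded by $C \alpha_n t_k' \delta_n^2$ uniformly in $n, k, I, J, \bcQ$ for all sufficiently small $\delta_0$.

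The only ``obstacle'' is bookkeeping: verifying that the arithmetic identities underlying the proof of Lemma \ref{l:main term} survive the swap $\alpha_k^\# \leftrightarrow \alpha_k$, $\beta_k^\# \leftrightarrow \beta_k$. Since this swap leaves invariant both the products $\alpha_k \beta_k^\# = \alpha_k^\#\beta_k = \bcQ$ and the ratios $\delta_k^2 = q^2\beta_k/\alpha_k = q^2\beta_k^\#/\alpha_k^\#$, every estimate in the proof of Lemma \ref{l:main term} transfers without modification, and no genuinely new idea is needed.
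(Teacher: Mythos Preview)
Your proposal is correct and follows exactly the approach the paper intends: the paper itself writes ``The proof is exactly the same as the proof of Lemma \ref{l:main term}, so we skip it,'' and you have carried out precisely that transfer, correctly noting that the swap $(\alpha_k,\beta_k)\leftrightarrow(\alpha_k^\#,\beta_k^\#)$ preserves the identities $\alpha_k\beta_k^\#=\alpha_k^\#\beta_k=\bcQ$ and $\delta_k^2=q^2\beta_k/\alpha_k=q^2\beta_k^\#/\alpha_k^\#$ on which every estimate in the proof of Lemma~\ref{l:main term} rests. Your bookkeeping of the four cross terms and the sign discussion are accurate.
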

The proof is exactly the same as the proof of Lemma \ref{l:main term}, so we skip it.

Let us use Lemma \ref{l:main term 01} to check the signs of lines \eqref{e:Pi f, Pi*f 02}, 
\eqref{e:Pi f, Pi*f 03}.  Let us fix $k>n$, $I\in\cS_n$ and let us average the right hand side of 
\eqref{l:main term 01} over all $J\in \cS_-^k(I)=\cS_-(I)\cap \cS_k$. We will get %%
\begin{align}\label{e:ave RHS}
 \alpha_n t_k' \left[  (\ba\ci{I}, \be)\ci{\R^2}^2 (\ba\ci{I}, \overline\bb\ci{I}^k)\ci{\R^2} + 
 O\ci{\delta_0<}(\delta_n^2) \right], 
\end{align}
where $\overline\bb\ci{I}^k$ is the average of the vectors $\bb\ci{\!J}$ over all $J\in\cS_-^k(I)$
(again with implied constants not depending on $n$, $k$, $I$, $\bcQ$). But we already computed the 
average $\overline\bb\ci{I}^k$ in Section \ref{finalestimates}, and we know that
\begin{align*}%\label{e:pos angle}
(\ba\ci{I}, \overline\bb\ci{I}^k)\ci{\R^2} & = a_k \sin \theta_n \ge 0.7 \delta_n, 
\end{align*}
see \eqref{e:pos angle}. Therefore the expression in brackets in \eqref{e:ave RHS} is bounded below 
by $0.5\delta_n>0$ (for $k>n$). 
Summing up over all $n$, $k>n$, and recalling that $\alpha_n t_k'\ge 0$ we get that \eqref{e:Pi f, 
Pi*f 02} is non-negative.

The treatment of the line \eqref{e:Pi f, Pi*f 03} is exactly the same. Taking the average of the 
right 
hand side of 
\eqref{e:Pi f, Pi*f 03} over all $J\in \cS_+^k(I)=\cS_+(I)\cap \cS_k$, we again get 
\eqref{e:ave RHS}, but now $\overline\bb\ci{I}^k$ is the average of the vectors $\bb\ci{\!J}$ over 
all $J\in\cS_+^k(I)$. 

But again, this average was already computed in Section \ref{finalestimates}, 
and we know that in this case
\begin{align*}%\label{e:pos angle}
-(\ba\ci{I}, \overline\bb\ci{I}^k)\ci{\R^2} & = a_k \sin \theta_n \ge 0.7 \delta_n, 
\end{align*}
see \eqref{e:pos angle -01}. The ``$-$'' sign in the above inequality is compensated by the minus 
in front of the sum in \eqref{e:Pi f, Pi*f 03}, so using the same reasoning as above we get that 
the contribution of \eqref{e:Pi f, Pi*f 03} is also positive.

Turning to the last sum \eqref{e:Pi f, Pi*f 04}, we first calculate using formula \eqref{e:Av W 
h_J} for $\La W \wh h\ci{\!J} \Ra\ci{\!J}$ that 
\[
\La W \wh h\ci{\!J} \Ra\ci{\!J} \La W^{-1} \Ra \ci{\!J} \be=
s\ci{\!J}''\ba\ci{J}+t\ci{J}''\bb\ci{J},
\]
{where} (for $I\in\cS_n$, $J\in\cS_k$) 
\[
s\ci{J}''=-\delta_k\beta_k\frac{\wt\alpha_k^\#-\wt\beta_k^\#}{1+\delta_k^2}(\bb\ci{J},\be)\ci{\R^2}
%=: s_k'' (\bb\ci{J},\be)\ci{\R^2},
 \quad
t\ci{J}''=-\delta_k\alpha_k\frac{\wt\alpha_k^\#-\wt\beta_k^\#}{1+\delta_k^2}(\ba\ci{J},\be)\ci{\R^2}
 =: t_k (\ba\ci{J},\be)\ci{\R^2}
\]
($t_k$ here is exactly $t_k$ from \eqref{e:t_k}), 
{and}
\[
\La W^{-1} \wh h\ci{I} \Ra\ci{I}\be=
\sigma\ci{I}''\ba\ci{I}+ \tau\ci{I}''\bb\ci{I}, 
\]
\noindent{where}
\[
\sigma\ci{I}''=\delta_n \frac{\wt \alpha_n-\wt \beta_n}{1+\delta_n^2} (\bb\ci{I},\be)\ci{\R^2} 
%=:\sigma_n (\bb\ci{I},\be)\ci{\R^2}, 
\qquad
\tau\ci{I}''=\delta_n \frac{ \wt \alpha_n -\wt \beta_n}{1+\delta_n^2} (\ba\ci{I},\be)\ci{\R^2}.
%=: \tau_n (\ba\ci{I},\be).
\]

Therefore
\begin{align}\notag
&\left(\La W^{-1} \wh h\ci{I} \Ra\ci{I}\be, \La W \wh h\ci{\!J} 
\Ra\ci{\!J} \La W^{-1} \Ra \ci{\!J} \be \right)\ci{\R^2} \\ \label{e:last term}
& \qquad \qquad =
\sigma\ci{I}''s\ci{\!J}''(\ba\ci{I},\ba\ci{J})\ci{\R^2}  + 
\sigma\ci{I}''t\ci{\!J}''(\ba\ci{I}, \bb\ci{J})\ci{\R^2} 
+
\tau\ci{I}''t\ci{J}''(\bb\ci{I},\bb\ci{J})\ci{\R^2} + \tau\ci{I}''s\ci{J}'' 
(\bb\ci{I},\ba\ci{J})\ci{\R^2} .
\end{align}

We observe that $(\ba\ci{I},\ba\ci{J})\ci{\R^2}>0$ and $(\bb\ci{I},\bb\ci{J})\ci{\R^2}>0$ and 
recall that $(\ba\ci{J},\be)\ci{\R^2} (\ba\ci{I},\be)\ci{\R^2}>0$ and $(\bb\ci{J},\be)\ci{\R^2} 
(\bb\ci{I},\be)\ci{\R^2}>0$. Then, $\wt\alpha_k-\wt\beta_k>0$ and $\wt\alpha_k^\#-\wt\beta_k^\#>0$ 
gives
\[
\sigma\ci{I}''s\ci{J}''(\ba\ci{I},\ba\ci{J})\ci{\R^2}<0, \qquad
\tau\ci{I}''t\ci{J}''(\bb\ci{I},\bb\ci{J})\ci{\R^2}<0. 
\]

We roughly estimate the remaining two terms in \eqref{e:last term}, noticing that 
\begin{align*}
0\le \sigma\ci{I}'' \underset{\delta_0<}{\lesssim} \tau\ci{I}'', \qquad 
0\le - s\ci{\!J}'' \underset{\delta_0<}{\lesssim} - t\ci{\!J}'' . 
\end{align*}
Recalling that 
$|(\ba\ci{I},\bb\ci{J})\ci{\R^2}|, \, |(\bb\ci{I},\ba\ci{J})\ci{\R^2}|\le 3\delta_n 
\underset{\delta_0<}{\lesssim} \delta_0 (\bb\ci{I},\bb\ci{J})\ci{\R^2} $, we then conclude that 
\begin{align*}
| \sigma\ci{I}''t\ci{\!J}''(\ba\ci{I}, \bb\ci{J})\ci{\R^2} |, \, 
| \tau\ci{I}''s\ci{J}'' (\bb\ci{I},\ba\ci{J})\ci{\R^2} |  \underset{\delta_0<}{\lesssim}
- \delta_0 \tau\ci{I}''t\ci{J}''(\bb\ci{I},\bb\ci{J})\ci{\R^2}, 
\end{align*}
implying that for sufficiently small $\delta_0$ the sum \eqref{e:last term} is negative. 
The minus sign in front of the sum \eqref{e:Pi f, Pi*f 04} gives us a positive contribution there.

For the diagonal terms \eqref{e:Pi f, Pi*f 01}, let us figure out the sign of $\left( \La W \wh
h\ci{I} \Ra\ci{I} \La W^{-1} \wh h\ci{I} \Ra\ci{I} \be , \La W^{-1} \Ra\ci{I}\be\right)\ci{\R^2}$.
The operators $\La W \wh h\ci{I} \Ra\ci{I}$ and $\La W^{-1} \wh h\ci{I} \Ra\ci{I}$ are given by
\eqref{e:Av W h_J} and \eqref{e:Av W^-1 h_J} respectively, so their product is a negative multiple
of the identity $\bI$. Therefore the sign of $\left( \La W \wh
h\ci{I} \Ra\ci{I} \La W^{-1} \wh h\ci{I} \Ra\ci{I} \be , \La W^{-1} \Ra\ci{I}\be\right)\ci{\R^2}$ 
is the same as the sign of 
\begin{align*}
\left( \be , \La W^{-1} \Ra\ci{I}\be\right)\ci{\R^2}  < 0 , 
\end{align*}
{which} implies a positive contribution of the diagonal term. 
\hfill\qed

\subsection{The lower bound  for the classical Haar Shift \tp{$\ShaUp$}{Sha}}\label{s:HaarShift}
Even though we will not need the lower estimate for the full Haar Shift for our main result, it is 
of independent interest and it is not hard to complete the proof.

\begin{thm}\label{MainTheorem2}
For the weight $W$  constructed above, and for the function $f=\1\ci{I^0}W^{-1}\be$ 
\tup{(}$\be=\ba\ci{I^0}$, or $\be = \ba\ci{I^0} + \bb\ci{I^0}$\tup{)}
\[
\|  \ShaUp f \|\ci{L^2(W)} \ge c \bcQ^{3/2} \|f\|\ci{L^2(W)}.
\]
\end{thm}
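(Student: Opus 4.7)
The plan is to reduce to the already established lower bound for $\sh$ (Remark \ref{r:S lower bound}) by splitting the classical Haar Shift into its odd-generation part and its even-generation part, and showing the even part is negligible at the cost of one power of $\bcQ$. Write
\[
\ShaUp = \sh + \ShaUp^{\text{even}}, \qquad
\ShaUp^{\text{even}} g := \sum_{I\in \cD\setminus\cD\ti{odd}} (g,h\ci I)\ci{L^2} \bigl[h\ci{I_+}-h\ci{I_-}\bigr].
\]
By Remark \ref{r:S lower bound}, $\|\sh f\|\ci{L^2(W)}\underset{\delta_0<}{\gtrsim} \bcQ^{3/2}\|f\|\ci{L^2(W)}$. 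Thus it suffices to prove $\|\ShaUp^{\text{even}} f\|\ci{L^2(W)}\underset{\delta_0<}{\lesssim} \bcQ \|f\|\ci{L^2(W)}$; then for all sufficiently large $\bcQ$ the triangle inequality gives
$\|\ShaUp f\|\ci{L^2(W)} \ge \|\sh f\|\ci{L^2(W)} - \|\ShaUp^{\text{even}} f\|\ci{L^2(W)} \gtrsim \bcQ^{3/2}\|f\|\ci{L^2(W)}$.

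The key observation is that for $f=\1\ci{I^0} W^{-1}\be$ the Haar coefficient $(f,h\ci I)\ci{L^2}$ vanishes unless $W^{-1}$ has different averages on $I_+$ and $I_-$. From the construction (Section \ref{ss:complete construction}) the only even-generation intervals for which this happens are the stopping intervals $I\in\cS$ (where $I_\pm$ are the \emph{rotation} children) and the terminal intervals $I\in\cT$ (where $W$ is constant on the children by Lemma \ref{l:stop}). All other even-generation Haar coefficients are zero. Hence we may rewrite
\[
\ShaUp^{\text{even}} f \;=\; \sh^{\cS} f + \sh^{\cT} f, \qquad
\sh^{\cX} g := \sum_{I\in \cX}(g,h\ci I)\ci{L^2}\bigl[h\ci{I_+}-h\ci{I_-}\bigr] \quad (\cX\in\{\cS,\cT\}).
\]
Both $\cS$ and $\cT$ are $\lambda$-Carleson families (with $\lambda\le 4$), since $\cS$ is $2$-Carleson by construction and $\cT$ consists of pairwise disjoint intervals.

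To bound each of $\sh^\cS$ and $\sh^\cT$ in $L^2(W)$ by $C\bcQ$, I imitate the argument of Lemma \ref{l:ignored parts}. Just as the sparse Haar shift $\sh\ti L^*$ was decomposed there into paraproduct-type pieces supported on a sparse family, one writes $\sh^{\cS} = 2^{-1/2}(\Pi + \Pi_1' - \Pi_2' - \Pi_3')$ in terms of sparse paraproducts indexed by $\cS$ (and analogously for $\sh^{\cT}$), and bounds each piece via the square-function estimate of Lemma \ref{l:SqFnEst} applied to the Carleson collections $\cS$ and $\cT$ (and their parents). One uses the elementary identity $\|W^{1/2}\Pi_\star W^{-1/2} g\|\ci{L^2}^2\le \|S\ci{1,\cC,W} g\|\ci{L^2}^2$ for the relevant piece $\Pi_\star$, together with the dual version for adjoint pieces; this yields $\|\sh^{\cS}\|\ci{L^2(W)\to L^2(W)},\|\sh^{\cT}\|\ci{L^2(W)\to L^2(W)} \lesssim \bcQ$, which is the bound we need. (Alternatively, for $\sh^{\cS}$ one can exploit the explicit smallness of the rotation-type Haar coefficient $\La W^{-1}\wh h\ci I\Ra\ci I\be$ in \eqref{e:Av W^-1 h_J}, of order $\delta_n(\wt\alpha_n-\wt\beta_n)\underset{\delta_0<}\lesssim \delta_n\alpha_n\asymp\bcQ^{1/2}$, to directly save a factor $\bcQ^{1/2}$ against the trivial $\bcQ^{3/2}$ upper bound from \cite{NaPeTrVo}.)

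The main obstacle is verifying that the cross terms between different $I\in\cS$ (or between $\cS$ and $\cT$) in $\|\sh^{\cS} f\|\ci{L^2(W)}^2$ do not spoil the bound, since in the weighted inner product the functions $h\ci{I_+}-h\ci{I_-}$ are not pairwise orthogonal across nested intervals. This is precisely the obstacle solved by the sparse square-function machinery of Lemma \ref{l:SqFnEst}: packaging the terms as paraproducts supported on the sparse families $\cS$ and $\cT$ reduces the estimate to an $L^2$ bound for the square function $S\ci{1,\cC,W}$, which is linear in $\bcQ$. Combining with the lower bound for $\sh f$ and choosing $\bcQ$ large enough that $C\bcQ<\tfrac12 c\bcQ^{3/2}$, the triangle inequality completes the proof.
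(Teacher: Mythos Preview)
Your overall strategy is correct and matches the paper: split $\ShaUp=\sh+\sh'$ into odd and even parts, use the known lower bound for $\sh f$, and show $\|\sh' f\|\ci{L^2(W)}\lesssim \bcQ\|f\|\ci{L^2(W)}$. You also correctly observe that only intervals $I\in\cS\cup\cT$ contribute to $\sh' f$. The gap is in how you propose to bound $\sh^{\cS}$.

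Your claimed decomposition $\sh^{\cS}=2^{-1/2}(\Pi+\Pi_1'-\Pi_2'-\Pi_3')$ produces paraproduct pieces whose output Haar functions $\wh h\ci{I_\pm}$, $I\in\cS$, are supported on \emph{nested} intervals (for $I\in\cS_n$ one has $I_{++}\in\cS_{n+1}$, so $(I_{++})_+\subset I_+$). The argument in Lemma~\ref{l:ignored parts} works precisely because the output intervals $L(I)$ are pairwise \emph{disjoint}, allowing $\|W^{1/2}\Pi_\star W^{-1/2}g\|\ci{L^2}^2$ to be written as a sum of squares and then dominated by $\|S\ci{1,\cC,W}g\|\ci{L^2}^2$. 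With nested output intervals the cross terms do not vanish, and there is no reduction to the square function; indeed, the paraproduct $\Pi$ itself (which has the same nested structure) satisfies $\|\Pi f\|\ci{L^2(W)}\gtrsim\bcQ^{3/2}\|f\|\ci{L^2(W)}$, so this kind of piece is certainly not $O(\bcQ)$ in general. Your final paragraph asserts that ``packaging the terms as paraproducts\ldots reduces the estimate to an $L^2$ bound for the square function,'' but that reduction is exactly what fails here.

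The paper handles $\sh' f$ differently. It expands $\|\sh' f\|\ci{L^2(W)}^2$ as a double sum over $I,J\in\cS\cup\cT$. The diagonal $I=J$ is controlled by Lemma~\ref{l:SqFnEst} as you suggest. For the off-diagonal terms one first notes that terminal intervals contribute nothing (if $J\in\cT$ then $W$ is constant on $J_\pm$, so $\int W(\wh h\ci{J_+}-\wh h\ci{J_-})=0$). For $I,J\in\cS$ with $J\subsetneq I$ the paper computes $\La W(\wh h\ci{J_+}-\wh h\ci{J_-})\Ra\ci{J}$ explicitly from the stretching formulas: it is a scalar multiple of $\ba\ci{J}\bb\ci{J}^*+\bb\ci{J}\ba\ci{J}^*$ with coefficient $\lesssim\delta_k\alpha_k^\#$. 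Combined with $\|\La W^{-1}\wh h\ci{I}\Ra\ci{I}\|\lesssim\delta_n\alpha_n$ and $\|\La W^{-1}\wh h\ci{J}\Ra\ci{J}\|\lesssim\delta_k\alpha_k$, each off-diagonal term is $\lesssim\delta_k^2\alpha_k\alpha_k^\#\cdot\delta_n\alpha_n=q^2\bcQ\,\delta_n\alpha_n$, and the identity $\alpha_n/\alpha_n^\#=\alpha_0/\alpha_0^\#$ (Remark~\ref{r:alpha/alpha}) turns this into $\lesssim q^2\bcQ\,\delta_0\alpha_0$. Summing over the Carleson family gives an off-diagonal contribution $\lesssim q^2\bcQ\,\delta_0\|f\|\ci{L^2(W)}^2$, which is negligible for small $\delta_0$. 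In short, the bound on $\sh' f$ is not a general sparse estimate but a computation that uses the specific structure of the constructed weight.
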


\begin{proof}
It was proved above, see Remark \ref{r:S lower bound}, that for the odd Haar Shift $\sh$ and 
$f=\1\ci{I^0}W^{-1}\be$ we have the desired estimate $\| \sh f\|\ci{L^2(W)}\gtrsim \bcQ^{3/2}$. 
Therefore, to prove Theorem \ref{MainTheorem2} 
it suffices to show that the contribution of the  \emph{even} Haar Shift $\sh'$ (where summation is 
over the intervals in 
$\cD\ti{even}=(\cD_{2k})_{k\ge 0}$),
\begin{align*}
 \sh' f = \sum_{I\in\cD\ti{even}} (f, h\ci I)\ci{L^2} \left[h\ci{I_+}-h\ci{I_-}\right],
\end{align*}
is small in comparison.   
Below we will show that for
the weight $W$ and
our test function $f=\1\ci{I^0}W^{-1}\be$ 
\begin{align}\label{e:S'f}
\| \sh' f \|\ci{L^2(W)}\lesssim \bcQ \|f\|\ci{L^2(W)}, 
\end{align}
which will prove the theorem.

The weight $W^{-1}$, and so the test function $f=\1\ci{I^0}W^{-1}\be$, are constant on children of 
terminal intervals, see Section \ref{s:terminal int}, so the descendants of terminal intervals do 
not contribute to $\sh' f$. 
The terminal intervals do contribute to $\sh'f$, so expanding $ \| \sh' f \|\ci{L^2(W)}^2$ we get
\begin{align}\notag
&  \| \sh' f \|\ci{L^2(W)}^2 \\ \label{e:S'f 01}
&=\sum_{I\in \cS\cup\cT}\sum_{J\in \cS \cup \cT} 
   \left( 
   \left[\int W(\wh h\ci{I_+}-\wh h\ci{I_-})(\wh h\ci{J_+}-\wh h\ci{J_-})\right]
   \La W^{-1} \wh h\ci{I}\Ra\ci{I}\be,  
   \La W^{-1} \wh h\ci{J}\Ra\ci{J}\be
   \right)\ci{\R^2}, 
\end{align}
where, recall, 
$\cT$ is the collection of all 
terminal intervals. 

The estimate for the sum of the diagonal terms $I=J$ can be easily reduced to Lemma 
\ref{l:SqFnEst}. Namely, we can estimate this sum as 
\begin{align*}
\sum_{I\in\cS\cup \cT} \left( \La W \Ra\ci{I} \La W^{-1}\wh h \be \Ra\ci{I}, \La W^{-1}\wh h \be 
\Ra\ci{I}  \right)\ci{\R^2} |I| 
& \le \int_\R \sum_{I\in\cS\cup \cT} \left\La \| W(x)^{1/2} f \|\right\Ra_{I}^2 \dd x \\
& = \int_\R \sum_{I\in\cS\cup \cT} \left\La \| W(x)^{1/2} W^{-1/2} W^{1/2}f 
\|\ci{\R^2}\right\Ra_{I}^2 \dd x \\
& = \| S_1 W^{1/2} f \|\ci{L^2}^2, 
\end{align*}
where $S_1 = S\ci{1,\cS\cup\cT, W}$ is the square function \eqref{e:S_1} with $\cC= 
\cS\cup\cT$.

As we discussed above in Section \ref{s:ess part} the collection $\cS$ is $2$-Carleson, therefore
the collection $\cS\cup \cT$ is $4$-Carleson. Therefore, by Lemma \ref{l:SqFnEst} 
\begin{align*}
\| S_1 W^{1/2} f \|\ci{L^2}^2 \lesssim \bcQ^2 \|W^{1/2} f\|\ci{L^2}^2 = \bcQ^2 \|f\|\ci{L^2(W)}^2 .
\end{align*}

To estimate the rest of the sum \eqref{e:S'f 01}, let us thus consider one of the (symmetric) off 
diagonal terms 
with $J\subsetneq I$. If $I\in \cT$ 
then there is no contribution. If $J\in \cT$ then $\int W(\wh h\ci{I_+}-\wh h\ci{I_-})(\wh 
h\ci{J_+}-\wh h\ci{J_-}) =\pm \int W(\wh h\ci{J_+}-\wh h\ci{J_-})= 0$ since $W$ is constant on 
$J_\pm$, so again there is no contribution. Thus we only need to consider the case $I, J\in\cS$. 
Let us assume that $I\in\cS_n$, $J\in\cS_k$, $k>n$.  

We calculate 
\[
\La W(\wh h \ci{J_+}-\wh h \ci{J_-}) \Ra\ci{J}=\frac{1}{4}
\left[ \La W \Ra \ci{J_{++}}-\La W \Ra \ci{J_{+-}} -\La W \Ra \ci{J_{-+}}+\La W \Ra \ci{J_{--}}
\right].
\]
Let us regroup the terms into differences corresponding to ``neighboring'' stopping and  terminal 
intervals.  For the stopping intervals $J_{-+}$ and $J_{++}$ we get
\begin{align}\label{e:diff stop}
\frac{1}{4}
\left[ \La W \Ra \ci{J_{++}}-\La W \Ra 
\ci{J_{-+}}\right]=\frac{1}{2}\cdot\frac{\delta_k}{1+\delta_k^2}(\beta_{k+1}^\#-\alpha_{k+1}^\#)
(\ba\ci{J}\bb\ci{J}^*+\bb\ci{J}\ba\ci{J}^*);
\end{align}
we use the same calculation as for \eqref{e:Av W h_J} with $\wt\alpha_k^\#$, $\wt\beta_k^\#$ 
replaced by 
their ``stretched'' counterparts $\alpha_{k+1}^\# = r \wt\alpha_k^\# $, $\beta_{k+1}^\# =\wt 
\beta_{k}^\#/(s_k r)$.  

As for the terminal 
intervals we get using the formulas \eqref{e:stretch -parts} that
\begin{align}\label{e:diff terminal}
\frac{1}{4}
\left[ \La W \Ra \ci{J_{+-}}-\La W \Ra \ci{J_{--}}\right]
=\frac{1}{2}\cdot\frac{\delta_k}{1+\delta_k^2}\left( \left(2-\frac{1}{s_k r}\right)\wt 
\beta_{k}^\#-(2-r)\wt 
\alpha_{k}^\#\right)
(\ba\ci{\!J}\bb\ci{\!J}^*+\bb\ci{\!J}\ba\ci{\!J}^*).
\end{align}
The exact coefficients are not essential here; what is essential is that in both cases we have 
scalar multiples of the operator $\ba\ci{\!J}\bb\ci{\!J}^*+\bb\ci{\!J}\ba\ci{\!J}^*$, and the 
absolute values of the coefficients are estimated as $\lesssim \delta_k\alpha_{k}^\#$.%
\footnote{Of course, to get to this conclusion we do not really need the tedious calculations 
above: everything can be obtained just by simple analysis of the corresponding terms.}
The same, of course, holds for the difference, so taking into account that $\| 
\ba\ci{\!J}\bb\ci{\!J}^*+\bb\ci{\!J}\ba\ci{\!J}^*\|=1$ we get a trivial rough estimate
\begin{align*}
\left\| \La W(\wh h \ci{J_+}-\wh h \ci{J_-}) \Ra\ci{J} \right\|
\lesssim 
\delta_k\alpha_{k}^\# .
\end{align*}

Recalling  formula \eqref{e:Av W^-1 h_J} for $\La W^{-1} \wh h \ci{\!J}\Ra \ci{\!J}$ 
we get 
\begin{align*}
\left\|  \La W^{-1} \wh h \ci{\!J}\Ra \ci{\!J} \right\| \lesssim \delta_k \alpha_k, \qquad
\left\|  \La W^{-1} \wh h \ci{I}\Ra \ci{I} \right\| \lesssim \delta_n \alpha_n. 
\end{align*}
So a rough estimate gives us 
\begin{align*}
A\ci{I,J}:=\left|\left(
 \La W(\wh h \ci{J_+}-\wh h \ci{J_-}) \Ra\ci{J}\La W^{-1} \wh h \ci{J}\Ra \ci{J}\be , \La W^{-1} 
 \wh h \ci{I}\Ra \ci{I}\be
\right)\ci{\R^2} \right|  %\cdot|J| 
&\lesssim \delta_k^2 \alpha_k \alpha_k^\# \delta_n\alpha_n \\
&= q^2 \bcQ \delta_n\alpha_n \\ 
&= q^3\bcQ \beta_n^{1/2}\alpha_n^{1/2} \\
&= q^3\bcQ^{3/2}(\alpha_n/\alpha_n^\#)^{1/2}. 
\end{align*}
Using \eqref{e:alpha/alpha} and identities $\bcQ = \alpha_n^\# \beta_n$, $\delta_n = 
q(\beta_n/\alpha_n)^{1/2}$ with $n=0$ and the fact that $\alpha_n/\alpha^\#_n = 
\alpha_0/\alpha^\#_0$, see Remark 
\ref{r:alpha/alpha}, we get 
\begin{align*}
q \bcQ^{1/2}(\alpha_n/\alpha_n^\#)^{1/2} = q \bcQ^{1/2} (\alpha_0/\alpha_0^\#)^{1/2} 
=\delta_0\alpha_0 ,   
\end{align*}
so
\begin{align*}
A\ci{I,J} \lesssim q^2 \bcQ \delta_0\alpha_0.  
\end{align*}
Multiplying by $|J|$ and summing over all intervals we get 
\begin{align*}
\sum_{I\in\cS}\sum_{J\in\cS, J\subsetneq I} A\ci{I,J}|J| &\lesssim 
\sum_{I\in\cS} q^2 \bcQ \delta_0\alpha_0 |I|  = 2 q^2 \bcQ \delta_0\alpha_0  \asymp q^2 \bcQ 
\delta_0 \| f\|\ci{L^2(W)}^2;
\end{align*}
here we used the fact that for a stopping interval $I$ the total length of the stopping intervals 
$J\subset I$ of the next generation is $|I|/2$.  

By picking sufficiently small $\delta_0$ we can make the sum of off diagonal terms as small as we 
want, so it can be ignored. Thus \eqref{e:S'f} holds, and Theorem \ref{MainTheorem2} is proved. 
\end{proof}

\section{Towards the lower bound for the Hilbert Transform: remodeling}  
\label{s:remodeling}
Above we constructed a dyadic $\bA_2$ weight $W=W\ci{\!\bcQ}$ on $I^0=[0,1)$, 
$[W]\ci{\bA_2}\ut{dy}=\bcQ$ such that the dyadic operators, namely the paraproduct $\Pi$, the 
\emph{odd} Haar Shift $\sh$ and  the linear combination $\Hdy=c_1(\sh-\sh^*) + c_2 \shst$ had lower 
bounds $c\bcQ^{3/2}$.

In this section we will use the weight $W$ to construct a weight $\wt W$ such that $[\wt 
W]\ci{\bA_2}\asymp [W]\ci{\bA_2}\ut{dy}=\bcQ$ to prove later in Section \ref{s:Lower bound Hilbert} 
that the norm of  the Hilbert 
Transform $\HT$ in  $L^2(\wt W)$ is bounded below by  $c\bcQ^{3/2}$.  

Since from Lemma \ref{l:Lower bound for T} we have a lower bound for the operator $\Hdy$,  
$\Hdy=c_1(\sh-\sh^*) + c_2 \shst$, there exist functions 
$f\in L^2(W)$, $g\in L^2(W)$, $\|f\|\ci{L^2(W)} = \|g\|\ci{L^2(W)} =1$ such that 
\begin{align}\label{e:biest01}
\left(\Hdy   f, g\right)\ci{L^2(W)} \gtrsim \bcQ^{3/2}. 
\end{align}
In our construction the functions $f$, $g$ were the normalized in $L^2(W)$
 functions 
$\1\ci{I^0}W^{-1}\be$ and 
$\Hdy f =\Hdy W^{-1}\1\ci{I^0}\be$ respectively, but in what follows we do not care what the 
functions $f$ and 
$g$ are. Denoting %$\bff := W^{-1}f$, 
$\bg:= Wg$, and using $\bff=f$ for consistency sake, we can see that $\|\bff\|\ci{L^2(W)} = 
\|\bg\|\ci{L^2(W^{-1})} =1$, and \eqref{e:biest01} can be rewritten as 
\begin{align}\label{e:biest02}
\left(\Hdy \bff, \bg\right)\ci{L^2} = \left(\Hdy f, g\right)\ci{L^2(W)} \gtrsim \bcQ^{3/2}.
\end{align}
And using the construction presented below, we will show that \eqref{e:biest02} implies the lower 
bound for the Hilbert Transform $\cH$ in $L^2(\wt W)$   for a weight $\wt W$ remodeled from $W$.

Throughout this section we denote for intervals $I$, $J$ by $\psi\ci{I, J}$ the unique 
orientation preserving affine bijection from $J$ to $I$.

\subsection{(Iterated) periodization} 
\label{s:IterPer}

Given a function $f$ on an interval $I$ define its \emph{periodization} $\per_{I}^N f$ with 
``period'' $N$ as the unique function on $I$ with period $2^{-N}|I|$, consisting of repeated $2^N$ 
``shrunken copies'' of the function $f$, i.e.\ $\per_{I}^N f = f \circ \psi\ci{I}^N$, where the 
restriction of $\psi\ci{I}^N$ onto each  $J\in\ch^N(I)$ coincides with $\psi\ci{I, J}$. Note that 
$\psi\ci{I}^N$ is a measure preserving transformation of the interval  $I$.

Given a sequence $\vec N = (N_1, N_2, N_3, \ldots)$ of frequencies  $N_k$ and a function $f$ on 
$I^0$ let us define the \emph{iterated periodization} $\per^{\vec N} f$ by induction as follows.

\begin{enumerate}
\item We start with the children $I^0_\pm$ of $I^0$ as the  \emph{starting intervals} of the first 
order, $\Start_1$ and define $f_0:=f$. 

\item Knowing the function $f_{k-1}$ define the function $f_k$ by replacing the function 
$f_{k-1}|_I$ 
on every starting interval $I$ of order $k$    by its periodization $\per_{I}^{N_k} 
(f_{k-1}|_{I})$. We will call the descendants of order $N_k$ of the starting intervals of order $k$ 
the \emph{stopping intervals} of order $k$. 
\item Define the starting intervals of order $k+1$ as the grandchildren ($4$ intervals in 
$\ch^2(I)$) of the stopping intervals of order $k$, and return to step \cond2. 
\end{enumerate}

To avoid technical details with convergence we will only consider functions $f\in L^1(I^0)$ with 
finite Haar expansion; in this case the process stabilizes after finitely many steps. 

Since a periodization is obtained by composing a function with a measure  preserving 
transformation, the iterated periodization $\per^{\vec N} f$ is also given by $\per^{\vec N} f = 
f\circ \Psi^{\vec N}$, where $\Psi^{\vec N}$ is a measure preserving transformation.  

\subsubsection{An alternative description of iterated periodization}\label{s:alt per}
Analyzing the process described above, we can see that the iterated periodization associates with a 
starting interval $I$ of order $k$ 
a unique interval $J=\Sinv(I)\in \cD_{2k-1}$, such that $f_{k-1}|_I=f\circ \psi\ci{J,I}$. Then, on 
$I$ we perform the periodization of the 
function $f\circ \psi\ci{J,I}$. We will also associate the same interval $J=\Sinv(I)\in\cD_{2k-1}$ 
with all 
stopping subintervals $I'\subset I$ of order $k$, % of  order $k$, 
so $\Sinv(I')=\Sinv(I)$ for such intervals. 

The function $\Sinv$  maps each starting (and stopping) interval $I$ of order $k$ to a 
unique interval $\Sinv(I)\in\cD_{2k-1}$. Note that the function $\Sinv$ depends (only) on the 
sequence $\vec N$ of indices, so $\Sinv=\Sinv^{\vec N}$; however, to simplify our typography, we 
will 
often omit index $\vec N$. 

Now let us see what happens when we apply the periodization to the martingale difference 
decomposition of $f$, 
\begin{align}\label{e:MartDiffDecomp2}
f = \bE_1 f + \sum_{J\in\cD\ti{odd}} \Delta\ci{J}^2 f . 
\end{align}
Since the periodization does not change constant functions, we can see that $\per^{\vec N} \bE_1
f=\bE_1 f$, and   the periodization  $\per^{\vec N}\Delta\ci{J}^2 f$ with $J\in \cD_{2k-1}$ depends
only on indices $N_j$, $j\le k$. It is also easy to see that the starting intervals of order $k$,
and action of the function $\Sinv$ on these intervals depends only on indices $N_j$, $j<k$. Thus 
for $J\in\cD_{2k-1}$ we can write
\begin{align}\label{e:P Delta f}
\per^{\vec N}\Delta\ci{J}^2 f = \sum_{\substack{I\in \Start_k:\\ \cF(I) = J}} \per\ci{I}^{N_k}
\left( \left(\Delta\ci{J}^2 f\right) \circ \psi\ci{J,I}  \right)   .
\end{align}

Note that the position of the starting intervals $I\in\Start_k$, $\Sinv(I) = J$ depends only on 
indices $N_j$, $j<k$. 

Also, since the iterated periodization is given by a composition with a measure preserving 
transformation, we can see that
\begin{align}\label{e:sum |I|}
\sum_{\substack{I\in \Start:\\ \cF(I) = J}} |I| = |J|. 
\end{align}
\begin{figure}[ht]
    \vspace*{-0.5cm}\hspace*{-2cm}\includegraphics[width=18cm]{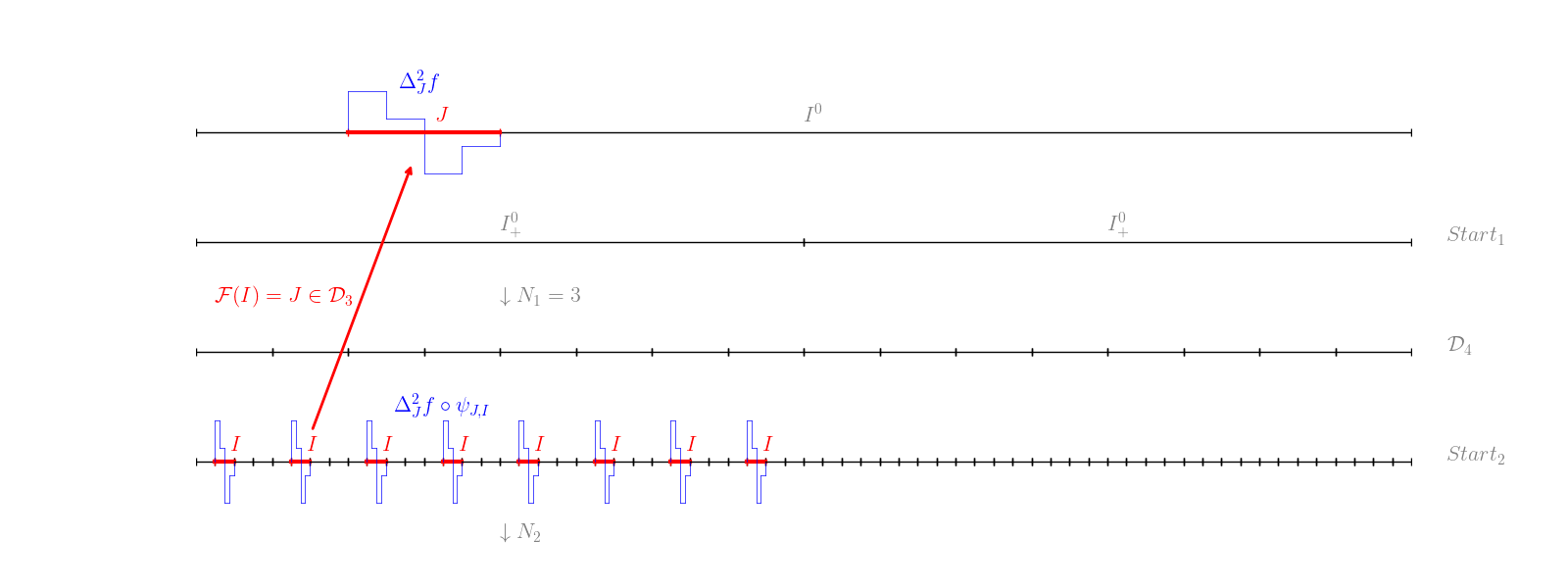}
    %\vspace*{-0.5cm}\hspace*{-2cm}\includesvg[width=18cm]{Periodisation.svg}
    \caption{The first intervals on which periodisation is applied are
    $\Start_1=\{ I^0_-,I^0_+\}$. The function $f|_{I^0_-}$ %restricted to $I^0_-$ 
    is copied $2^{N_1}=8$ times onto the $8$ intervals in $\mathcal{D}_4\cap{I^0_-}$. The grandchildren $\mathcal{D}_6\cap{I^0_-}$ of those $8$ intervals constitute the new starting intervals $\Start_2\cap{I^0_-}$ below $I^0_-$. Each of these grandchildren carries a copy of one of the grandchildren of $I^0_-$. For example the interval $J\in\mathcal{D_3}$ marked in red is the second grandchild (counting from the left) of $I^0_-$. Correspondingly the intervals $I\in\Start_2\cap{I^0_-}$ marked in red are the second grandchildren of intervals in $\mathcal{D}_4\cap{I^0_-}$. As such, the second difference $\Delta^2_I f$ on those intervals is a compressed version of the second difference $\Delta^2_J f$ through the map $\Psi_{J,I}$, with $J=\mathcal{F}(I)$.}
    \label{fig: Periodisation}
\end{figure}

Finally, the measure preserving property implies the preservation  of products,  norms,  and inner 
products:
\begin{lm}\label{l:P preserves norms}
For $f, \,g\in L^2(\R)$  with support on $I^0$ we have
\[
\per^{\vec N} (f g) = (\per^{\vec N} f)(\per^{\vec N} g),
\quad
\left(\per^{\vec N} f, \per^{\vec N} g \right)\ci{L^2(\R)} = \left(f, g \right)\ci{L^2(\R)},
\quad
\| \per^{\vec N} f \|\ci{L^2(\R)} = \| f \|\ci{L^2(\R)}.
\]
\end{lm}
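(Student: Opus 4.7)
The plan is to reduce all three identities to the single structural fact, already established in the text, that the iterated periodization is nothing more than precomposition with a measure preserving transformation $\Psi^{\vec N}$ of $I^0$. Once one has $\per^{\vec N} f = f\circ \Psi^{\vec N}$ for every $f\in L^1(I^0)$, the three identities are formal consequences: the product identity follows because composition is an algebra homomorphism, $(fg)\circ \Psi = (f\circ\Psi)(g\circ\Psi)$; the inner product identity follows from the change-of-variables formula for a measure preserving map; and the norm identity is then just the special case $g=f$.

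First I would verify carefully that $\Psi^{\vec N}$ is well defined and measure preserving. This is done by induction on the number of iteration steps. The base case is the ordinary periodization $\per_I^{N_1}$ on a starting interval $I\in\Start_1$, which by its very definition is composition with the piecewise affine map $\psi_I^{N_1}$ whose restrictions to the intervals $J\in \ch^{N_1}(I)$ are the orientation preserving affine bijections $\psi_{I,J}\colon J\to I$; each such restriction scales length by $|I|/|J|=2^{N_1}$, and there are $2^{N_1}$ pieces, so the pullback of Lebesgue measure is Lebesgue measure. For the inductive step, on each starting interval $I\in\Start_k$ one again composes with a similar piecewise affine map defined from $\ch^{N_k}(I)$, which is measure preserving on $I$ by the same counting argument; pasting these local maps together yields a measure preserving transformation of $I^0$. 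Since our functions have finite Haar expansion the whole construction stabilizes after finitely many steps and there are no convergence issues.

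Once $\per^{\vec N}f = f\circ \Psi^{\vec N}$ with $\Psi^{\vec N}$ measure preserving is in hand, the three conclusions are immediate. For the product identity, pointwise evaluation gives
\[
\per^{\vec N}(fg)(x) = (fg)(\Psi^{\vec N}(x)) = f(\Psi^{\vec N}(x))\,g(\Psi^{\vec N}(x)) = (\per^{\vec N}f)(x)(\per^{\vec N}g)(x).
\]
For the inner product identity, using the change of variables under the measure preserving map,
\[
(\per^{\vec N}f,\per^{\vec N}g)_{L^2} = \int_{I^0} f(\Psi^{\vec N}(x))\,\overline{g(\Psi^{\vec N}(x))}\,\dd x = \int_{I^0} f(y)\,\overline{g(y)}\,\dd y = (f,g)_{L^2}.
\]
The norm identity is the case $g=f$.

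There is no serious obstacle here; the only thing that requires care is bookkeeping in the inductive definition of $\Psi^{\vec N}$ to make sure that the local measure preserving maps on starting intervals of successive generations paste consistently. This is automatic from the construction in Section~\ref{s:IterPer}: the starting intervals of order $k+1$ are disjoint dyadic subintervals of the stopping intervals of order $k$, so the definition of $\Psi^{\vec N}$ at step $k+1$ only modifies $\Psi^{\vec N}$ on disjoint sub-pieces of what was defined at step $k$, and measure preservation is preserved under such local replacements.
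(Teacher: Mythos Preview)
Your proposal is correct and follows exactly the approach the paper takes: the paper does not give a separate proof of this lemma, but simply states just before it that ``the measure preserving property implies the preservation of products, norms, and inner products,'' referring to the already-established fact that $\per^{\vec N} f = f\circ \Psi^{\vec N}$ with $\Psi^{\vec N}$ measure preserving. Your write-up fills in the routine details of that one-line justification.
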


\subsection{Remodeling} The iterated periodization will give us the desired lower bound for 
\linebreak $\left(\HT^\R \per^{\vec N}\bff, \per^{\vec N}\bg \right)\ci{L^2(\R)}$. Unfortunately, 
while the weight $\per^{\vec N} W$ has the same dyadic $\bA_2$ characteristic as $W$, its $\bA_2$ 
characteristic can blow up. 

Fortunately, a  simple idea, introduced 
by F.~Nazarov in \cite{Nazarov} and further 
developed later in \cite{KakTre21}, allows one to pass from dyadic characteristics to the classical 
ones. 

\subsubsection{\tup{(}Iterated\tup{)} quasi-periodizations}
\label{s:QPer}   We start with a dyadic $\bA_2$ weight, and we want to get a 
regular $\bA_2$ weight (with comparable constant). Our weight $W$ is generally not an $\bA_2$ 
weight, it is only a dyadic $\bA_2$ weight, and the periodization does not make the $\bA_2$ 
characteristic better. Fortunately, a slight modification of the iterated periodization, which we 
will call the \emph{quasi-periodization} allows us to get the $\bA_2$ condition on all intervals. 

Namely, for a function $f$ on $I^0$ with finite Haar expansion we get its quasi-periodization as 
follows. 

Recall that in the periodization a starting interval $I$ of order $k$ is divided into $2^{N_k}$ 
dyadic subintervals, which we called \emph{stopping intervals} of order $k$. Let us call the  
stopping intervals touching the boundary  of $I$ the \emph{exceptional} stopping intervals (there 
are $2$ of them), and the rest are the \emph{regular} stopping intervals. We use the notation 
$\cE(I)$ and $\cR(I)$ for exceptional and regular stopping subintervals respectively.

To get the function $f_{k}$ from $f_{k-1}$ on the starting interval $I$, we, as in the case of 
iterated periodization,  compute $\per\ci{I}^{N_k} f_{k-1}$, 
but  on the exceptional stopping subintervals $I'\in\cE(I)$ we put the values of the function $f_k$ 
to 
be $\La f_{k-1} \Ra\ci{I} = \La \per\ci{I}^{N_k} f_{k-1} \Ra\ci{I'}$; on the regular stopping 
intervals 
$I'\in \cR(I)$ we still have $f_k = \per\ci{I}^{N_k} f_{k-1}$. The result on the interval $I$ will 
be denoted by $\qper\ci{I}^{N_k} f_{k-1}$; note that it differs from $\per\ci{I}^{N_k} f_{k-1}$ 
only on two exceptional 
stopping subintervals of $I$. 

We then will do nothing on the exceptional stopping intervals, and continue on the regular 
ones: take their grandchildren as the starting intervals, and so on\ldots. 

The process will stop after finitely many steps, and we denote the result as $\qper^{\vec N} f$.

Note that the quasi-periodization is not given by a composition of the original function with a 
measure 
preserving transformation. 
However, exactly as in the case of iterated periodization, we will assign to each starting and 
each stopping interval $I$ of order $k$  a unique interval  $\Sinv(I)\in\cD_{2k-1}$, see end of 
Section \ref{s:IterPer}. Formally, the function $\Sinv$ is the  restriction of the function 
described above in Section \ref{s:IterPer}, because it is not defined on any of the descendants of 
the exceptional stopping intervals. However, to simplify typography again (and because we will not 
be using this function for periodization anymore) we will be using the same 
symbol $\Sinv$ for it. 

We can compare the iterated periodization and the iterated quasi-periodization quantitatively.
\begin{lm}\label{l:PversusQP}
Let $f$ be a function with finite Haar expansion, we have
\begin{align*}
\left\| \per^{\vec N} f - \qper^{\vec N} f \right\|_{L^2(\R)} \longrightarrow 0 \qquad \text{as } 
\vec 
N\to\infty.
\end{align*}
\end{lm}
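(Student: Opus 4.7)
The approach is to isolate the support on which $\per^{\vec N} f$ and $\qper^{\vec N} f$ can differ, bound the difference pointwise there, and exploit the smallness of that support.

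First, I note that both functions are pointwise bounded by $\|f\|_\infty < \infty$: the iterated periodization is the composition $f \circ \Psi^{\vec N}$ with a measure-preserving map (Section~\ref{s:IterPer}), while the iterated quasi-periodization inherits this bound inductively, since both periodization and averaging over a subinterval preserve $L^\infty$. Next, I observe that the two constructions act identically on the regular stopping intervals of the quasi-periodization and on every starting interval descending through a chain of such regular intervals -- both processes periodize the same inputs on the same intervals. Hence the difference is supported on the union $E = \bigcup_{k \ge 1} E_k$ of exceptional stopping intervals of the quasi-periodization, with the pointwise bound $|\per^{\vec N} f - \qper^{\vec N} f| \le 2 \|f\|_\infty \,\1_E$.

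At order $k$, each $I \in \Start_k$ contributes exactly two exceptional subintervals of length $|I|/2^{N_k}$, so $|E_k| \le 2|I^0|/2^{N_k}$. To make the sum of these estimates vanish as $\vec N \to \infty$, I cut off the sum in $k$ using the finite Haar expansion of $f$. Suppose $f$ is supported in Haar modes of depth less than $m$, so that $f$ is constant on every $J \in \cD_m$. By the description in Section~\ref{s:alt per}, each starting interval $I$ of order $k$ in the quasi-periodization carries an associated $\Sinv(I) \in \cD_{2k-1}$ with $f_{k-1}|_I = f \circ \psi\ci{\Sinv(I), I}$. Since both constructions coincide along the chain of regular ancestors of $I$, this identity holds in both processes. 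Once $k \ge K := \lceil (m+1)/2 \rceil$, the interval $\Sinv(I)$ has depth at least $m$, so $f|_{\Sinv(I)}$ is constant, hence $f_{k-1}|_I$ is constant, and both constructions therefore yield the same constant value on all exceptional subintervals of $I$. Thus the difference is supported in $\bigcup_{k=1}^{K} E_k$, and
\begin{equation*}
\| \per^{\vec N} f - \qper^{\vec N} f \|_{L^2(\R)}^2 \;\le\; 4\|f\|_\infty^2 \sum_{k=1}^{K} |E_k| \;\le\; 8\,\|f\|_\infty^2 \, |I^0| \sum_{k=1}^{K} 2^{-N_k},
\end{equation*}
which tends to $0$ as $\vec N \to \infty$ in the sense that each $N_k$ with $k \le K$ goes to infinity.

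The main technical point is verifying the identity $f_{k-1}|_I = f \circ \psi\ci{\Sinv(I), I}$ in \emph{both} constructions for starting intervals $I$ of order $k$ in the quasi-periodization -- this expresses the fact that the two processes have not diverged anywhere along the chain of regular ancestors of $I$. Section~\ref{s:alt per} gives this for iterated periodization, and a simple induction on $k$ extends it to the quasi-periodization. Once this identification is in place, the finite Haar support of $f$ lets us truncate the sum at $k = K$, and the remaining estimates are elementary bookkeeping of measures combined with the uniform $L^\infty$ bound.
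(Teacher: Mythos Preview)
Your proof is correct and follows the same approach as the paper: bound the difference pointwise by $2\|f\|_\infty$ on the exceptional intervals and use that their total measure vanishes. The paper's proof is a one-liner invoking exactly this idea; you have simply spelled out the details the paper leaves implicit, in particular the explicit cutoff $K = \lceil (m+1)/2 \rceil$ coming from the finite Haar expansion, which guarantees only finitely many orders $k$ contribute.
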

\begin{proof}
    Since $f$ with finite Haar expansion are bounded, the difference above is controlled by the 
    total size of the exceptional intervals, which is arbitrarily small as $\vec N\to\infty$.
\end{proof}

\subsubsection{An alternative description of the iterated quasi-periodization} 
\label{s: alt QP}
Let us analyze the action of the quasi-periodization on martingale differences and write an 
analogue of
formula \eqref{e:P Delta f} for the (iterated) quasi-periodization. Applying the iterated 
quasi-periodization to the martingale difference decomposition \eqref{e:MartDiffDecomp2}, we get
almost the same formula as \eqref{e:P Delta f} with a slight modification. Namely, denote by
$\Startq\subset\Start$ the collection of all starting intervals that are not a subset of any
exceptional stopping interval. Then it is not hard to see that for $J\in\cD_{2k-1}$
\begin{align}\label{e:P Delta f 01}
\qper^{\vec N}\Delta\ci{J}^2 f = \sum_{\substack{I\in \Startq:\\ \cF(I) = J}} \qper\ci{I}^{N_k}
\left( \left(\Delta\ci{J}^2 f\right) \circ \psi\ci{J,I}  \right)   .
\end{align}

Looking at this formula and the martingale difference decomposition \eqref{e:MartDiffDecomp2}, we
can see that for a function $f$ with finite Haar expansion its quasi-periodization $\qper^{\vec N}
f$ can be obtained by starting with the function $\bE_1 f$ (which is constant on $I^0_\pm$) and then 
applying recursively on  starting intervals the following procedure finitely many times:

\begin{proc}\label{proc:QP step}
On an interval $I$ where the previously constructed function is constant, add to this 
function a quasi-periodization $\qper^{N(I)} \Delta\ci{I}^2\vf\ci{I}$ of an appropriate second 
order martingale difference.
\end{proc}

The order of starting intervals where we apply the above procedure is not really important: it 
just needs to agree with the ordering by inclusion of the intervals. Namely, we can apply the 
procedure to a starting interval only after we are done with all starting intervals containing it. 

\subsubsection{Remodeling} In our case we apply the 
quasi-periodization to the weights $W$, $V=W^{-1}$ and the functions $\bff$, $\bg$; recall that we
assumed that all the objects have finite Haar expansions. We get the quasi-periodization of all 
objects,
but while we do not need to worry about $\wt\bff:=\qper^{\vec N} \bff$, $\wt\bg:= \qper^{\vec N}
\bg$, we have a small problem with the quasi-periodizations of the weights: on the exceptional 
stopping
intervals the weights $\qper^{\vec N} V$ and $\qper^{\vec N} W$ are not inverses of each other
(but they are everywhere else).

Fortunately, this problem is very easy to fix. After performing the (iterated) quasi-periodization, 
we got a
disjoint collection of exceptional stopping intervals, where we have a problem. So, on each such 
exceptional stopping interval 
$I$ of order $k$ we replace constants by appropriately chosen weights 
and perform the iterated quasi-periodization of these  
weights.   
We will get a new collection of exceptional stopping intervals, perform iterated 
quasi-periodization of appropriate weights on these intervals, and so on\ldots

To write it down formally, recall that during the quasi-periodization,
as it was discussed above at the end of Section \ref{s:QPer}, we associate with each stopping 
interval $I$  of order $k$ a unique interval $J=\Sinv(I)\in\cD_{2k-1}$. And on the exceptional 
stopping intervals  we perform 
the iterated quasi-periodization $\qper^{\vec 2}$ of the weights  $V\circ\psi\ci{J,I}$, 
$W\circ\psi\ci{J,I}$, $J=\Sinv(I)$; here $\vec 2 := (2, 2, 2, \ldots)$.

We get a new set of exceptional stopping intervals, and their total length will be $1/2$ of the 
total length of the previous stopping intervals.  On these new stopping intervals%
\footnote{the map $\Sinv$ is naturally extended to new stopping and starting intervals.} 
we again perform 
the iterated quasi-periodization $\qper^{\vec 2}$ of the corresponding weights, again treating 
these stopping intervals, and not their grandchildren, as starting ones (of the same order), get 
new stopping intervals, and so on. 

In every step of the remodeling iterations, we will get new 
exceptional stopping intervals, but their total measure will be  half of the measure of the 
previous stopping intervals.   

Therefore, inductively repeating this procedure
and taking the limit,%
\footnote{By the construction the limit exists a.e., and since $W$ and $V=W^{-1}$ have finite Haar 
expansion, the limit exists in (matrix-valued) $L^1$.} 
we will end up with the weights $\wt V$, $\wt W$ such that $\wt V=\wt 
W^{-1}$ a.e.%

We will denote the new remodeled weights as $\wt V := \REM^{\vec N} V$, $\wt W := \REM^{\vec N} W$.

\begin{rem}
We would like to emphasize that in the remodeling we only change the weights; the functions
$\wt\bff$, $\wt\bg$  will be the quasi-periodizations $\wt\bff=\qper^{\vec N} \bff$,
$\wt\bg=\qper^{\vec N} \bg$, and will not be changed any further.  We will use Lemma
\ref{l:HilbPer01a} below to estimate $\left(\HT \wt\bff, \wt\bg\right)\ci{L^2}$, so we will need to
pick appropriately large frequencies $N_k$ later.
As for the behavior of the weight, all the modifications after the first quasi-periodization do not 
influence estimates of $\left(\HT \wt\bff, \wt\bg\right)\ci{L^2}$, and we will not need arbitrarily 
large frequencies. We are performing the remodeling to make the weights $\wt V$ and $\wt W$ to be 
the inverses of each other, and for this purpose any frequency $N\ge2$ (the same for all 
exceptional intervals) will work. So above we just picked $N=2$. 
\end{rem}

\begin{rem}\label{r:remod}
{ As we discussed above, the remodeling in our construction was done by performing consecutively 
quasi-periodizations on some intervals on some functions with finite Haar expansion, and 
quasi-periodization is done applying the above Procedure \ref{proc:QP step}  finitely many times.   

Further analyzing the remodeling, we can see that the remodeling of the function $f$ can be 
obtained by 
starting with the function $\bE_1 f :=\La f \Ra\ci{I^0_+}\1\ci{I^0_+} + \La f 
\Ra\ci{I^0_-}\1\ci{I^0_-}$ (which is constant on intervals $I_{\pm}^0$), then 
applying inductively Procedure  \ref{proc:QP step} and taking the limit (a.e.~and/or in $L^1$).

Again, the order of starting intervals, where we apply the above procedure is not really 
important and it
only needs to agree with the ordering by inclusion of the intervals. Namely, we can apply the 
procedure to a starting interval only after we are done with all starting intervals containing it. }
\end{rem}

The picture below illustrates the iterated quasi-periodisation process and should be compared with the iterated periodisation process from Fig. \ref{fig: Periodisation}.

\begin{figure}[ht]
    \centering
    \vspace*{-0.4cm}\hspace*{-2cm}\includegraphics[width=18cm]{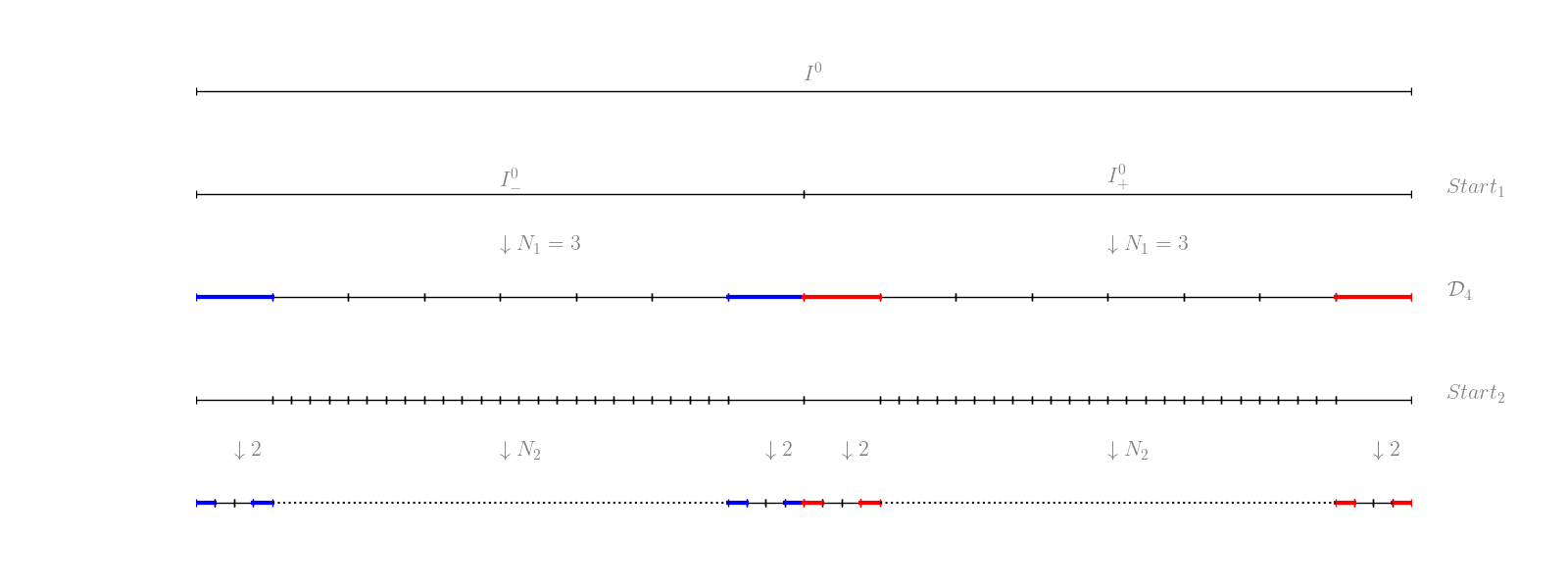}
    %\vspace*{-0.5cm}\hspace*{-2cm}\includesvg[width=18cm]{QuasiPeriodisation.svg}
    \caption{
    The first intervals on which quasi-periodisation is applied are again
    $\Start_1=\{ I^0_-,I^0_+\}$.
    The function $f|_{I^0_-}$ %restricted to $I^0_-$ 
    is copied $2^{N_1}-2=6$ times on the $6$ regular stopping intervals $\mathcal{R}(I^0_-)$ in $\mathcal{D}_4\cap{I^0_-}$ that are \emph{not} touching the boundary of $I^0_-$. The two blue intervals from $\mathcal{D}_4\cap{I^0_-}$ that are touching the boundary of $I^0_-$
    are the two exceptional stopping intervals $\mathcal{E}(I^0_-)$ below $I^0_-$,
    the red intervals are $\mathcal{E}(I^0_+)$.
    }
    \label{fig: QuasiPeriodisation}
\end{figure}

\begin{lm}\label{l:QP norms limit}
Let $\bff$, $\bg$, $W$, $W^{-1}$ be functions with finite Haar expansions, and let $W\ci{\!\!\vec 
N} :=\REM^{\vec 
N} W$ be the remodeling of the weight $W$. Then 
\begin{align*}
\left\| \qper^{\vec{N}} \bff \right\|\ci{ L^2(\REM^{\vec N} W)}   \longrightarrow 
\|\bff\|\ci{L^2(W)}, \qquad
\left\| \qper^{\vec{N}} \bg \right\|\ci{ L^2((\REM^{\vec N} W)^{-1})}   \longrightarrow 
\|\bg\|\ci{L^2(W^{-1})}.
\end{align*}
\end{lm}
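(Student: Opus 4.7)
The plan is to reduce the convergence to three elementary facts: (i) iterated periodization preserves the matrix-weighted $L^2$ norm \emph{exactly}; (ii) the quasi-periodization differs from the periodization only on a set of vanishing measure; (iii) the remodeling differs from the quasi-periodization only on nested exceptional sets of vanishing measure. Because $\bff$, $\bg$, $W$ and $W^{-1}$ all have finite Haar expansion, everything in sight is bounded, and the three discrepancies will be controlled by the measure of the exceptional set times a uniform constant.

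For (i) I use that $\per^{\vec N}$ is pointwise composition with a measure-preserving bijection $\Psi^{\vec N}$ of $I^0$, so it commutes with scalar multiplication, matrix-vector products, and inner products. In particular the scalar function $x\mapsto (W(x)\bff(x),\bff(x))\ci{\R^2}$ has finite Haar expansion and satisfies
\[
\per^{\vec N}\bigl[(W\bff,\bff)\ci{\R^2}\bigr]=(\per^{\vec N}(W)\cdot \per^{\vec N}(\bff),\per^{\vec N}(\bff))\ci{\R^2}.
\]
Integrating and applying Lemma \ref{l:P preserves norms} to this scalar function gives $\|\per^{\vec N}\bff\|\ci{L^2(\per^{\vec N}W)}^2=\|\bff\|\ci{L^2(W)}^2$ identically.

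For (ii), let $\cE^{\vec N}$ be the union of \emph{all} exceptional stopping intervals. At order $k$ each starting interval $I\in\Start_k$ contributes two exceptional intervals of length $|I|/2^{N_k}$, so their total measure across $\Start_k$ is at most $2\cdot 2^{-N_k}$ by \eqref{e:sum |I|}; summing gives $|\cE^{\vec N}|\le 2\sum_k 2^{-N_k}\to 0$ as $\vec N\to\infty$. The differences $\qper^{\vec N}\bff-\per^{\vec N}\bff$ and $\qper^{\vec N}W-\per^{\vec N}W$ are supported on $\cE^{\vec N}$, and since quasi-periodization only replaces values on exceptional intervals by averages of the corresponding periodization, we have the uniform pointwise bounds $|\qper^{\vec N}\bff|\le\|\bff\|_\infty$ and $\|\qper^{\vec N}W\|\le\|W\|_\infty$. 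The integrand $(\qper^{\vec N}W\cdot\qper^{\vec N}\bff,\qper^{\vec N}\bff)\ci{\R^2}$ therefore agrees with its periodization counterpart off $\cE^{\vec N}$ and is bounded by a constant $C=C(\|W\|_\infty,\|\bff\|_\infty)$ on $\cE^{\vec N}$, so
\[
\bigl|\|\qper^{\vec N}\bff\|\ci{L^2(\qper^{\vec N}W)}^2-\|\per^{\vec N}\bff\|\ci{L^2(\per^{\vec N}W)}^2\bigr|\le 2C\,|\cE^{\vec N}|\to 0.
\]

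For (iii), the remodeling $\REM^{\vec N}W$ differs from $\qper^{\vec N}W$ only inside $\cE^{\vec N}$: each subsequent step performs a further quasi-periodization of $W\circ\psi\ci{\cF(I),I}$ or $V\circ\psi\ci{\cF(I),I}$ on the current exceptional intervals, halving the total exceptional measure each time, so the set where $\REM^{\vec N}W\ne\qper^{\vec N}W$ has measure at most $2|\cE^{\vec N}|$ and the limit exists a.e.\ and in $L^1$. Since the recursion operates on matrix weights bounded uniformly by $\|W\|_\infty$ and $\|W^{-1}\|_\infty$ and each quasi-periodization preserves $L^\infty$ bounds of the underlying weights, $\REM^{\vec N}W$ is uniformly bounded in operator norm. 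Repeating the argument of step (ii) gives $|\|\qper^{\vec N}\bff\|\ci{L^2(\REM^{\vec N}W)}^2-\|\qper^{\vec N}\bff\|\ci{L^2(\qper^{\vec N}W)}^2|\lesssim |\cE^{\vec N}|\to 0$. Combining (i)--(iii) yields the first convergence; the second one follows by the identical argument applied to $\bg$ with $V=W^{-1}$ in place of $W$. The main (but still modest) obstacle is verifying the uniform $L^\infty$ bound on $\REM^{\vec N}W$ and its inverse, which requires tracking that each recursive quasi-periodization step is a rescaling and averaging of already-bounded matrix weights and so stays in a fixed bounded set of positive-definite matrices.
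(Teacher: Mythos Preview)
Your argument is correct, but the paper's proof is noticeably cleaner because it exploits one structural fact you overlook: the function $\qper^{\vec N}\bff$ is \emph{constant} on every exceptional stopping interval (by the very definition of quasi-periodization). Consequently, the contribution of an exceptional interval $I$ to $\|\qper^{\vec N}\bff\|_{L^2(U)}^2$ depends only on the \emph{average} $\langle U\rangle_I$ of the weight $U$ there. Since $\per^{\vec N}W$, $\qper^{\vec N}W$ and $\REM^{\vec N}W$ all have the same average over each exceptional interval (each later modification on $I$ is a quasi-periodization of a rescaled copy of $W$ with the correct mean), the paper obtains the \emph{exact} equalities
\[
\left\|\qper^{\vec N}\bff\right\|_{L^2(\REM^{\vec N}W)}
=\left\|\qper^{\vec N}\bff\right\|_{L^2(\qper^{\vec N}W)}
=\left\|\qper^{\vec N}\bff\right\|_{L^2(\per^{\vec N}W)},
\]
and only one limit (replacing $\qper^{\vec N}\bff$ by $\per^{\vec N}\bff$ via Lemma~\ref{l:PversusQP}) is needed. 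This bypasses your steps (ii)--(iii) entirely and, in particular, makes the ``modest obstacle'' you flag --- the uniform $L^\infty$ bound on $\REM^{\vec N}W$ through infinitely many recursive quasi-periodizations --- simply irrelevant. Your route works (the $L^\infty$ bound does hold, since every value of $\REM^{\vec N}W$ is either a value of $W$ or an average of $W$), but it trades a one-line observation for a measure-theoretic bookkeeping argument.
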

\begin{proof}
Since the function $\qper^{\vec{N}} \bff$ is constant on the exceptional stopping intervals, the 
contribution of these intervals to the norm $\left\| \qper^{\vec{N}} \bff \right\|\ci{ 
L^2(\REM^{\vec N} W)}$ depends only on the averages of the weight $\REM^{\vec N} W$ there. Therefore
\begin{align*}
\left\| \qper^{\vec{N}} \bff \right\|\ci{ L^2(\REM^{\vec N} W)} =
\left\| \qper^{\vec{N}} \bff \right\|\ci{ L^2(\qper^{\vec N} W)} =
\left\| \qper^{\vec{N}} \bff \right\|\ci{ L^2(\per^{\vec N} W)}. 
\end{align*}
We also trivially have $ \left\| \per^{\vec{N}} \bff \right\|\ci{ L^2(\per^{\vec N} W)} = 
\|\bff\|\ci{L^2(W)}$. Because of the finite Haar expansion the weight $W$ is bounded, and applying 
Lemma \ref{l:PversusQP} we get the first limit. 

Noticing that $(\REM^{\vec N}W)^{-1}= \REM^{\vec N} (W^{-1})$ we immediately get the second limit. 
\end{proof}

\subsection{Hilbert Transforms and periodization}

Recall that the Hilbert Transform on the real line $\HT=\HT^\R$ is defined as 
\begin{align*}
\HT^\R f(s) =\frac1\pi \text{p.v.}\int_\R \frac{f(t)}{s-t} \dd t
\end{align*}
and the Hilbert Transform $\HT^\T$ on the torus $\T :=\R\slash \Z$ (the periodic Hilbert Transform) 
is given by 
\begin{align*}
\HT^\T f (s) = \text{p.v.} \int_0^1 f(t) \cot\left(\pi\cdot (s-t)\right) \dd t; 
\end{align*}
here we naturally identify $\T$ with $I^0=[0,1)$ with the Lebesgue measure $\dd x$. We can also 
identify $\T$ with an arbitrary interval $I$ with the normalized Lebesgue measure $|I|^{-1} \dd x$. 
In this case the Hilbert Transform on the torus $\T$ is represented as  
\begin{align*}
\HT\ci{I}^\T f (s) = 
|I|^{-1}\text{p.v.} \int_I f(t) \cot\left(|I|^{-1}\pi\cdot (s-t)\right) \dd t, \qquad s\in I.  
\end{align*}

If we denote $L^2(I)= L^2(I, |I|^{-1}\dd x)$, then the representations $\HT\ci{I}^\T$ in $L^2(I)$  
and $\HT\ci{J}^\T$ in $L^2(J)$ are unitarily equivalent, 
\begin{align}\label{e:scale HT^T}
\HT\ci{I}^\T U\ci{I,J} = U\ci{I,J} \HT\ci{J}^\T,  
\intertext{where the unitary operator $U\ci{I,J}: L^2(J)\to L^2(I)$ is given by }
\label{e:scale HT^T 01}
U\ci{I,J} f = f\circ \psi\ci{J,I}. 
\end{align}

Define
\begin{align}\label{e:c12}
c_1  := \left(\HT^\T h\ci{I^0}, h\ci{I^0_+}\right)\ci{L^2(\T)}  , \qquad
c_2 := \left(\HT^\T h\ci{I^0_+}, h\ci{I^0_-}\right)\ci{L^2(\T)}.  
\end{align}
Recall, that extending a function $f\in L^2(I^0)$ periodically  to the 
real line $\R$, we naturally identify $I^0$ with the torus $\T=\R/\Z$ and the space 
$L^2(I^0)$ with $L^2(\T)$. 
The rotation of the torus $\T$ through $\pi$, i.e.~map $f(\fdot)\mapsto f(\fdot + 1/2)$ 
in $L^2(\R/\Z)) = L^2(I^0)$ is a unitary transformation in $L^2(I^0)$, mapping $h\ci{I^0}$ to 
$-h\ci{I^0}$, and $h\ci{I^0_+}$ to $h\ci{I^0_-}$, so we can write 
\begin{align}\label{e:c1a}
c_1 = - \left(\HT^\T  h\ci{I^0},  h\ci{I^0_-}\right)\ci{L^2(I^0)}.
\end{align}

Let the dyadic operator $\Hdy$ be defined as 
\begin{align}\label{e:mod-shift}
\Hdy:= c_1 (\sh - \sh^*) + c_2\shst . 
\end{align}
Let also
\begin{align}
\label{e:c0}
c_0:= \left(\HT^\R \1\ci{I^0_+}, \1\ci{I^0_-}\right)\ci{L^2(\T)}.
\end{align}

\begin{lm}\label{l:HilbPer01}
Let $\bff$, $\bg$ be functions on $I^0 =[0,1)$ with finite Haar expansion. Then
\begin{align*}
%\lim_{\vec N\to\infty}
 \left(\HT^\R \per^{\vec N} \bff, \per^{\vec N} \bg\right)_{L^2(\R)}
\longrightarrow  \left(\Hdy\bff, \bg\right)\ci{L^2(I^0)}  + 
c_0 \left[\left( \La \bff\Ra\ci{I^0_+}, \La \bg\Ra\ci{I^0_-} \right)\ci{\R^2}  - 
\left( \La \bff\Ra\ci{I^0_-}, \La \bg\Ra\ci{I^0_+} \right)\ci{\R^2}\right]   
\end{align*}
along some family of $\vec N\to\infty$ \tup{(}all $N_k\to\infty$\tup{)}. 
\end{lm}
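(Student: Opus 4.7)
The plan is to expand both $\bff$ and $\bg$ in second-order martingale differences at odd generations, $\bff = \bE_1 \bff + \sum_{J \in \cD\ti{odd}} \Delta\ci{J}^2 \bff$ (and analogously for $\bg$), and combine this with the linearity of $\per^{\vec N}$ and formula \eqref{e:P Delta f}. Since $\bE_1 \bff$ is constant on each of $I^0_\pm$, periodization leaves it unchanged, so $(\HT^\R \bE_1\per^{\vec N}\bff, \bE_1\per^{\vec N}\bg)\ci{L^2(\R)} = (\HT^\R \bE_1 \bff, \bE_1 \bg)\ci{L^2(\R)}$ is independent of $\vec N$. Expanding this piece using skew-adjointness of $\HT^\R$, the vanishing $(\HT^\R \1\ci{J}, \1\ci{J})\ci{L^2(\R)} = 0$, and the definition $c_0 = (\HT^\R \1\ci{I^0_+}, \1\ci{I^0_-})\ci{L^2}$ in \eqref{e:c0}, yields exactly the boundary term in the statement.

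For the remaining pairs $(J, J') \in \cD\ti{odd} \times \cD\ti{odd}$ we split into the diagonal $J = J'$ and off-diagonal $J \neq J'$ cases. For the diagonal case, fix $J \in \cD_{2k-1}$: formula \eqref{e:P Delta f} expresses $\per^{\vec N} \Delta\ci{J}^2 \bff$ as a sum over starting intervals $I \in \Start_k$ with $\cF(I) = J$ of compressed copies $\per\ci{I}^{N_k}\bigl((\Delta\ci{J}^2 \bff) \circ \psi\ci{J, I}\bigr)$. The key technical step I would establish is that for any mean-zero $\varphi, \psi$ with support in an interval $I$,
\[
\bigl(\HT^\R \per\ci{I}^{N} \varphi, \per\ci{I}^{N} \psi\bigr)\ci{L^2(\R)} \xrightarrow[N \to \infty]{} \bigl(\HT\ci{I}^\T \varphi, \psi\bigr)\ci{L^2(I, \dd x)}.
\]
This follows from the partial-fraction identity $|I|^{-1}\cot\bigl(\pi(s-t)/|I|\bigr) = \pi^{-1}\sum_{n \in \Z} (s-t-n|I|)^{-1}$ (principal value), which realizes the periodic kernel as the p.v.\ sum of translates of the Cauchy kernel; applying $\HT^\R$ to $\per\ci{I}^{N} \varphi$ yields a finite truncation of this sum, whose boundary errors wash out upon pairing with the mean-zero $\psi$. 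Applied to $\varphi = \Delta\ci{J}^2 \bff \circ \psi\ci{J, I}$ and the analogue for $\bg$, combined with the scale invariance \eqref{e:scale HT^T}--\eqref{e:scale HT^T 01} of $\HT^\T$ and the identity $\sum_{\cF(I) = J}|I| = |J|$, the total diagonal $J$-contribution converges to $(\HT\ci{J}^\T \Delta\ci{J}^2 \bff, \Delta\ci{J}^2 \bg)\ci{L^2(J, \dd x)}$. Expanding $\Delta\ci{J}^2$ in the Haar basis $\{h\ci{J}, h\ci{J_\pm}\}$ and invoking $(\HT\ci{J}^\T h\ci{J}, h\ci{J_\pm})\ci{L^2(J, \dd x)} = \pm c_1$ (via the scale invariance of $\HT^\T$ together with \eqref{e:c1a}) and $(\HT\ci{J}^\T h\ci{J_+}, h\ci{J_-})\ci{L^2(J, \dd x)} = c_2$, then summing over $J \in \cD\ti{odd}$, reproduces $(\Hdy \bff, \bg)\ci{L^2(I^0)}$ directly from the definition \eqref{e:mod-shift} of $\Hdy$.

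The hard part will be the vanishing in the limit of the off-diagonal contributions, namely both the $J \ne J'$ pairs and the cross terms between $\bE_1 \bff$ and the second-order differences. The mechanism is frequency separation: the periodization at frequency $N_k$ of a mean-zero function generates oscillations of period $\sim |I|/2^{N_k}$, and when paired with another ``block'' on a separated starting interval through the smooth kernel $1/(s-t)$, mean-zero cancellation produces a quantitative bound decaying as $N_k \to \infty$. By selecting the sequence $\vec N = (N_1, N_2, \ldots)$ to grow sufficiently rapidly at each level --- which is precisely the freedom behind the ``along some family of $\vec N \to \infty$'' qualifier --- one can sum these estimates and conclude that all such off-diagonal contributions tend to zero. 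This frequency-separation strategy is in the spirit of Bourgain \cite{Bo} referenced in the introduction.
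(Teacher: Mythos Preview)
Your proposal is correct and follows essentially the same approach as the paper: decompose via second-order martingale differences, identify the diagonal contribution with $\Hdy$ through the convergence $\HT^\R \per\ci{I}^N \to \HT\ci{I}^\T$ (the paper isolates this as Lemma~\ref{l:HilbPer02}, proved via periodic extension rather than your cotangent partial-fraction route, and then matches $\HT\ci{I}^\T$ with $\Hdy$ on $\ran\Delta\ci{I}^2$ in Lemma~\ref{l:HTvsT}), and kill all cross terms by an inductive choice of the $N_k$ exploiting weak convergence to $0$ of the high-frequency blocks. One organizational remark: your ``diagonal $J=J'$'' case still contains cross terms between \emph{distinct} starting intervals $I\ne I'$ with $\cF(I)=\cF(I')=J$, which your key technical step does not address; these are handled by exactly the same separation/weak-convergence mechanism you describe for the off-diagonal pairs, so the argument goes through, but be sure to account for them explicitly.
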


\begin{rem}\label{r:HilbPer convergence}
It is possible to show that in fact we have the convergence as $\vec N\to \infty$, but for our 
purposes convergence along some family is sufficient. 
\end{rem}

Using Lemma  \ref{l:PversusQP} we can trivially get from  
the above Lemma \ref{l:HilbPer01} 
\begin{lm}\label{l:HilbPer01a}
Let $\bff$, $\bg$ be functions on $I^0 =[0,1)$ with finite Haar expansion. Then
\begin{align*}
 \left(\HT^\R \qper^{\vec N} \bff, \qper^{\vec N} \bg\right)_{L^2(\R)}
\longrightarrow \left(\Hdy \bff, \bg\right)\ci{L^2(I^0)} +
c_0 \left[\left( \La \bff\Ra\ci{I^0_+}, \La \bg\Ra\ci{I^0_-} \right)\ci{\R^2}  - 
\left( \La \bff\Ra\ci{I^0_-}, \La \bg\Ra\ci{I^0_+} \right)\ci{\R^2}\right]  . 
\end{align*}
along some family of $\vec N\to\infty$ \tup{(}all $N_k\to\infty$\tup{)}. 
\end{lm}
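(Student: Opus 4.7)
The plan is to deduce the statement directly from Lemma \ref{l:HilbPer01} by comparing the periodization and quasi-periodization of $\bff$ and $\bg$ in $L^2(\R)$, and then using the boundedness of the Hilbert transform $\HT^\R$ on $L^2(\R)$.

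First, I would fix the family of $\vec N \to \infty$ along which the convergence in Lemma \ref{l:HilbPer01} holds; call this convergence statement $(\star)$. The target inner product on the left-hand side of $(\star)$ involves $\per^{\vec N}\bff$ and $\per^{\vec N}\bg$, whereas the lemma to be proved involves $\qper^{\vec N}\bff$ and $\qper^{\vec N}\bg$. I would then write the difference
\[
\bigl(\HT^\R \per^{\vec N} \bff, \per^{\vec N} \bg\bigr)_{L^2(\R)}
-
\bigl(\HT^\R \qper^{\vec N} \bff, \qper^{\vec N} \bg\bigr)_{L^2(\R)}
\]
as a telescoping sum
\[
\bigl(\HT^\R (\per^{\vec N}\bff - \qper^{\vec N}\bff), \per^{\vec N}\bg\bigr)_{L^2(\R)}
+
\bigl(\HT^\R \qper^{\vec N}\bff, \per^{\vec N}\bg - \qper^{\vec N}\bg\bigr)_{L^2(\R)}.
\]

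Next, I would bound each of the two terms using Cauchy--Schwarz together with the $L^2(\R)$-boundedness of $\HT^\R$. The norms $\|\per^{\vec N}\bg\|_{L^2(\R)} = \|\bg\|_{L^2(\R)}$ by Lemma \ref{l:P preserves norms}, and $\|\qper^{\vec N}\bff\|_{L^2(\R)}$ is uniformly bounded in $\vec N$ because $\bff$ has finite Haar expansion (and thus is bounded), so the exceptional intervals contribute only a harmless amount. The crucial factors are then $\|\per^{\vec N}\bff - \qper^{\vec N}\bff\|_{L^2(\R)}$ and $\|\per^{\vec N}\bg - \qper^{\vec N}\bg\|_{L^2(\R)}$, both of which tend to $0$ as $\vec N \to \infty$ by Lemma \ref{l:PversusQP}. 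Thus the difference above tends to $0$ along the chosen family.

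Combining this with $(\star)$ gives the stated convergence, along the same family of $\vec N$, to
\[
\bigl(\Hdy \bff, \bg\bigr)_{L^2(I^0)}
+ c_0 \Bigl[\bigl(\La\bff\Ra\ci{I^0_+}, \La\bg\Ra\ci{I^0_-}\bigr)_{\R^2}
- \bigl(\La\bff\Ra\ci{I^0_-}, \La\bg\Ra\ci{I^0_+}\bigr)_{\R^2}\Bigr],
\]
which is exactly the statement of the lemma. There is no serious obstacle here; the only delicate point is to confirm the uniform $L^2$ bounds on $\qper^{\vec N}\bff$ and $\per^{\vec N}\bg$, which rely on the fact that quasi-periodization differs from periodization only on a set whose total measure shrinks to $0$ while the functions stay bounded in $L^\infty$.
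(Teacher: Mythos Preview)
Your proposal is correct and follows exactly the approach the paper indicates: the paper simply states that Lemma~\ref{l:HilbPer01a} follows ``trivially'' from Lemma~\ref{l:HilbPer01} using Lemma~\ref{l:PversusQP}, and you have spelled out precisely that argument (telescoping the difference, applying Cauchy--Schwarz, boundedness of $\HT^\R$ on $L^2$, and the $L^2$-convergence from Lemma~\ref{l:PversusQP}).
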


In both lemmas above the first term $\left(\Hdy \bff, \bg\right)\ci{L^2(I^0)}$ is the essential one 
that will be used, and the second term is small in comparison.

To prove Lemma \ref{l:HilbPer01} we will need some additional results. The first one explains the 
connection between the operator $\Hdy $ and the periodic Hilbert Transform $\HT\ci{I}^\T$. 

\begin{lm}\label{l:HTvsT} Let $I\in\cD$. 
For all $f, g\in \ran\Delta\ci{I}^2$,
\begin{align}
\left(\HT\ci{I}^\T f, g\right)\ci{L^2(I)} = \left(\Hdy  f, g\right)\ci{L^2(I)}.
\end{align}
\end{lm}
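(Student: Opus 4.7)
My plan is to reduce the identity to checking finitely many matrix entries and then compute them explicitly on both sides. Since $\ran\Delta\ci{I}^2$ is three-dimensional, spanned by the Haar functions $h\ci{I}$, $h\ci{I_+}$, $h\ci{I_-}$, the identity reduces by bilinearity to matching the matrices of $\HT\ci{I}^\T$ and $\Hdy$ in this basis. Both operators are antisymmetric on $L^2(I)$ (the kernel $\cot(\pi\,\cdot)$ of $\HT\ci{I}^\T$ is odd, and $\sh-\sh^*$ and $\shst$ are manifestly antisymmetric), so I only have to verify the three off-diagonal entries indexed by the pairs $(h\ci{I}, h\ci{I_+})$, $(h\ci{I}, h\ci{I_-})$, $(h\ci{I_+}, h\ci{I_-})$.

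For the left-hand side, I will invoke the intertwining \eqref{e:scale HT^T}--\eqref{e:scale HT^T 01}. A short direct check using $h\ci{J} = |J|^{-1/2}\wh h\ci{J}$ and the affine map $\psi\ci{I^0, I}$ gives $U\ci{I^0, I} h\ci{I} = |I|^{-1/2} h\ci{I^0}$ and $U\ci{I^0, I} h\ci{I_\pm} = |I|^{-1/2} h\ci{I^0_\pm}$. Since $U\ci{I^0, I}: L^2(I) \to L^2(I^0)$ is unitary, the three matrix entries become $(\HT\ci{I}^\T h\ci{I}, h\ci{I_+})\ci{L^2(I)} = |I|^{-1} c_1$, together with $-|I|^{-1} c_1$ (using \eqref{e:c1a}) and $|I|^{-1} c_2$ (from the definition \eqref{e:c12} of $c_2$).

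For the right-hand side, I will take $I$ at an odd level of $\cD$, which is the relevant case for the applications since the iterated (quasi-)periodization delivers $\cF(I) \in \cD_{2k-1}$. Then the parent $\wh I$ and the children $I_\pm$ lie at even levels, so the sums defining $\sh$, $\sh^*$, $\shst$, all indexed over $\cD\ti{odd}$, retain only the term $J = I$. A quick Haar-basis calculation gives $\sh h\ci{I} = h\ci{I_+} - h\ci{I_-}$, $\sh^* h\ci{I_\pm} = \pm h\ci{I}$, and $\shst h\ci{I_\pm} = \pm h\ci{I_\mp}$, with all remaining images vanishing; hence $\Hdy h\ci{I} = c_1(h\ci{I_+} - h\ci{I_-})$ and $\Hdy h\ci{I_\pm} = \mp c_1 h\ci{I} \pm c_2 h\ci{I_\mp}$. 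The $L^2(I)$ pairings then produce exactly $|I|^{-1} c_1$, $-|I|^{-1} c_1$, and $|I|^{-1} c_2$, matching the left-hand side.

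I do not expect any deep obstacle; the argument is essentially bookkeeping. The main pitfalls to watch are (a) tracking parity so that $\sh$, $\sh^*$, $\shst$ hit back into $\ran\Delta\ci{I}^2$ rather than across the boundary of this subspace, (b) the factor $|I|^{-1}$ arising from the normalized-measure convention $L^2(I) = L^2(I, |I|^{-1} dx)$ combined with the Haar normalization in $L^2(dx)$, and (c) the sign conventions in \eqref{e:c12} and \eqref{e:c1a}. Once these are dispatched, the identity is immediate on the three entries and extends to the whole $\ran\Delta\ci{I}^2$ by linearity.
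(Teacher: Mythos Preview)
Your proposal is correct and follows essentially the same route as the paper: both arguments reduce to computing the $3\times 3$ matrices of $\HT\ci{I}^\T$ and of $\sh$, $\sh^*$, $\shst$ in the Haar basis $h\ci{I}, h\ci{I_\pm}$ of $\ran\Delta\ci{I}^2$, using the unitary equivalence \eqref{e:scale HT^T}--\eqref{e:scale HT^T 01} to identify the entries $c_1$, $c_2$; your use of antisymmetry to cut the work to three off-diagonal entries is a minor streamlining. Your explicit restriction to $I\in\cD\ti{odd}$ is in fact a necessary observation that the paper's proof uses tacitly (for even $I$ the odd shifts $\sh$, $\sh^*$, $\shst$ contribute nothing to $(\Hdy f,g)\ci{L^2(I)}$ while $\HT\ci{I}^\T$ does not vanish there), and it is exactly the case needed in the application, where the lemma is invoked only for $J=\Sinv(I)\in\cD_{2k-1}$.
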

\begin{proof}
First of all notice that the functions $|I|^{1/2} h\ci{I}$, $|I|^{1/2} h\ci{I_-}$, $|I|^{1/2} 
h\ci{I_+}$ form an orthonormal basis for $\ran \Delta\ci{I}^2$ in the norm of $L^2(I)$, and that
\begin{align*}
h\ci{I^0}= |I|^{1/2} h\ci{I}\circ \psi\ci{I,I^0}, \qquad
h\ci{I^0_\pm}= |I|^{1/2} h\ci{I_\pm}\circ \psi\ci{I,I^0}. 
\end{align*}

Identities 
\eqref{e:scale HT^T} mean that $\HT\ci{I}^\T$ in $L^2(I)$ and $\HT\ci{I^0}^\T$ in $L^2(I^0)$ are 
unitarily equivalent
\begin{align*}
\HT\ci{I^0}^\T U\ci{I^0, I}   = U\ci{I^0,I} \HT\ci{I}^\T, 
\end{align*}
where, recall,
$U\ci{I^0, I} f = f \circ \psi\ci{I, I^0}$; note that $U\ci{I^0, I}$ is a unitary 
operator from $L^2(I)$ to $L^2(I^0)$.
Therefore the definitions \eqref{e:c12}, \eqref{e:c1a} of 
$c_1$, $c_2$ can be rewritten as
\begin{align}\label{e:c1}
c_1 & := \left(\HT^\T h\ci{I^0}, h\ci{I^0_+}\right)\ci{L^2(\T)}  
 =  |I|\left(\HT\ci{I}^\T  h\ci{I},  h\ci{I_+}\right)\ci{L^2(I)}
= -\left(\HT^\T h\ci{I^0}, h\ci{I^0_-}\right)\ci{L^2(\T)}   
, \\   \label{e:c2}
c_2 & := \left(\HT^\T h\ci{I^0_+}, h\ci{I^0_-}\right)\ci{L^2(\T)}  
 = |I| \left(\HT\ci{I}^\T  h\ci{I_+},  h\ci{I_-}\right)\ci{L^2(I)}.
\end{align}

Decompose $f, g\in\ran\Delta\ci{I}^2$ as
\begin{align*}
f & = x\ci{I} |I|^{1/2}h\ci{I} + x\ci{I_-} |I|^{1/2}h\ci{I_-} + x\ci{I_+} |I|^{1/2}h\ci{I_+} ,\\
g & = y\ci{I} |I|^{1/2}h\ci{I} + y\ci{I_-} |I|^{1/2}h\ci{I_-} + y\ci{I_+} |I|^{1/2}h\ci{I_+} .
\end{align*}
Then
\begin{align*}
\left(\HT\ci{I}^\T f, g\right)\ci{L^2(I)} &= 
|I|\left(\HT\ci{I}^\T \left(x\ci{I}  h\ci{I} + x\ci{I_-}  h\ci{I_-} + x\ci{I_+} 
 h\ci{I_+}\right) , y\ci{I}  h\ci{I} + y\ci{I_-}  h\ci{I_-} + y\ci{I_+} 
 h\ci{I_+}  \right)\ci{L^2(I)} \\
&=
|I|\begin{pmatrix}
y\ci{I}\\ y\ci{I_-} \\ y\ci{I_+} 
\end{pmatrix}^T
\begin{pmatrix}
\left( \HT\ci{I}^\T h\ci{I} , h\ci{I} \right)\ci{L^2(I)} & \left( \HT\ci{I}^\T h\ci{I_-} , h\ci{I} 
\right)\ci{L^2(I)} & \left( \HT\ci{I}^\T h\ci{I_+} , h\ci{I} \right)\ci{L^2(I)}    \\
\left( \HT\ci{I}^\T h\ci{I} , h\ci{I_-} \right)\ci{L^2(I)} & \left( \HT\ci{I}^\T h\ci{I_-} , 
h\ci{I_-} \right)\ci{L^2(I)} & \left( \HT\ci{I}^\T h\ci{I_+} , h\ci{I_-} \right)\ci{L^2(I)} 
\\
\left( \HT\ci{I}^\T h\ci{I} , h\ci{I_+} \right)\ci{L^2(I)} & \left( \HT\ci{I}^\T h\ci{I_-} , 
h\ci{I_+} \right)\ci{L^2(I)} & \left( \HT\ci{I}^\T h\ci{I_+} , h\ci{I_+} \right)\ci{L^2(I)}
\end{pmatrix}
\begin{pmatrix}
x\ci{I}\\ x\ci{I_-} \\ x\ci{I_+} 
\end{pmatrix}
\\ & = 
\phantom{|I|}
\begin{pmatrix}
y\ci{I}\\ y\ci{I_-} \\ y\ci{I_+} 
\end{pmatrix}^T
\begin{pmatrix}
0 & c_1 & -c_1 \\
-c_1 & 0 & c_2 \\
c_1 &  -c_2 & 0
\end{pmatrix}
\begin{pmatrix}
x\ci{I}\\ x\ci{I_-} \\ x\ci{I_+} 
\end{pmatrix}.
\end{align*}
Applying similar computations to $\left(\sh f, g\right)\ci{L^2(I)}$, $\left(\sh^* f, 
g\right)\ci{L^2(I)}$ and $\left(\shst f, g\right)\ci{L^2(I)}$ we get 
\begin{align*}
\left(\sh f, g\right)\ci{L^2(I)}
& = 
\begin{pmatrix}
y\ci{I}\\ y\ci{I_-} \\ y\ci{I_+} 
\end{pmatrix}^T
\begin{pmatrix}
0 & 0 & 0 \\
-1& 0 & 0 \\
1 &  0 & 0
\end{pmatrix}
\begin{pmatrix}
x\ci{I}\\ x\ci{I_-} \\ x\ci{I_+} 
\end{pmatrix},
\\
\left(\sh^* f, g\right)\ci{L^2(I)} 
&= 
\begin{pmatrix}
y\ci{I}\\ y\ci{I_-} \\ y\ci{I_+} 
\end{pmatrix}^T
\begin{pmatrix}
0 & -1 & 1 \\
0& 0 & 0 \\
0 &  0 & 0
\end{pmatrix}
\begin{pmatrix}
x\ci{I}\\ x\ci{I_-} \\ x\ci{I_+} 
\end{pmatrix},
\\
\left(\shst f, g\right)\ci{L^2(I)} 
&= 
\begin{pmatrix}
y\ci{I}\\ y\ci{I_-} \\ y\ci{I_+} 
\end{pmatrix}^T
\begin{pmatrix}
0 & 0  & 0 \\
0 & 0  & 1 \\
0 & -1 & 0
\end{pmatrix}
\begin{pmatrix}
x\ci{I}\\ x\ci{I_-} \\ x\ci{I_+} 
\end{pmatrix},
\end{align*}
which gives the desired conclusion. 
\end{proof}

\begin{lm}\label{l:HilbPer02}
Let $f$ be a function on an interval $I$, $\La f\Ra\ci{I}=0$ with finite Haar expansion on $I$, 
extended by $0$ outside of $I$. Then 
\begin{align}\label{e:Per HT 01}
\lim_{ N\to \infty}\left\| \HT^\R  \per^{N}_{I} f - \per^{N}_{I} \HT\ci{I}^\T f  
\right\|_{L^2(I)} 
= 0, 
\qquad 
\lim_{ N\to \infty}\left\|\1\ci{\R\setminus I} \HT^\R \per^{N}_{I} f  \right\|_{L^2(\R)} = 0.
\end{align}
\end{lm}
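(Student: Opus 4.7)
The plan is to derive both limits from a single $L^2$ estimate by exploiting that $\HT^\R$ is an isometry of $L^2(\R)$. Set $g := \per\ci{I}^N f$, which is supported on $I$ with $\|g\|\ci{L^2(\R)} = \|g\|\ci{L^2(I)} = \|f\|\ci{L^2(I)}$ by Lemma \ref{l:P preserves norms}. The Pythagorean identity
\[
\|\1\ci{I}\,\HT^\R g\|\ci{L^2(I)}^2 + \|\1\ci{\R\setminus I}\,\HT^\R g\|\ci{L^2}^2 \;=\; \|\HT^\R g\|\ci{L^2(\R)}^2 \;=\; \|f\|\ci{L^2(I)}^2,
\]
together with the fact that $\HT\ci{I}^\T$ is unitary on the mean-zero subspace of $L^2(I)$ (so $\|\per\ci{I}^N \HT\ci{I}^\T f\|\ci{L^2(I)} = \|\HT\ci{I}^\T f\|\ci{L^2(I)} = \|f\|\ci{L^2(I)}$), shows that the second convergence in \eqref{e:Per HT 01} follows immediately from the first: once $\1\ci{I}\HT^\R g$ converges in $L^2(I)$ to $\per\ci{I}^N \HT\ci{I}^\T f$, which has norm $\|f\|\ci{L^2(I)}$, the mass of $\HT^\R g$ outside $I$ must tend to zero.

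For the first convergence, the key step is the intertwining identity
\[
\HT\ci{I}^\T \,\per\ci{I}^N f \,=\, \per\ci{I}^N\,\HT\ci{I}^\T f,
\]
which holds because $\la f\ra\ci{I} = 0$. Viewing $g = \per\ci{I}^N f$, extended $|I|$-periodically to $\R$, it is in fact $T$-periodic with $T = |I|/2^N$. In the $|I|$-Fourier expansion of $g$ only frequencies that are multiples of $2^N$ appear, and on these frequencies the multipliers defining $\HT\ci{I}^\T$ and the period-$T$ Hilbert transform agree. Hence $\HT\ci{I}^\T g$ restricted to any $J \in \ch^N(I)$ coincides with $\HT\ci{\!J}^\T(g|\ci{\!J})$, and the scaling identity \eqref{e:scale HT^T}--\eqref{e:scale HT^T 01} gives $\HT\ci{\!J}^\T(g|\ci{\!J}) = (\HT\ci{I}^\T f) \circ \psi\ci{I,J}$; gluing these over $J\in\ch^N(I)$ produces $\per\ci{I}^N \HT\ci{I}^\T f$.

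It then remains to compare $\HT\ci{I}^\T g$ with $\HT^\R g|\ci{I}$. Writing $\widetilde K(u) := |I|^{-1}\cot(\pi u/|I|) - 1/(\pi u)$, the pole at $u = 0$ is removable and $\widetilde K$ is smooth (hence Lipschitz) on the compact set of relevant differences $\{u = s-t : s,t \in I\}$. Therefore
\[
(\HT\ci{I}^\T g - \HT^\R g)(s) \,=\, \int_I g(t)\,\widetilde K(s-t)\,\dd t, \qquad s \in I.
\]
Since $\la f \ra\ci{I} = 0$, each restriction $g|\ci{\!J} = f\circ\psi\ci{I,J}$ has mean zero on $J$; subtracting the constant $\widetilde K(s - c\ci{\!J})$ on each $J \in \ch^N(I)$ (center $c\ci{\!J}$) and combining the Lipschitz bound $|\widetilde K(s-t) - \widetilde K(s-c\ci{\!J})| \lesssim T$ with Cauchy--Schwarz in $J$ and over the $2^N$ intervals in $\ch^N(I)$ produces the uniform estimate $|(\HT\ci{I}^\T g - \HT^\R g)(s)| \lesssim T\,\|f\|\ci{L^2(I)}$, which vanishes as $N \to \infty$ and yields the first limit.

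The main obstacle is the intertwining identity in the second paragraph; one must carefully verify via the Fourier picture that the periodic Hilbert transforms at the two scales $|I|$ and $T$ agree on the doubly-periodic mean-zero function $g$. Once this is secured, the kernel comparison is essentially routine, and the Pythagorean trick in the first paragraph then delivers the second limit at no extra cost.
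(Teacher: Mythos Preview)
Your Pythagorean argument deducing the second limit from the first is correct and more efficient than the paper's separate treatment, and the intertwining identity $\HT\ci{I}^\T \per\ci{I}^N f = \per\ci{I}^N \HT\ci{I}^\T f$ is verified correctly via the Fourier picture.

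However, the kernel comparison step contains a genuine gap. The difference kernel $\widetilde K(u) = |I|^{-1}\cot(\pi u/|I|) - 1/(\pi u)$ is indeed smooth at $u=0$, but it is \emph{not} Lipschitz, nor even bounded, on the set of relevant differences $\{s-t : s,t\in I\} = (-|I|,|I|)$: the cotangent has simple poles at $u = \pm|I|$ which are not cancelled by $1/(\pi u)$. Consequently your uniform pointwise estimate $|(\HT\ci{I}^\T g - \HT^\R g)(s)| \lesssim T\,\|f\|\ci{L^2(I)}$ fails when $s$ is close to one endpoint of $I$ and the interval $J\in\ch^N(I)$ in the sum touches the opposite endpoint. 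The gap can be repaired, for instance by restricting $s$ to the set at distance $\ge\varepsilon$ from $\partial I$ (where the Lipschitz constant of $\widetilde K$ is $O(\varepsilon^{-2})$, yielding a bound $O(T\varepsilon^{-2})$ on that set) and controlling the $L^2$ contribution from the $\varepsilon$-neighborhood of $\partial I$ separately; but this additional layer is not present in your proposal.

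The paper sidesteps this difficulty by a different decomposition: it extends $\per\ci{I}^N f$ by full $|I|$-periodicity to a function $g^N$ on all of $\R$, uses the identity $\HT^\R g^N|\ci{I} = \per\ci{I}^N \HT\ci{I}^\T f$ (checked on exponentials), and then shows that the contribution $\1\ci{I}\HT^\R(\1\ci{\R\setminus I}\,g^N)$ vanishes as $N\to\infty$ by exploiting the cancellation of $g^N$ over each short period against the \emph{smooth} real Hilbert kernel at positive distance. The poles at $u=\pm|I|$ that derail your argument correspond precisely to the nearest-neighbor copies of $f$ in this periodic extension, and the paper absorbs them cleanly into the tail estimate.
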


\begin{proof}%[Proof of Lemma \ref{l:HilbPer02}]
Since the Hilbert Transforms $\HT^\R$ and $\HT\ci{I}^\T$ commute with rescaling and translations, 
it is sufficient to prove the lemma for $I=I^0=[0,1)$. 

Let $g^N$ be the function $\per^{N}_{I^0} f$ extended by $1$-periodicity to $\R$. Then for any 
$f\in L^2(I^0)$ we have on $I^0$
\begin{align*}
\HT^\R g^N = \per^{N}_{I^0} \HT^\T f;  
\end{align*}
the identity is trivial for the exponentials $e^{2\pi i k x}$, $k\in \Z$, and extends to 
$L^2(I^0)$ by continuity. We then need to show that 
\begin{align}\label{e:cross int 01}
\left\| \1\ci{I^0} \HT^\R (\1\ci{\R\setminus I^0})  \right\| \to 0 \qquad \text{as } N\to\infty. 
\end{align}
But this is an easy exercise in basic analysis. Namely, denote $I^0_m:= [-2^{-m}, 1+ 2^{-m}]$. Then 
using the fact that $\La g^N \Ra\ci{I} = 0$ for any interval $I$ of length $2^{-N}$, we can easily 
see that for a fixed $m\in\N$
\begin{align*}
\HT^\R (\1\ci{\R\setminus I^0_m})  \rightrightarrows 0 \quad \text{on } I^0,  
\qquad \text{as }N\to\infty . 
\end{align*}
But $\| 1\ci{I^0_m\setminus I^0} f \|\ci{L^2}$ can be made arbitrarily small, and together with the 
boundedness of $\HT^\R$ this implies \eqref{e:cross int 01}. Thus, the first identity in \eqref{e:Per 
HT 01} is proved. 

The second identity in \eqref{e:Per HT 01} is proved similarly to \eqref{e:cross int 01}. 
\end{proof}

\begin{proof}[Proof of Lemma \ref{l:HilbPer01}]
Let $\Start = \Start^{\vec N}$ be the collection of all starting intervals appearing in the 
iterated periodization, and let $\Start_k = \Start^{\vec N}_k$ be the collection of all starting 
intervals of order $k$. Since the periodization does not change constant functions, using the 
alternative description of the periodization in Section \ref{s:alt per} we can write
\begin{align*}
\per^{\vec N} \bff & = \bE\ci{I^0_+} \bff + \bE\ci{I^0_-} \bff + \sum_{k\ge 1}\ \sum_{I\in\Start_k} 
\per^{N_k}_I \left(\left(\Delta\ci{\Sinv(I)}^2 \bff \right)\circ\psi\ci{\Sinv(I), I}\right), \\
 & =: \bE_1 \bff + \sum_{I\in \Start} D\ci{I} \bff,
\end{align*}
and similarly for $\per^{\vec N}\bg$.  Here, to simplify the write-up, we do not explicitly write 
the 
dependence of 
\begin{align}\label{e:DI f}
D\ci{I}\bff := \per^{N_k}_I \left(\left(\Delta\ci{\Sinv(I)}^2 \bff \right)\circ\psi\ci{\Sinv(I), 
I}\right)
\end{align}
on the generation $k$ and the indices $N_j$, $j\le k$, but we will 
have it in mind.

Then we can write
\begin{align*}
\left(\HT^\R \per^{\vec N} \bff, \per^{\vec N} \bg\right)_{L^2(\R)} =
\left(\HT^\R\bE_1 \bff, \bE_1 \bg\right)\ci{L^2(I^0)}  
& + \sum_{I\in\Start} \left(\HT^\R D\ci{I} \bff, D\ci{I} \bg\right)\ci{L^2(I^0)}
\\
&+ \text{ cross terms.}
\end{align*}
We will show later that we can pick the $N_k$ sufficiently large, so the sum of the cross terms is 
arbitrarily small. As for now, let us compute the limit of the above sum. We fix $I\in \Start_k$, 
and let $J=\Sinv(I)\in\cD_{2k-1} $. Denote
\begin{align*}
f\ci{I}:=\left(\Delta\ci{J}^2 \bff \right)\circ\psi\ci{J, I}, \qquad
g\ci{I}:=\left(\Delta\ci{J}^2 \bg \right)\circ\psi\ci{J, I} . 
\end{align*}
Trivially, $f\ci{I}, g\ci{I}\in \ran \Delta\ci{I}^2$ and for a stopping interval $I$ of order $k$ 
we have 
\[
D\ci{I} \bff = \per\ci{I}^{N_k} f\ci I, \qquad D\ci{I} \bg = \per\ci{I}^{N_k} g\ci I . 
\]
For one given $I$ in the sum above, the term of interest writes also
\[
\left(\HT^\R D\ci{I} \bff, D\ci{I} \bg\right)\ci{L^2(I^0)} =  \left(\HT^\R D\ci{I} \bff, 
D\ci{I} \bg\right)\ci{L^2(I)} |I|,
\]
where asymptotically
\begin{align*}
 \left(\HT^\R D\ci{I} \bff, D\ci{I} \bg\right)\ci{L^2(I)} 
& = \left(\HT^\R \per\ci{I}^{N_k} f\ci{I} , \per\ci{I}^{N_k} g\ci{I} \right)\ci{L^2(I)} 
\\
&\to \left( \per\ci{I}^{N_k} \HT\ci{I}^\T  f\ci{I} , \per\ci{I}^{N_k} g\ci{I} \right)\ci{L^2(I)} 
\\
& = \left(  \HT\ci{I}^\T  f\ci{I} ,  g\ci{I} \right)\ci{L^2(I)}
%% \\ & = \left(  T  f\ci{I} ,  g\ci{I} \right)\ci{L^2(I)} 
= \left(  \HT\ci{I}^\T  \left(\Delta\ci{J}^2 \bff \right)\circ\psi\ci{J, I} ,
            \left(\Delta\ci{J}^2 \bg \right)\circ\psi\ci{J, I} \right) \ci{L^2(I)}
\\
& = \left(  \HT\ci{J}^\T  \Delta\ci{J}^2 \bff, \Delta\ci{J}^2 \bg \right) \ci{L^2(J)}  
= \left(\Hdy  \Delta\ci{J}^2 \bff, \Delta\ci{J}^2 \bg \right)\ci{L^2(J)} ,
\end{align*}
and where the last equality follows from Lemma \ref{l:HTvsT}.

To conclude with the diagonal terms, we collect contributions of all starting intervals $I\in 
\Start$ in the main term and 
estimate asymptotically when $N \to \infty$,
\begin{align*}
    \sum_{I\in\Start} \left(\HT^\R D\ci{I} \bff, D\ci{I} \bg\right)\ci{L^2(I^0)}
    & = \sum_{k \ge 1}\sum_{I\in\Start_k} \left(\HT^\R D\ci{I} \bff, D\ci{I} \bg\right)\ci{L^2(I^0)}
    \\
    & = \sum_{k \ge 1}\sum_{I\in\Start_k} \left(\HT^\R D\ci{I} \bff, D\ci{I} \bg\right)\ci{L^2(I)} 
    |I|
    \\
    & \to  \sum_{k \ge 1}\sum_{I\in\Start_k} \left(  \Hdy   \Delta\ci{\Sinv(I)}^2 \bff ,  
    \Delta\ci{\Sinv(I)}^2 \bg \right)\ci{L^2(\Sinv(I))} |I|
    \\
    & =  \sum_{k \ge 1}\sum_{J\in\cD_{2k-1}} \sum_{I:\Sinv(I)=J} \left(  \Hdy   \Delta\ci{J}^2 \bff 
    ,  \Delta\ci{J}^2 \bg \right)\ci{L^2(J} |I|
    \\
    & = \sum_{J\in\cD\ti{odd}} \left(  \Hdy   \Delta\ci{J}^2 \bff ,  \Delta\ci{J}^2 \bg 
    \right)\ci{L^2(J)}  |J|
    = \left(  \Hdy    \bff ,   \bg \right)\ci{L^2(I^0)} \,;
\end{align*}
here we used that the total measure of all starting intervals $I$ such that $\Sinv(I) =J$ is 
exactly $|J|$.

Finally, we can easily see that 
\begin{align*}
\left(\HT^\R \bE_1 \bff, \bE_1 \bg  \right)\ci{L^2(I^0)}  = c_0 \left[\left( \La \bff\Ra\ci{I^0_+}, 
\La 
\bg\Ra\ci{I^0_-} \right)\ci{\R^2}  - 
\left( \La \bff\Ra\ci{I^0_-}, \La \bg\Ra\ci{I^0_+} \right)\ci{\R^2}\right] , 
\end{align*}
so if we can make the sum of the cross terms as small as we want, the lemma is proved. 

To complete the proof, let us show that asymptotically, we can ignore the cross terms. 
Since we have finitely many cross terms, it is sufficient to show that given small  $\e>0$ 
we can make each cross term bounded by $\e$. 

First, using the second limit in Lemma \ref{l:HilbPer02}, we can see that for all starting 
intervals 
$I$, $J$ of order $1$, $I\ne J$,   and for all sufficiently large $N_1$, 
\begin{align}\label{e:DI cross}
\left| \left(\HT^\R D\ci{I}\bff, D\ci{J}\bg \right)\ci{L^2(I^0)}\right| <\e. 
\end{align}
We know that functions $D\ci{I}\bff$, $D\ci{I}\bg$ are just periodizations of appropriate 
functions, so for stopping intervals $I$ of order $1$,  
\[
D\ci{I}\bff\to0, \quad D\ci{I}\bg \to 0 \qquad \text{weakly as } N_1\to\infty. 
\]
That means that for all sufficiently large $N_1$, 
\[
\left| \left(\HT^\R D\ci{I}\bff, \bE_1\bg \right)\ci{L^2(I^0)}\right| <\e, \qquad
\left| \left(\HT^\R \bE_1\bff, D\ci{I}\bg \right)\ci{L^2(I^0)}\right| <\e
\]
for all starting intervals $I$ of order $1$. 

Let us fix $N_1$ such that all the inequalities hold. 

We  
then proceed by induction. We assume that we found indices $N_1, \ldots, N_{k-1}$ such that all 
the cross terms involving (only) $\bE_1$ and $D\ci{I'}$ with the starting intervals $I'$ of rank 
$\le k-1$ have the desired estimates. We now consider all the vectors 
\begin{align}\label{e:fix vectors}
\bE_1\bff, \quad \bE_1\bg, \quad D\ci{I'} \bff, \quad D\ci{I''} \bg 
\end{align}
with $I$ of rank $\le k-1$, to be fixed at this step. 

Again, using the second limit in Lemma \ref{l:HilbPer02}, we can see that for all starting 
intervals 
$I$, $J$ of order $k$, $I\ne J$, and for all sufficiently large $N_k$, the inequality \eqref{e:DI 
cross} holds. 

Since for all starting intervals $I$ of rank $k$
\[
D\ci{I}\bff\to0, \quad D\ci{I}\bg \to 0 \qquad \text{weakly as } N_k\to\infty,  
\]
we can find sufficiently large $N_k$ such that all the inner products of $\HT^\R D\ci{J}\bff$, 
$\HT^\R D\ci{J}\bg$ with the vectors fixed in \eqref{e:fix vectors} are bounded by $\e$. 

Thus we can ignore the cross terms, and the lemma is proved.
\end{proof}

\section{Lower bound for the Hilbert Transform} \label{s:Lower bound Hilbert}

\begin{proof}[Proof of Theorem \ref{MainTheorem1}]

The proof of the main theorem is now in order. We want to find functions $\wt f\in 
L^2(W)$, $\wt g\in L^2(W)$,  $\|\wt f\|\ci{L^2(W)} = \|\wt g\|\ci{L^2(W)} =1$ such that for 
$\HT=\HT^\R$
\begin{align*}
\left(\HT \wt f, \wt g\right)\ci{L^2(W)} \gtrsim \bcQ^{3/2}. 
\end{align*}
Denoting  $\wt \bg := W \wt g$  (and $\wt \bff:= \wt f$  for 
consistency of notation), we can see that 
the above estimate is 
equivalent to 
\begin{align}\label{e:biest Hilb 02}
\left(\HT \wt \bff, \wt \bg\right)\ci{L^2(I^0)} \gtrsim \bcQ^{3/2}, \qquad 
\| \wt \bff\|\ci{L^2(W)} =  \|\wt \bg\|\ci{L^2(W^{-1})} = 1 
\end{align}
(with the same implied constant).  

Recall that from Lemma \ref{l:HilbPer01a}, we have 
for any $\bff$, $\bg$ with finite Haar expansion
\begin{align}\label{e:HilbPer01b}
 \left(\HT^\R \qper^{\vec N} \bff, \qper^{\vec N} \bg\right)_{L^2(\R)}
\longrightarrow \left(\Hdy\bff, \bg\right)\ci{L^2(I^0)} +
c_0 \left[\left( \La \bff\Ra\ci{I^0_+}, \La \bg\Ra\ci{I^0_-} \right)\ci{\R^2}  - 
\left( \La \bff\Ra\ci{I^0_-}, \La \bg\Ra\ci{I^0_+} \right)\ci{\R^2}\right]  . 
\end{align}
We also know,  see \eqref{e:biest02}, that there exist  $\bff$, $\bg$, 
% with finite Haar expansion, 
$\|\bff\|\ci{L^2(W)} =  \|\bg\|\ci{L^2(W^{-1})} = 1$,   such that 
\begin{align}\label{e:limitHQP} 
\left(\Hdy \bff, \bg\right)\ci{L^2(I^0)} %= \left(\Hdy  W^{-1} f, g\right)\ci{L^2(W)} 
\gtrsim \bcQ^{3/2}.
\end{align}

In our example the functions $\bff$ and $\bg$ were explicitly constructed, but in what follows 
we only need the estimate \eqref{e:limitHQP}.  

Trivially, by taking a slightly smaller implicit constant in the estimate \eqref{e:limitHQP} we can 
always assume that $\bff$ and $\bg$ have finite Haar expansions  (so \eqref{e:HilbPer01b} 
also holds). 

Let us show that the second term in \eqref{e:HilbPer01b} is negligible.  For that it is enough to 
show that for any $\bff$, $\bg$, $\|\bff\|\ci{L^2(W)} =  \|\bg\|\ci{L^2(W^{-1})} = 1$ we have 
\begin{align}\label{e:small (f,g)}
\left| \left(\La \bff\Ra\ci{I^0_+} , \La \bg\Ra\ci{I^0_-} \right)_{\R^2}\right|, \ 
\left| \left( \La \bff\Ra\ci{I^0_-} , \La \bg\Ra\ci{I^0_+}\right)_{\R^2} \right| \le 4 \bcQ^{1/2}. 
\end{align}
But the quantity $ \left\| \La W\Ra\ci{I}^{1/2} \La W^{-1}\Ra\ci{I}^{1/2}\right\|$ is exactly 
the norm of the averaging operator $f\mapsto \bE\ci{I}f = \1\ci{I} \La f\Ra\ci{I}$, so for any 
$\hat\bff$, $\hat\bg$
\begin{align*}
\left|\left(\La \hat\bff \Ra\ci{I^0} , \La \hat\bg \Ra\ci{I^0}\right)_{\R^2} \right| \le 
\bcQ^{1/2} \|\hat\bff\|\ci{L^2(W)} \|\hat\bg\|\ci{L^2(W^{-1})}  . 
\end{align*}
Taking $\hat\bff:= \1\ci{I^0_+} \bff$,  $\hat\bg:= \1\ci{I^0_-} \bg$ we see that
\begin{align*}
\La \bff \Ra\ci{I^0_+} = 2 \La \hat\bff \Ra\ci{I^0}, \qquad 
\La \bg \Ra\ci{I^0_-} = 2 \La \hat\bff \Ra\ci{I^0} .
\end{align*}
Noticing that $\|\hat\bff\|\ci{L^2(W)}\le \|\bff\|\ci{L^2(W)}$, $\|\hat\bg\|\ci{L^2(W^{-1})}\le 
\|\bg\|\ci{L^2(W^{-1})}$ we immediately get the first inequality in \eqref{e:small (f,g)}. Taking 
$\hat\bff:= \1\ci{I^0_-} \bff$,  $\hat\bg:= \1\ci{I^0_+} \bg$ we get the second one. % \hfill\qed

Therefore the last term in \eqref{e:HilbPer01b} is bounded by $ 4\bcQ^{1/2} \ll \bcQ^{3/2}$
so for large $\bcQ$ it is negligible. This means that for sufficiently large $\bcQ$  
\begin{align*}
\left(\HT^\R \qper^{\vec N} \bff, \qper^{\vec N} \bg\right)_{L^2(\R)} \gtrsim \bcQ^{3/2}. 
\end{align*}
But by Lemma \ref{l:QP norms limit}
\begin{align*}
\left\| \qper^{\vec{N}} \bff \right\|\ci{ L^2(\REM^{\vec N} W)}   \longrightarrow 
\|\bff\|\ci{L^2(W)}=1, \qquad
\left\| \qper^{\vec{N}} \bg \right\|\ci{ L^2((\REM^{\vec N} W)^{-1})}   \longrightarrow 
\|\bg\|\ci{L^2(W^{-1})} =1, 
\end{align*}
so we got the desired lower bound \eqref{e:biest Hilb 02}  (with $\wt \bff$, $\wt\bg$ being the 
functions $\qper^{\vec{N}} \bff$ and 
$\qper^{\vec{N}} \bg$ normalized  in $L^2(W)$ and $L^2(W^{-1})$ respectively).
\end{proof}

\section{Tracking the \tp{$\bA_2$}{A2} constant}

In the above sections we constructed (for a sufficiently large $\bcQ$) the weight $\wt W$ on 
$I^0:=[0,1)$, $[\wt W]\ci{\bA_2}\ut{dy}\le\bcQ$, and a function $\bff\in L^2(\wt W)$ (supported on 
$I^0$),  $\|f\|\ci{L^2(W)}\ne0$  such 
that 
\begin{align*}
\| \Hdy \bff\|\ci{L^2(\wt W)} \gtrsim \bcQ^{3/2} \|f\|\ci{L^2(\wt W)}. 
\end{align*}
Extending this weight periodically to the real line, and slightly abusing notation using the same 
symbol for it, it is trivial that the above inequality holds for the same function $\bff$ 
(supported on $I^0$, not extended periodically). 

It is trivial that after extending periodically, the weight $\wt W$ still satisfies the dyadic 
$\bA_2$ 
condition and that $[\wt W]\ci{\bA_2}\ut{dy}\le\bcQ$. Here by the dyadic $\bA_2$ condition we mean 
the condition \eqref{e:A2 01} with $\cD$ being the collection of all dyadic subintervals of $\R$, 
not just $\cD(I^0)$. 

In the rest of this section we will follow this agreement and use $\cD$ for the collection of all 
dyadic subintervals of $\R$. 

We claim that  our weight $\wt W$ (periodically extended)  satisfies, in fact, the standard $\bA_2$ 
condition, and that $[\wt W]\ci{\bA_2} \le 16^2 \bcQ$. 

\subsection{The strong dyadic \tp{$\bA_2$}{A2} condition}
We will need the following definition. 

\begin{df}
We say that a weight $W$ satisfies the \emph{strong dyadic} $\bA_2$ condition, denoted 
$W\in\bA_2\ut{sd}$, if the estimate \eqref{e:A2} is satisfied for all intervals $I$ that are a 
union of 2 adjacent dyadic intervals of equal length. The corresponding supremum, the strong dyadic 
$\bA_2$ 
characteristic, will be denoted as $[W]\ci{\bA_2}\ut{sd}$.
\end{df}
Note that any dyadic interval is a union of its children, so
trivially $[W]\ci{\bA_2}\ut{dy}\le [W]\ci{\bA_2}\ut{sd}\le [W]\ci{\bA_2}$. The lemma below shows 
that the classes $\bA_2$ and $\bA_2\ut{sd}$ coincide, and that the corresponding characteristics 
are equivalent.

\begin{lm}\label{l:strong dyadic}
For a matrix weight $W$, 
\begin{align*}
[W]\ci{\bA_2} \le 16 [W]\ci{\bA_2}\ut{sd} .
\end{align*}
\end{lm}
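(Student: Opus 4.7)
The plan is to reduce the $\bA_2$ characteristic over an arbitrary interval $I \subset \R$ to the strong dyadic $\bA_2$ characteristic via a covering argument: I want to find a strong dyadic interval $J \supset I$ of comparable length, and then use monotonicity of averages together with the LMI reformulation of $\bA_2$ from Lemma \ref{l:A2_LMI}.

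First I would establish the covering fact: for any interval $I \subset \R$ there exists a strong dyadic interval $J \supset I$ with $|J| \le 4|I|$. Indeed, let $k\in \Z$ be minimal with $|I| \le 2^k$, so $2^{k-1} < |I|$. The interval $I$ meets at most two dyadic intervals of length $2^k$. If it lies in one such dyadic interval $J_0$, then $J_0$ itself is the union of its two dyadic children (adjacent, equal length), hence strong dyadic, with $|J_0| = 2^k < 2|I|$; if $I$ spans two adjacent dyadic intervals $J_1, J_2$ of length $2^k$, then $J := J_1 \cup J_2$ is strong dyadic with $|J| = 2^{k+1} < 4|I|$.

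Next, from $I \subset J$ and $|J| \le 4|I|$ we get the pointwise monotonicity of averages for the positive-definite matrix-valued functions $W$ and $W^{-1}$:
\begin{align*}
\la W\ra\ci{I} \le \frac{|J|}{|I|}\la W\ra\ci{J} \le 4 \la W\ra\ci{J}, \qquad
\la W^{-1}\ra\ci{I} \le 4 \la W^{-1}\ra\ci{J}.
\end{align*}
The first inequality implies $\la W\ra\ci{J}^{-1} \le 4 \la W\ra\ci{I}^{-1}$ by operator monotonicity of inversion on positive matrices. Combining these with the strong dyadic $\bA_2$ assumption, written in the LMI form of Lemma \ref{l:A2_LMI} as $\la W^{-1}\ra\ci{J} \le [W]\ci{\bA_2}\ut{sd}\, \la W\ra\ci{J}^{-1}$, gives
\begin{align*}
\la W^{-1}\ra\ci{I} \le 4\la W^{-1}\ra\ci{J} \le 4[W]\ci{\bA_2}\ut{sd}\, \la W\ra\ci{J}^{-1} \le 16 [W]\ci{\bA_2}\ut{sd}\, \la W\ra\ci{I}^{-1}.
\end{align*}
Applying Lemma \ref{l:A2_LMI} in the opposite direction yields $\|\la W\ra\ci{I}^{1/2}\la W^{-1}\ra\ci{I}^{1/2}\|^2 \le 16[W]\ci{\bA_2}\ut{sd}$; taking the supremum over all intervals $I$ gives the claim.

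There is no real obstacle here — the argument is elementary once one has the LMI characterization of $\bA_2$ (already recorded in Lemma \ref{l:A2_LMI}) and the covering observation. The only point that requires any care is checking that a dyadic interval itself qualifies as a strong dyadic interval (via its two children), so that the case $I \subset J_0$ with $J_0$ a single dyadic interval is covered by the hypothesis.
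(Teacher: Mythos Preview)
Your proof is correct and follows essentially the same approach as the paper's: cover an arbitrary interval by a strong dyadic interval of at most four times its length, use monotonicity of averages to get a factor of $4$ on each side, and chain through the LMI form of the $\bA_2$ condition (Lemma~\ref{l:A2_LMI}) to pick up $16[W]\ci{\bA_2}\ut{sd}$. The only cosmetic differences are that you swap the roles of the labels $I$ and $J$ relative to the paper, and your covering sometimes lands on a single dyadic interval (viewed as the union of its children) whereas the paper always picks two adjacent dyadic intervals of the same scale; both routes give the same bound $|J|\le 4|I|$.
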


\begin{proof}
Take an arbitrary interval $J$ (not necessarily dyadic), and let $I_1$ be a dyadic interval such 
that
\begin{align*}
|J|\le |I_1|\le 2 |J|, \qquad I_1\cap J\ne \varnothing .   
\end{align*}
 We can find a dyadic interval $I_2$ adjacent to $I_1$, with 
$|I_2|=|I_1|$ such that $J\subset I_1\cup I_2=: I$. Trivially $|I| \le 4 |J|$, so 
\begin{align}\label{e:A2 02}
\La W \Ra\ci{\!J} &\le |J|^{-1}\int_I W(x) \dd x \le  4 \La W \Ra\ci{I} , \qquad \\
\label{e:A2 03}
\La W^{-1} \Ra\ci{\!J} &\le 4 \La W^{-1} \Ra\ci{I} . 
\intertext{The  inequality \eqref{e:A2 03} is equivalent to}
\label{e:A2 04}
\La W^{-1} \Ra\ci{I}^{-1} &\le 4 \La W^{-1} \Ra\ci{\!J}^{-1}, 
\end{align}
so using Lemma \ref{l:A2_LMI} 
we can write
\begin{align*}
\La W \Ra\ci{\!J} \le 4 \La W \Ra\ci{I} \le 4 [W]\ci{\bA_2}\ut{sd} \La W^{-1} \Ra\ci{I}^{-1} 
\le 16 [W]\ci{\bA_2}\ut{sd} \La W^{-1} \Ra\ci{J}^{-1} \,;
\end{align*}
the second inequality here follows from Lemma \ref{l:A2_LMI}, the last one from  
\eqref{e:A2 04}.

Applying Lemma \ref{l:A2_LMI} again we get the conclusion of the lemma. 
\end{proof}

Below we will prove that $[\wt W]\ci{\bA_2}\ut{sd}\le 16 \bcQ$,  which by Lemma \ref{l:strong 
dyadic} gives us 
that $[\wt W]\ci{\bA_2}\le 16^2 \bcQ$. 

\subsection{Procedure \ref{proc:QP step} and strong dyadic \tp{$\bA_2$}{A2}} Let us recall, see 
Remark \ref{r:remod}, that the remodeled weight $\wt W$ was obtained from the dyadic weight $W$ by 
starting with the dyadic weight $\bE_1 W :=\La W \Ra\ci{I^0_+}\1\ci{I^0_+} + \La W 
\Ra\ci{I^0_-}\1\ci{I^0_-}$, applying inductively Procedure \ref{proc:QP step} and taking the limit.

Note, that the inverse $\wt W^{-1}$ of the remodeled weight is obtained exactly the same way from 
the weight $W^{-1}$: start with the weight $\bE_1 (W^{-1})$ and apply Procedure \ref{proc:QP step} 
exactly the same way. 

That was discussed for the remodeling of a weight on the interval $I^0$, but the same can be said 
about the periodic extension of the remodeled weight. Namely, this periodic extension can be 
obtained from taking the weight $\bE_1 W$, extending it periodically, then  applying steps 
described in Procedure \ref{proc:QP step} to starting intervals and taking the limit.  

To see that it can be done, take some numbering  $I_k$, $k\in\N$ of the dyadic intervals of length 
$1$, and start with the periodic extension of the weight $\bE_1 W$. Let us assume, without loss of 
generality, that $I_1=I^0$, and let $I_{1,k}$ be the (ordered) starting subintervals of $I_1$ to 
which we consecutively apply  Procedure \ref{proc:QP step} to get the remodeled weight $\wt W$ on 
the interval $I_1$. 

Let $I_{j,k} $ be the translation of the starting interval $I_{1,k}$ to the interval $I_j$. 
Performing Procedure \ref{proc:QP step} consecutively on intervals $I_{1,1}$, $I_{1,2}$, $I_{2,1}$, 
$I_{1,3}$,   $I_{2,2}$, $I_{3,1}$, $I_{1,4}$ etc, gives us the periodic extension of the remodeled 
weight.

Note that the inverse $\wt W^{-1}$ of the (periodically extended) remodeled weight is obtained 
\emph{exactly} the same way  from the weight $V=W^{-1}$.

Since we will be dealing with the averages of $W$ and $V=W^{-1}$,  we will need an extra 
definition. 

\begin{df}
\label{d:Strong A2 pair}
We say that a pair of weights $V$, $W$ satisfies the strong dyadic $\bA_2$ condition (and write 
$(V,W)\in \bA_2\ut{sd}$) if 
\begin{align*}
\sup_{I} \| \La V\Ra\ci{I}^{1/2}  \La W \Ra\ci{I}^{1/2} \|^2 := [V,W]\ci{\bA_2}\ut{sd} <\infty, 
\end{align*}
where the supremum is taken over all intervals $I$ that are unions of two adjecent dyadic intervals 
of equal length. 

The quantity $[V,W]\ci{\bA_2}\ut{sd}$ is called the \emph{strong dyadic $\bA_2$ characteristic} of 
the pair $V$, $W$. 
\end{df}

The next lemma is crucial for controlling the $\bA_2$ characteristic. 

\begin{lm}\label{l:track A2sd}
Let $V$, $W$ be matrix weights on $\R$ such that $[V,W]\ci{\bA_2}\ut{sd}\le 16\bcQ$. Assuming that 
$V$ and $W$ are constant on an interval $J\in\cD$, define new weights $V_1$, $W_1$ as 
\begin{align*}
V_1 := V + \qper\ci{J}^N     \Delta\ci{J}^2\Psi , \qquad 
W_1 := W + \qper\ci{J}^{N}  \Delta\ci{J}^2\Phi, \qquad N\ge 2, 
\end{align*}
where $\Psi=\Psi^*$, $\Phi=\Phi^*$ are some matrix-valued functions%
\footnote{Here we are implicitly assuming $V_1$, $W_1$ are weights, i.e.\ that $\La V \Ra\ci{J} + 
\Delta\ci{J}^2 \Psi\ge 0$,  $\La W \Ra\ci{J} + 
\Delta\ci{J}^2 \Phi\ge 0$ on $J$.}%
. 

If  

\begin{align}\label{e: A2 on J}
\| \La V\Ra\ci{\!J}^{1/2}  \La W \Ra\ci{\!J}^{1/2} \|^2 \le \bcQ, 
\end{align}
 then 
\begin{align*}
[V_1,W_1]\ci{\bA_2}\ut{sd} \le 16 \bcQ.
\end{align*}
\end{lm}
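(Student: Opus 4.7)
The plan is to verify $\|\la V_1\ra\ci I^{1/2}\la W_1\ra\ci I^{1/2}\|^2 \le 16\bcQ$ on every $I = I_1\cup I_2$ from the class defining strong dyadic $\bA_2$, by a case analysis based on how $I$ sits relative to $J$. The central lever is a mean-zero property of the quasi-periodization.

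First I would record the \emph{mean-zero observation}: $\qper\ci{J}^N\Delta\ci{J}^2\Psi$ has zero average on every stopping subinterval $K$ of $J$, and hence on $J$ itself and on every union of stopping intervals. Indeed, on a regular stopping $K\in\cR(J)$ the function equals $(\Delta\ci{J}^2\Psi)\circ\psi\ci{J,K}$, whose mean equals $\la\Delta\ci{J}^2\Psi\ra\ci J=0$; on an exceptional $K\in\cE(J)$ it is the constant $\la\Delta\ci{J}^2\Psi\ra\ci J=0$. The analogous statement holds for $\Phi$. Consequently, on any dyadic $I'$ that misses $J$, equals $J$, is a union of complete stopping intervals of $J$, or lies inside an exceptional stopping interval (where $V_1\equiv \la V\ra\ci J = V$ is constant), we have $\la V_1\ra\ci{I'}=\la V\ra\ci{I'}$ and $\la W_1\ra\ci{I'}=\la W\ra\ci{I'}$.

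The easy cases follow at once: whenever both $I_1$ and $I_2$ fall into one of the benign categories above, the averages of $V_1,W_1$ on $I$ coincide with those of $V,W$, so $[V,W]\ci{\bA_2}\ut{sd}\le 16\bcQ$ delivers the bound; in the subcase $I\subseteq J$ both averages collapse to $\la V\ra\ci J,\la W\ra\ci J$, and \eqref{e: A2 on J} even gives the sharper bound $\le\bcQ$.

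The only remaining case is when $I\subsetneq K$ for a single regular stopping $K\in\cR(J)$ (a straddling subcase where $I_1\subset K$ touches the boundary of $K$ and $I_2$ lies in a neighboring stopping interval is handled in the same way, since the relevant averages of $V_1,W_1$ reduce to constant values of the template on the extreme grandchildren of $J$). On $K$, $V_1|_K$ is the affine pullback via $\psi\ci{J,K}$ of the four-step template $T_V:=\la V\ra\ci J+\Delta\ci{J}^2\Psi$ on $J$, and likewise $W_1|_K$ is the pullback of $T_W:=\la W\ra\ci J+\Delta\ci{J}^2\Phi$; these templates are piecewise constant on the four grandchildren of $J$. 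Writing $\hat I:=\psi\ci{J,K}(I)\subset J$, itself a union of two adjacent equal-length dyadic intervals, the required bound reduces to $\|\la T_V\ra\ci{\hat I}^{1/2}\la T_W\ra\ci{\hat I}^{1/2}\|^2\le 16\bcQ$. The main obstacle is this last template estimate: it must hold for every union $\hat I$ of two adjacent equal-length dyadic subintervals of $J$ (including single grandchildren and unions of two adjacent grandchildren). In the remodeling context where the lemma is invoked, $\Psi$ and $\Phi$ are affine rescalings of the original matrix weight and its inverse, with $\la\Psi\ra\ci J=\la V\ra\ci J$ and $\la\Phi\ra\ci J=\la W\ra\ci J$; the template estimate then collapses to $\|\la\Psi\ra\ci{\hat I}^{1/2}\la\Phi\ra\ci{\hat I}^{1/2}\|^2\le \bcQ$ for $\hat I$ a union of two adjacent equal-length dyadic subintervals of $J$, which is precisely the strong dyadic $\bA_2$ condition inherited from the underlying $\bA_2$ weight. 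Combining with the easy cases completes the proof.
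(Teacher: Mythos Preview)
Your case analysis and mean-zero observation are fine, and your reduction of the hard case to a ``template estimate'' on $\hat I\subset J$ is correct in spirit. But the proof breaks exactly where you say ``the main obstacle is this last template estimate'': you do not prove it from the hypotheses of the lemma. Instead you appeal to the remodeling context, claiming that $\Psi,\Phi$ inherit a strong dyadic $\bA_2$ bound from ``the underlying $\bA_2$ weight.'' This is not admissible: the lemma as stated assumes nothing about $\Psi,\Phi$ beyond Hermitian symmetry and the positivity in the footnote. Worse, even in the intended application your appeal is wrong: the original weight $W$ is only a \emph{dyadic} $\bA_2$ weight, so you have no control on averages over a union of two adjacent grandchildren of $J$ that is not itself dyadic.

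The missing idea is the doubling trick the paper uses, and it bypasses any $\bA_2$ information on subintervals of $J$. Write $T_V:=\la V\ra\ci J+\Delta\ci J^2\Psi$ and $T_W:=\la W\ra\ci J+\Delta\ci J^2\Phi$; these are nonnegative with $\la T_V\ra\ci J=\la V\ra\ci J$ and $\la T_W\ra\ci J=\la W\ra\ci J$. For every $I'\in\ch^2(J)$ one has trivially $\la T_V\ra\ci{I'}\le 4\la T_V\ra\ci J=4\la V\ra\ci J$ and likewise for $T_W$. Now observe that for any $I\subsetneq J$ of the required form, $\la V_1\ra\ci I$ and $\la W_1\ra\ci I$ are convex combinations, with the \emph{same} coefficients, of the values $\la T_V\ra\ci{I'}$ and $\la T_W\ra\ci{I'}$ over $I'\in\ch^2(J)$ (the exceptional stopping intervals contribute the average $\la V\ra\ci J$, itself such a combination). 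Hence $\la V_1\ra\ci I\le 4\la V\ra\ci J$ and $\la W_1\ra\ci I\le 4\la W\ra\ci J$. The single hypothesis \eqref{e: A2 on J}, via Lemma~\ref{l:A2_LMI_prelim}, gives $\la V\ra\ci J\le\bcQ\,\la W\ra\ci J^{-1}$, and operator monotonicity of the inverse turns $\la W_1\ra\ci I\le 4\la W\ra\ci J$ into $\la W\ra\ci J^{-1}\le 4\la W_1\ra\ci I^{-1}$. Chaining these yields $\la V_1\ra\ci I\le 16\bcQ\,\la W_1\ra\ci I^{-1}$, which is exactly the bound you need---with no assumption on $\Psi,\Phi$ beyond positivity of the templates.
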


This lemma implies that $[\wt W]\ci{\bA_2}\ut{sd}\le 16 \bcQ$, which is exactly what we need.

\begin{proof}[Proof of Lemma \ref{l:track A2sd}]
Let $I_1$, $I_2$ be two adjacent dyadic intervals of equal length, and let $I:=I_1\cup I_2$. We 
want to show that
\begin{align}\label{e:A_2 strong pair 01}
\| \La V_1\Ra\ci{I}^{1/2}  \La W_1\Ra\ci{I}^{1/2} \|^2 \le 16\bcQ.
\end{align}
First, notice that if $J\cap I=\varnothing$ or $J\subset I$, then trivially 
\begin{align}\label{e:equal averages}
\La V_1 \Ra\ci{I} = \La V \Ra\ci{I}, \qquad \La W_1 \Ra\ci{I} = \La W \Ra\ci{I}, 
\end{align}
so \eqref{e:A_2 strong pair 01} holds (because $[V,W]\ci{\bA_2}\ut{sd}\le 16\bcQ$). 

It remains to consider the case $I\cap J\ne \varnothing$, but $J\not\subset I$. There are two 
possibilities here:
\begin{enumerate}
\item $J\not\subset I$ and $I\not\subset J$;
\item $I\subsetneq J$. 
\end{enumerate}

In the first case one of the intervals (say $I_1$) does not intersect $J$, but touches its 
boundary, and the other one ($I_2$) is strictly inside of $J$ and touches the boundary of $J$ at 
the same point. 

In this case  the identity \eqref{e:equal averages} holds for $I_2$, see Lemma \ref{l:av QP} below. 
For $I_1$ it holds trivially (the weights do not change), so \eqref{e:equal averages} holds for 
$I=I_1\cup I_2$.

Therefore,  \eqref{e:A_2 strong pair 01} holds for $I$ because 
$[V,W]\ci{\bA_2}\ut{sd}\le 16\bcQ$. 

Finally, let us treat the case \cond2, $I\subsetneq J$. 
 Writing $J$ as a disjoint union
\begin{align*}
J= \bigcup_{k=1}^{2^{N+2}} J_k', \qquad J_k'\in \ch^{N+1}(J), 
\end{align*}
we can see that the weights $V_1$, $W_1$ are constant on the intervals $J_k'$. 

Defining the auxiliary weights 
\begin{align*}
\wt V:=\La V \Ra\ci{J} + \Delta\ci{J}^2 \Psi, \qquad
\wt W:=\La W \Ra\ci{J} + \Delta\ci{J}^2 \Phi,
\end{align*}
we can see that for any $J_k'\in \ch^{N+2}(J) $ there exists $I'\in\ch^2(J)\cup \{J\}$ such that
\begin{align*}
\La V_1 \Ra\ci{J_k'} = \La \wt V \Ra\ci{I'}\,, \qquad \La W_1 \Ra\ci{J_k'} = \La \wt W \Ra\ci{I'}; 
\end{align*}
(the case $I'=J$ happens when $J_k'$ is a subset of an exceptional stopping interval). 

Since for any $\vf\in L^1(J)$
\begin{align*}
\La \vf \Ra\ci{J} = \frac14\sum_{I'\in\ch^2(J)} \La \vf \Ra\ci{I'}, 
\end{align*}
we can see that for any $J_k'\in \ch^{N+2}(J)$ either 
\begin{align}\label{e:aver 03}
\La V_1 \Ra\ci{J_k'} &= \La \wt V \Ra\ci{I'}, & \La W_1 \Ra\ci{J_k'} &= \La \wt W \Ra\ci{I'} 
\quad \text{for some } I'\in\ch^2(J)
\intertext{or}\label{e:aver 04}
\La V_1 \Ra\ci{J_k'} &= \La \wt V \Ra\ci{J}=\frac14 \sum_{I'\in\ch^2(J)} \La \wt V \Ra\ci{I'}, &
\La W_1 \Ra\ci{J_k'} &= \La \wt W \Ra\ci{J}=\frac14 \sum_{I'\in\ch^2(J)} \La \wt W \Ra\ci{I'}\,.
\end{align}

Let again $n\in\N$ be such that $I\in\ch^n(J)$. If $n\ge N+2$ then since $V_1$, $W_1$ are constant 
on intervals $J_k'\in \ch^{N+2}(J)$, we have 
\begin{align*}
\La V_1 \Ra\ci{I} = \La V_1 \Ra\ci{J_k'}, \qquad \text{where\ } I\subset J_k'\in\ch^{N+2}(J). 
\end{align*}
If $n<N+2$, then $I$ is a disjoint union  of some $J_k'\in\ch^{N+2}(J)$, so $\La V_1 \Ra\ci{I}$, 
$\La W_1 \Ra\ci{I}$  are the averages of  $\La V_1\Ra\ci{J_k'}$, $\La W_1\Ra\ci{J_k'}$ over these 
$J_k'$. Combining this with \eqref{e:aver 03}, \eqref{e:aver 04} we get that in any case 
\begin{align}\label{e:aver 05}
\La V_1\Ra\ci{I} &= \sum_{I'\in \ch^2(J)} \alpha\ci{I'} \La \wt V \Ra\ci{I'}, \qquad
\La W_1\Ra\ci{I} = \sum_{I'\in \ch^2(J)} \alpha\ci{I'} \La \wt W \Ra\ci{I'},
\intertext{where}\notag
\alpha\ci{I'} &\ge 0, \qquad \sum_{I'\in \ch^2(J)} \alpha\ci{I'} =1. 
\end{align}

 But for any $I'\in \ch^2(J)$
\begin{align*}
\La \wt V \Ra\ci{I'}\le  4 \La \wt V \Ra\ci{J} =4 \La  V \Ra\ci{J}\,, \qquad 
\La \wt W \Ra\ci{I'}\le  4 \La \wt W \Ra\ci{J} =4 \La  W \Ra\ci{J}  
\end{align*}
so  
\begin{align*}
\La V_1 \Ra\ci{I}\le  4 \La  V \Ra\ci{J}\,, \qquad 
\La W_1 \Ra\ci{I} &\le  4 \La  W \Ra\ci{J}. 
\intertext{The second inequality is equivalent to}
\La  W \Ra\ci{J}^{-1} &\le 4 \La W_1 \Ra\ci{I}^{-1}, 
\end{align*}
so we can write
\begin{align*}
\La V_1 \Ra\ci{I}\le  4 \La  V \Ra\ci{J}\le 4\bcQ \La  W \Ra\ci{J}^{-1} 
\le 16\bcQ \La W_1 \Ra\ci{I}^{-1};
\end{align*}
here in the second inequality we used Lemma \ref{l:A2_LMI_prelim}. 

Applying Lemma \ref{l:A2_LMI_prelim} to the resulting inequality $\La V_1 \Ra\ci{I} \le 16\bcQ \La 
W_1 \Ra\ci{I}^{-1}$ we get the desired estimate \eqref{e:A_2 strong pair 01}.   
\end{proof}

The following lemma applies to arbitrary $L^1$ functions, not only to weights. 
\begin{lm}\label{l:av QP}
Let $I$ be a dyadic subinterval of an interval $J$, touching its boundary, and let $\vf\in 
L^1(J)$. 
Then for any $N\ge 2$
\begin{align*}
\La \qper\ci{J}^N \vf \Ra\ci{I} = \La \vf \Ra\ci{J}. 
\end{align*}
\end{lm}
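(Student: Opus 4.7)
The plan is to split into two cases based on whether $I$ is at least as large as one of the stopping subintervals in $\ch^N(J)$, or strictly smaller. The key preliminary observation, following from the fact that $\per\ci{J}^N$ is composition with a measure-preserving map (so each stopping subinterval $I'\in\ch^N(J)$ carries a rescaled copy of $\vf$), is that $\La \per\ci{J}^N \vf \Ra\ci{I'} = \La \vf \Ra\ci{J}$ for every such $I'$. Since by definition $\qper\ci{J}^N \vf = \per\ci{J}^N \vf$ on each regular stopping subinterval and $\qper\ci{J}^N \vf \equiv \La \vf \Ra\ci{J}$ on each exceptional stopping subinterval, we obtain the unified statement
\[
\La \qper\ci{J}^N \vf \Ra\ci{I'} = \La \vf \Ra\ci{J} \qquad \forall\, I' \in \ch^N(J).
\]

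In the first case, $|I|\ge 2^{-N}|J|$, so $I$ is a disjoint union of some collection of stopping subintervals $\{I'_j\}\subset \ch^N(J)$. Averaging,
\[
\La \qper\ci{J}^N\vf \Ra\ci{I} = \frac{1}{|I|}\sum_j |I'_j|\,\La \qper\ci{J}^N\vf \Ra\ci{I'_j} = \La \vf \Ra\ci{J},
\]
which is the desired identity (note this case uses neither the touching hypothesis nor $N\ge 2$).

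In the second case, $|I|< 2^{-N}|J|$, so $I$ is contained in a unique $I'\in \ch^N(J)$. Here the touching hypothesis is decisive: since $I$ touches $\partial J$ and $I\subset I'\subset J$, the interval $I'$ must also touch $\partial J$, hence $I'$ is exceptional. Consequently $\qper\ci{J}^N\vf$ is the constant $\La\vf\Ra\ci{J}$ on $I'$, and in particular on $I$, giving $\La \qper\ci{J}^N\vf \Ra\ci{I} = \La \vf \Ra\ci{J}$. The assumption $N\ge 2$ is just what guarantees the two exceptional stopping subintervals are disjoint and the construction is non-degenerate. There is no real obstacle; the whole argument is bookkeeping around the facts that (i) $\qper\ci{J}^N$ differs from $\per\ci{J}^N$ only on the two boundary stopping subintervals, and (ii) on those two intervals the replacement value equals the common average $\La \vf \Ra\ci{J}$ that $\per\ci{J}^N\vf$ already had on every stopping subinterval.
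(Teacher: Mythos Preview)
Your proof is correct and follows essentially the same approach as the paper's: both split into the cases $|I|\ge 2^{-N}|J|$ (where $I$ is a union of stopping subintervals, each having average $\La\vf\Ra\ci{J}$) and $|I|<2^{-N}|J|$ (where the touching hypothesis forces $I$ to lie inside an exceptional stopping subinterval on which $\qper\ci{J}^N\vf$ is the constant $\La\vf\Ra\ci{J}$). Your explicit preliminary observation that $\La\qper\ci{J}^N\vf\Ra\ci{I'}=\La\vf\Ra\ci{J}$ for every $I'\in\ch^N(J)$ is exactly what the paper invokes as ``by the construction of $\qper\ci{J}^N\vf$''.
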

\begin{proof}
The case $I=J$ is trivial, so we assume that $I\subsetneq J$, so $I$ touches the boundary of $J$ at 
one point. Let $J_0\in\ch^N(J)$ touch the boundary of $J$ at the same point as $I$. 

Let $n$ be such that $I\in\ch^n(J)$. If $n\ge N$, then $I\subset J_0$, and since $\qper\ci{J}^N 
\vf$ 
is constant on $J_0$ we conclude that 
\begin{align*}
\La \qper\ci{J}^N \vf \Ra\ci{I} = \La \qper\ci{J}^N \vf \Ra\ci{J_0} = \La  \vf \Ra\ci{J};
\end{align*}
the last equality holds by the construction of $\qper\ci{J}^N \vf$. 

If $n<N$ then $I$ can be represented as a disjoint union
\begin{align*}
I = \bigcup_{k=0}^m J_k, \qquad J_k\in\ch^N(J). 
\end{align*}
By the construction of $\qper\ci{J}^N \vf$ 
\[
\La \qper\ci{J}^N \vf \Ra\ci{J_k} = \La  \vf \Ra\ci{J}, 
\]
and we immediately get the conclusion of the lemma. 
\end{proof}

We used this Lemma in the proof of Lemma \ref{l:track A2sd}, which is now completely proved. Tracking  the strong dyadic $\mathbf{A}_2$ characteristic of the remodeled weight $\tilde W$ is accomplished. This  finishes the construction of our counterexample to the matrix $\mathbf{A}_2$ conjecture.

\end{document}